\documentclass[article,12pt]{amsart}
\usepackage{amsmath,amsthm,amssymb,amscd}
\usepackage[all,cmtip,color]{xy}
\usepackage[german,english]{babel}
\usepackage{amsmath}
\usepackage{amssymb}
\usepackage{mathrsfs} 
\usepackage{bbm}
\usepackage{color}
\usepackage{dsfont}
\usepackage{todonotes}
\usepackage{url}

\usepackage[all,cmtip]{xy}
\usepackage[margin = 2.8cm,headsep=1cm]{geometry}
\usepackage{colonequals,enumerate}

\DeclareMathOperator{\Hom}{\mathsf{Hom}}

\DeclareMathOperator{\codim}{codim}

\def\llp{\mathopen{(\!(}}
\def\llb{\mathopen{[\![}}

\def\rrp{\mathopen{)\!)}}
\def\rrb{\mathopen{]\!]}}

\DeclareMathOperator{\Oc}{\mathcal{O}}

\DeclareMathOperator{\colim}{\mathsf{colim}}
\renewcommand{\lim}{\mathsf{lim}}

\DeclareMathOperator{\res}{res}

\DeclareMathOperator{\id}{id}

\DeclareMathOperator{\Set}{Set}

\DeclareMathOperator{\Lie}{Lie}

\DeclareMathOperator{\Gal}{Gal}
\DeclareMathOperator{\Aut}{\mathsf{Aut}}

\DeclareMathOperator{\GL}{GL}

\DeclareMathOperator{\SL}{SL}
\DeclareMathOperator{\M}{M}

\DeclareMathOperator{\Spec}{\mathsf{Spec}}

\DeclareMathOperator{\Sym}{Sym}

\newcommand{\BA}{{\mathbb{A}}}

\newcommand{\BC}{{\mathbb{C}}}

\newcommand{\BG}{{\mathbb{G}}}

\newcommand{\BL}{{\mathbb{L}}}
\newcommand{\BM}{{\mathbb{M}}}
\newcommand{\BN}{{\mathbb{N}}}

\newcommand{\BP}{{\mathbb{P}}}
\newcommand{\BQ}{{\mathbb{Q}}}
\newcommand{\BR}{{\mathbb{R}}}

\newcommand{\BX}{{\mathbb{X}}}

\newcommand{\BZ}{{\mathbb{Z}}}

\newcommand{\FA}{{\mathcal A}}
\newcommand{\FB}{{\mathcal B}}
\newcommand{\FC}{{\mathcal C}}
\newcommand{\FE}{{\mathcal E}}

\newcommand{\FI}{{\mathcal I}}

\newcommand{\FL}{{\mathcal L}}
\newcommand{\FM}{{\mathcal M}}
\newcommand{\FN}{{\mathcal N}}
\newcommand{\FO}{{\mathcal O}} 
\newcommand{\FP}{{\mathcal P}}
\newcommand{\FQ}{{\mathcal Q}}

\newcommand{\FS}{{\mathcal S}}

\newcommand{\FV}{{\mathcal V}}

\DeclareMathOperator{\Cent}{Cent}

\newcommand{\Var}{\mathrm{Var}}
\newcommand{\cM}{\mathcal{M}}

\newcommand{\Ko}{\mathrm{K}_0}

\renewcommand{\M}{\mathrm{M}}

\newcommand{\K}[1]{\mathrm{\bf K}(#1)}

\newcommand{\mcL}{\mathcal{L}}

\newcommand{\mcC}{\mathcal{C}}

\newcommand{\mcM}{\mathcal{M}}

\newcommand{\set}[1]{\left\{ #1 \right\}}

\newcommand{\abs}[1]{\left\lvert#1\right\rvert}
\newcommand{\eL}{\mathbb{L}}

\newcommand{\Gm}{{\mathbb{G}_\mathrm{m}}}
\newcommand{\un}{\mathds{1}}

\newcommand{\PSF}{\mathrm{PSF}}

\newcommand{\ACVF}{\mathrm{ACVF}}
\newcommand{\ACF}{\mathrm{ACF}}
\newcommand{\VF}{\mathrm{VF}}
\newcommand{\Vol}{\mathrm{Vol}}

\newcommand{\val}{\mathrm{val}}
\newcommand{\oST}{\overline{\mathrm{ST}}}

\newcommand{\RV}{\mathrm{RV}}
\newcommand{\RES}{\mathrm{RES}}
\newcommand{\kk}{\mathbf{k}}
\newcommand{\rv}{\mathrm{rv}}
\newcommand{\valrv}{\val_{\rv}}
\newcommand{\Gam}{\Gamma}
\newcommand{\Isp}{\mathrm{Isp}}
\newcommand{\eu}{\mathrm{eu}}
\newcommand{\an}{\mathrm{an}}
\newcommand{\Ext}{\mathrm{Ext}}

\newtheorem{theorem}[subsubsection]{Theorem}
\newtheorem{proposition}[subsubsection]{Proposition}

\newtheorem{corollary}[subsubsection]{Corollary}

\newtheorem{lemma}[subsubsection]{Lemma}
\newtheorem{assumption}[subsubsection]{Assumption}
\theoremstyle{definition}
\newtheorem{definition}[subsubsection]{Definition}

\newtheorem{rmk}[subsubsection]{Remark}

\newtheorem{example}[subsubsection]{Example}

\numberwithin{equation}{subsection}

\begin{document}

\author{Arthur Forey}
\address{Univ. Lille, CNRS, UMR 8524-Laboratoire Paul Painlevé, F-59000 Lille, France} 
  \email{arthur.forey@univ-lille.fr}
\urladdr{https://pro.univ-lille.fr/arthur-forey/}

\author{Fran\c {c}ois Loeser}
\address{Sorbonne Universit\'e, Institut de Math\'ematiques de Jussieu-Paris
Rive Gauche, CNRS. Campus Pierre et Marie Curie, case 247, 4 place Jussieu, 75252 Paris cedex 5, France.
}
\email{francois.loeser@imj-prg.fr}
\urladdr{https://webusers.imj-prg.fr/$\sim$francois.loeser/}

\author{Dimitri Wyss}
\address{EPFL/SB/ARG, Station 8, CH-1015 Lausanne, Switzerland
}
\email{dimitri.wyss@epfl.ch}
\urladdr{https://people.epfl.ch/dimitri.wyss}

\title{An orbifold formula for algebraic stacks}

\begin{abstract} We study motivic integration on varieties that are birational to  smooth algebraic stacks and prove a generalization of the orbifold formula for varieties with finite quotient singularities. This leads in particular to a new way of computing stringy $E$-functions and we also give applications to the study of klt-singularities. 

Along the way we study Cluckers-Loeser motivic integration on algebraic stacks, which might be of independent interest. The proof relies furthermore on ideas from Hrushovski-Kazhdan integration and Bruhat-Tits theory. 
\end{abstract}

\maketitle 

\tableofcontents

\section{Introduction}

\subsection{}

For $G$ a finite subgroup
of $\SL_2 (\BC)$, McKay established in 
\cite{mckay} an explicit correspondence between non-trivial irreducible representations of $G$ and exceptional divisors of the minimal resolution of the rational double point singularity
$\BC^2/G$. More generally, for $G$ a finite group
of automorphisms of a smooth complex algebraic variety $M$, the study of the connections between the geometry of resolutions of the quotient $M / G$ and the $G$-equivariant geometry of $M$ is what is nowadays known under the umbrella appellation of  ``McKay correspondence".
As an illustration of this paradigm, 
Batyrev proved in \cite{batyrev_jems}
 a formula expressing the stringy $E$-polynomial of the quotient in terms of the orbifold $E$-polynomial which is defined in terms of the group action.
Slightly later, a different approach of this result was given in \cite{DL2002} using what is now called the orbifold volume formula.
Both \cite{batyrev_jems} and  \cite{DL2002} use the theory of motivic integration  developed in  \cite{DL99}, which extends the theory
of motivic integration on smooth varieties, originally  introduced by Kontsevich in \cite{Ko95} in order to prove that two birational Calabi-Yau varieties have the same Hodge numbers, to singular varieties.
Yasuda later developed in \cite{Ya06} a theory of motivic integration for smooth Deligne-Mumford stacks which appears to provide  a very convenient framework for formulating the most general version of the orbifold volume formula. It can be stated as follows (in a formulation somewhat stronger than  the  ones in \cite{DL2002} and  \cite{Ya06}, where different Grothendieck rings were used).

Let $\FM$ be a smooth Deligne-Mumford stack over an algebraically closed field $k$ of characteristic $0$ and $\pi: \FM \to M$ its coarse moduli space. Assuming $\pi$ to be birational, one can define the orbifold measure $|\omega_{orb}|$ on the arc space $M(k \llb t\rrb)$ with values in a suitably localized Grothendieck ring of varieties $\Ko(\Var_k)_{loc}$ defined in  (\ref{kloc}). The total volume of $M(k \llb t\rrb)$ is computed by the orbifold volume formula as the weighted motivic class of the cyclotomic inertia stack $I_{\hat{\mu}}\mcM = \Hom(B\hat{\mu},\mcM)$ i.e.
\begin{equation}\label{off}\int_{M(k \llb t\rrb)} |\omega_{orb}| = [I_{\hat{\mu}}\mcM]^w = \sum_{a \in \BQ} \BL^{-a} [w^{-1}(a)],  \end{equation}
where $w: I_{\hat{\mu}}\mcM \to \BQ$ is a natural weight function (\ref{sec:weight}). If in addition there exists a crepant resolution $\tilde M \to M$, then one also has $\int_{M(k \llb t\rrb)} |\omega_{orb}| = \BL^{-\dim M}[\tilde M]$ giving rise to a motivic McKay correspondence. 
Formula \eqref{off} also led to a number of remarkable applications in arithmetic and algebraic geometry \cite{Ya17, GWZ20b,GWZ20,LW19, FLW} and furthermore, the right hand side of \eqref{off} admits a cohomological refinement, the Chen-Ruan orbifold cohomology \cite{CR04}.

\subsection{}The main result of this article is a generalization of \eqref{off} to smooth algebraic stacks $\FM$ admitting a birational  morphism $\pi:\FM \to M$ to a variety satisfying Assumption \ref{basas}, most of which are satisfied if $\pi$ is a good moduli space morphism \cite{Al13}. 

To state our theorem, let $U \subset M$ be a dense open such that $\pi^{-1}(U) \to U$ is an equivalence and consider 
\[M^\natural(k \llb t\rrb)= M(k \llb t\rrb) \cap U(k \llp t\rrp). \]
Then $M^\natural(k \llb t\rrb)$ admits a natural measure $|\omega_{M^\natural}|$, the analogue of the orbifold measure, whose volume can be computed by means of the finite order cyclotomic inertia stacks $I_{\mu_N}\mcM = \Hom(B\mu_N,\mcM)$ for $N\geq 1$. 

\begin{theorem}[\ref{mainlim}]\label{introthm}
The series $\sum_{N\geq 1} [I_{\mu_N} \mcM]^{w} \, T^N$ is rational of degree zero and we have
\begin{equation}\label{limform}
\int_{M^\natural(k \llb t\rrb)}|\omega_{M^\natural}|=-\lim_{T\to +\infty} \sum_{N\geq 1} [I_{\mu_N} \mcM]^{w} \, T^N
\end{equation}
in $\Ko(\Var_k)_{loc}$.
\end{theorem}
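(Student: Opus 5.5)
We argue by reducing both sides of \eqref{limform} to a sum over twisted arcs of $\mcM$ and comparing them stratum by stratum. The first step is a stacky change of variables. Every arc $\gamma\in M^\natural(k\llb t\rrb)$ has generic point in $U(k\llp t\rrp)=\mcM(k\llp t\rrp)$. Using Assumption \ref{basas}, which plays the role of properness of $\pi$ in the good moduli space setting, we extend $\gamma$ after a finite ramified base change $t=s^N$. This produces a $\mu_N$-equivariant arc $\Spec k\llb s\rrb\to\mcM$, that is, a twisted arc whose closed point gives a point of $I_{\mu_N}\mcM$. To make this canonical we use Bruhat--Tits theory: a suitable point of the building singles out a distinguished integral model, hence a minimal $N$ and a well-defined representative. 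We then stratify $M^\natural(k\llb t\rrb)$ by the resulting datum, namely a point of $I_{\mu_N}\mcM$ with $N$ minimal.

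The second step computes each stratum. Following the Denef--Loeser/Yasuda argument, but now with Cluckers--Loeser integration on stacks as developed earlier in the paper, the measure $|\omega_{M^\natural}|$ of the stratum attached to a piece $Z\subset I_{\mu_N}\mcM$ equals $\BL^{-w}[Z]$. The exponent $w$ is the weight (age shift) from \S\ref{sec:weight}, and it appears as the Jacobian of the map from twisted arcs to untwisted arcs. The key difference from the Deligne--Mumford case is that a given arc has twisted lifts for every multiple of its minimal order. So if $P_N$ denotes the contribution of arcs whose minimal order is exactly $N$, we expect a relation of the form $[I_{\mu_N}\mcM]^w=\sum_{d\mid N}(\text{lift multiplicities})\,P_d$. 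The factors $\BL^{-w}$ come from the part of the inertia that is not killed by passing to a finer cover.

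The third step establishes rationality. We present the series $\sum_N[I_{\mu_N}\mcM]^wT^N$ as a finite sum of terms $\sum_{N}\BL^{-w(N)}[Z]T^N$ in which $w$ is piecewise linear in $N$ on arithmetic progressions. Concretely, we decompose $I_{\mu_N}\mcM$ via cocharacters of the stabilizers, reducing to a maximal torus and the Weyl group. This makes each term a geometric series of the shape $T^{a}/(1-\BL^{-b}T^{c})$ with $b\ge0$. Such a series is rational of degree zero, and its limit as $T\to\infty$ is $\BL^{b}\cdot(-\BL^{-b}\cdots)$; for example, $\sum_{N\ge1}T^N=T/(1-T)\to -1$. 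Summing these limits and comparing with the stratification from the second step yields \eqref{limform}. In the $\mu_N$-stable case the limit simply picks out the ``$N=\infty$'' contribution.

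The main obstacle is the third step together with its matching against the second. One must show that $w$ and the classes $[I_{\mu_N}\mcM]$ depend on $N$ in a quasi-polynomial way, uniformly in $N$. One must also show that the overcounting of lifts is exactly what the formal $-\lim_{T\to\infty}$ removes. My first approach is local: reduce, via étale slices furnished by Assumption \ref{basas}, to quotients $[V/G]$ with $G$ reductive. There, twisted arcs of order $N$ correspond to cocharacters $\lambda\in X_*(T)/W$ taken modulo $N$, and the computation becomes an explicit sum over rational points of cones, much as in Hrushovski--Kazhdan style integration. Rationality and the limit then follow from the standard fact that $-\lim_{T\to\infty}$ of a cone generating series computes the interior-point sum.
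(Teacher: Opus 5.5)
\textbf{There is a genuine gap in your steps 1--2 and in the ``matching'' you postpone to step 3.}

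For an Artin stack, an arc $x\in M^\natural(K\llb t\rrb)$ has no canonical twisted lift. The set $\pi^{-1}(x)$ of its lifts over all ramification levels is a region in a Bruhat--Tits building; for $\mcM=[X/T]$ with $T$ a torus it is a rational polytope. Its lifts of order $N$ are exactly the $\frac1N$-rational points of that region. So choosing ``a suitable point of the building'' gives neither a meaningful minimal $N$ nor a stratification related to the right-hand side, because $[I_{\mu_N}\mcM]^w$ counts \emph{all} order-$N$ lifts of every arc. Indeed, by Theorem~\ref{fixram}, Fubini and Lemma~\ref{lem:measfubini},
$[I_{\mu_N}\mcM_Z]^{w}=\int_{M^\natural_Z}\mu^\#(\pi_N^{-1}(x))\,|\omega_{M^\natural}|$.
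Here $\mu^\#(\pi_N^{-1}(x))$ is an $x$-dependent lattice-point count (in the toric case, the number of $\frac1N\BZ$-points of a polytope). It is not a constant multiplicity that can be organised as a divisor sum. For the same reason the claim in step 2 is wrong. The quantity $\BL^{-w}[Z]$ is the measure of the twisted arcs $e_N^{-1}(Z)$, not of a set of arcs in $M^\natural$, since $\pi_N$ is neither injective nor of constant fiber cardinality.

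\textbf{The missing idea is to move the limit inside the integral.} Since $(x,N)\mapsto \mu^\#(\pi_N^{-1}(x))$ is a constructible motivic function, Proposition~\ref{prop:switch:lim:int} reduces the theorem, including rationality of degree zero, to the pointwise identity $-\lim_{T\to+\infty}\sum_{N\geq1}\mu^\#(\pi_N^{-1}(x))T^N=1$. By the Forey--Yin formula (Proposition~\ref{prop:vollimeq}), the left side is the Hrushovski--Kazhdan volume of the fiber, essentially an o-minimal Euler characteristic. This is the correct version of your ``standard fact'' about cone series, but the real content is proving $\Vol(\pi^{-1}(x))=1$. That requires three inputs:
\begin{enumerate}
\item the fiber is bounded and closed, hence definably compact (Lemma~\ref{lem:fibbounded});
\item the fiber is definably connected, which comes from $\FS$-completeness: extending $\oST_{K\llb t^{1/n}\rrb}\setminus 0\to\mcM$ across the origin joins any two lifts (Lemma~\ref{lem:vol:Gmy});
\item a retraction argument showing that compact connected definable subsets of the building have volume $1$ (Proposition~\ref{prop:Volbuilding}).
\end{enumerate}

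Your proposal uses only the existence of lifts and never Assumption~\ref{basas}(1). Without it the formula fails. Take $\mcM=[(\BA^2\setminus 0)/\BG_m]$ with weights $(1,-1)$ and $\pi(a,b)=ab$ to $\BA^1$, i.e.\ the line with doubled origin. Conditions (2)--(4) hold and the left side is $1$. But every arc $f$ with $\val(f)>0$ has exactly two lifts, and the right side equals $\BL^{-1}(\BL+1)$. A local cocharacter computation of $[I_{\mu_N}\mcM]^w$ may give rationality, but it cannot produce \eqref{limform} without this global input on the fibers.
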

Theorem \ref{mainlim} below is quite a bit more flexible, in particular it allows to compute volumes of arcs based at any constructible $Z \subset M$ and more general measures vanishing along any simple normal crossing divisor on $\FM$. If $\FM$ is a DM-stack \eqref{limform} reduces to \eqref{off}, see Example \ref{dms}.

In \cite{GWZ24} a $p$-adic version of Theorem \ref{introthm} was proven and used to express certain BPS-invariants in Donaldson-Thomas theory as $p$-adic integrals. Here we give a slightly different, but related by \cite[Remark 6.7]{GWZ24}, application, showing in Section \ref{stef} how Theorem \ref{introthm} leads to a formula for Batyrev's stringy E-function \cite{batyrev_stringy} for $M = M_{r,d}$ the moduli space of semi-stable vector bundles of rank $r$ and degree $d$ on a smooth projective curve. In particular we recover the computation for $(r,d) = (2,0)$ of \cite{Kiem_Li} without the need of an explicit resolution of singularities. 

Furthermore, as for Deligne-Mumford stacks, the right hand side of \eqref{limform} admits a cohomological refinement in mixed Hodge structures, which will be the subject of upcoming work of the third author with S. Schlegel-Mejia.

As a third consequence, mostly of the proof of Theorem \ref{introthm}, we recover in Section \ref{tqt} a theorem of Braun and Moraga \cite{BM} about the behavior of klt-singularities under passage to $T$-quasi-torsors for $T$ a torus. 

Let us now mention the main ideas that go into the proof of Theorem \ref{introthm}, for which we work in the framework of Cluckers-Loeser motivic integration \cite{CL-2008}. For $N \geq 1$ consider the root stack $D^{1/N} = [\Spec(k\llb t^{1/N} \rrb)/\mu_r]$ and the infinite root stack $D^{1/\infty} = [\Spec(k\llb t^{1/\infty} \rrb)/\hat \mu]$. A twisted arc of order $r$ is a $k\llb t \rrb$-morphism $D^{1/N} \to \FM$ and under some mild assumptions on $\FM$ the set of twisted arcs $\FM(D^{1/N})$ admits the structure of an imaginary definable. 

Consider the morphism $\pi\colon \FM(D^{1/\infty})\to M(k\llb{t}\rrb)$. Its fibers are identified with the rational points of a convex region inside the Bruhat-Tits building of some large general linear group. We consider their volume for Hrushovski-Kazhdan integration \cite{HK}. Under our assumptions, we show that this convex region is in some sense contractible and the volume of the fibers is equal to one. By a formula of the first author and Yin \cite{FY}, we then express this volume as a limit of a generating series over twisted arcs of fixed order.  This process is reminiscent of the formation of the motivic Milnor fiber as a limit of the motivic zeta function. 

After a switch of limit and integral and an application of Fubini theorem, the formula is then reduced to the computation of the volume of the set of twisted arcs $\FM(D^{1/N})$. This goes by studying the fibers of $e_N\colon \FM(D^{1/N})\to I_{\mu_N}\mcM(k)$, where $e_N$ is given by restricting a twisted arc to $B\mu_N$. This is rather straightforward, going back to the analysis of Denef and Loeser in the Deligne-Mumford case \cite{DL2002}.

%

Let us also mention that Satriano and Usatine have also developed a theory of motivic integration on algebraic stacks, closer to Kontsevich's original approach using the geometry of jet schemes, with application to birational geometry \cite{SU24}.

The structure of the article is as follows: In Section \ref{pre} we recall the necessary background on motivic integration and algebraic stacks. In Section \ref{sec:voltwisted} we study twisted arcs in algebraic stacks from a definable point of view and we compute the volume of the fibers of $e_r$ in Theorem \ref{fixram}. In Section \ref{hkc} we study the fibers of the $\pi_r$ by means of Bruhat-Tits theory and in Section \ref{gof} we prove our main Theorem \ref{mainlim} and give some explicit examples. Finally Section \ref{apps} contains the aforementioned applications. 

\subsection*{Acknowledgements} 

Many ideas in this work were developed in parallel with the $p$-adic theory of \cite{GWZ24} and we warmly thank Michael Groechenig and Paul Ziegler for sharing their insights with us. We further thank Francesca Carocci, Young-Hoon Kiem,  Matthew Satriano, Jeremy Usatine  and Tanguy Vernet for interesting discussions on various aspects of the paper. 

F.L. was partially supported by the Institut Universitaire de France. D.W. was supported by the Swiss National Science Foundation [No. 218340].

\section{Preliminaries}\label{pre}

We fix a base field $k$ of characteristic zero containing all roots of unity. We work in the setting of Cluckers-Loeser motivic integration with algebraically closed residue field, that we recall below.

\subsection{Definable in Denef-Pas language}
Let $\ACF_k$ be the category of algebraically closed fields containing $k$. Let $\mcL_{DP,k}$ the three sorted language of Denef-Pas, with parameters $k\llp t\rrp$. We work in the $\mcL_{DP,k}$-theory of Henselian discretely valued fields of equicharacteristic zero containing $k\llp t\rrp$ with algebraically closed residue field. For the purposes of motivic integration, we shall restrict to the (non-elementary) class of models of the form $K\llp t\rrp$, where $K$ is an algebraically closed extension of $k$. 

\begin{definition}
A functor $X\colon\ACF_k\to \Set$ is a \emph{definable sub-assignment} (or simply definable set) if it is representable by an $\mcL_{DP,k}$-formula, meaning that there exists an $\mcL_{DP,k}$-formula $\varphi$ such that for every $K\in \ACF_k$, $X(K)=\varphi(K\llp t\rrp)$.

A functor $X\colon\ACF_k\to \Set$ is an \emph{imaginary sub-assignment} (or simply imaginary set) if it is representable by a quotient of a definable set by a definable equivalence relation, meaning that there exists  $\mcL_{DP,k}$-formulas $\varphi(x)$, $\psi(x_1,x_2)$, where $x_1,x_2$ and $x$ are variables with the same arity and sorts such that for every $K\in \ACF_k$, $\psi(K\llp t\rrp)$ is the graph of an equivalence relation $\sim_K$ on $\varphi(K\llp t\rrp)$ and $X(K)=\varphi(K\llp t\rrp)/\sim_K$.
\end{definition}

A map between two imaginary sets with graph representable by an imaginary set is called (abusively) a definable function. 

For example, given a affine algebraic variety $X$ over $k$, we associate three definable sets: $K\mapsto X(K)$, $K\mapsto X(K\llp t\rrp)$ and $K\mapsto X(K\llb t\rrb)$. We have a definable morphism $\res\colon X(K\llb t\rrb)\to X(K)$ induced by the residue map and an inclusion $X(K\llb t\rrb)\subset X(K\llp t\rrp)$. 

Throughout the paper, when describing definable sets and maps, we will often simply mention the $K$-points of the sets and maps, making sure that formulas representing them are uniforms across all $K$
running over algebraically closed extensions of $k$.

\subsection{Rings of constructible motivic functions} 
For $X$ definable, we consider the ring $\FC(X)$ of constructible motivic functions of Cluckers-Loeser. Recall that it is defined as $\FC(X)=\FP(X)\otimes_{\FP_0(X)}\FQ(X)$.  Here $\FP(X)$ is the ring generated over $\BA=\BZ\left[\eL,\eL^{-1},\left(\frac{1}{1-\eL^{-k}}\right)_{k\in \BN_{\geq 1} }\right]$ by functions of the form $\alpha\colon X\to \BZ$ and $\eL^{\alpha}$, where $\alpha \colon X\to \BZ$ is a definable function, $\FP_0(X)$ is the ring generated over $\BZ[\eL^{-1}]$ by characteristic functions of definable subsets of $X$. Finally, $\FQ(X)$ is the Grothendieck ring of definable sets over $X$ of the form $W\to X$ where $W\subset X\times K^r$ and the map is the projection. 

Let $N\geq 1$ be an integer. We define $\FP_{1/N}(X)$ as the ring generated over $\FP(X)$ by functions of the form $\eL^{\alpha/N}$, where $\alpha\colon X\to \BZ$ is definable, with the extra relation $(\eL^{1/N})^N=\eL$. Define $\FC_{1/N}(X)$ as $\FP_{1/N}(X)\otimes_{\FP_0(X)}\FQ(X)$. Elements of $\FC_{1/N}(X)$ can be integrated formally the same way as elements of $\FC(X)$.

For $N\geq 1$, we consider the degree $N$ ramified extension $k\llp t^{1/N}\rrp$ of $k\llp t\rrp$, with value group $\frac{1}{N}\BZ$. For $X$ a definable by a $\mcL_{DP,k}$-formula with parameters $k\llp t^{1/N}\rrp$, we now define a variant of the ring  of constructible motivic functions $\tilde{\mcC}_{1/N}(X)$, insisting that the valuation takes values in $\frac{1}{N}\BZ$. Define $\tilde{\FP}_{1/N}(X)$ as the ring generated over $\BZ\left[\eL,\eL^{-N},\left(\frac{1}{1-\eL^{-k/N}}\right)_{k\in \BN_{\geq 1} }\right]$ by functions of the form $N_\alpha$ and $\eL^{\alpha}$, where $\alpha \colon X\to \frac{1}{N}\BZ$ is a definable function and $N_\alpha(x)=\# \set{y\in \frac{1}{N}\BZ \mid 0<y\leq \alpha(x)}$. Then $\tilde{\FC}_{1/N}(X)=\tilde{\FP}_{1/N}(X)\otimes _{\FP_0(X)}\FQ(X)$. 

\subsection{Forms and measures} \label{fam}

By a $k\llb{t}\rrb$-variety we will mean a separated, integral and finite type scheme over $\Spec(k\llb{t}\rrb)$. Given such a $k\llb{t}\rrb$-variety $X$ of relative dimension $d$ and a $d$-form $\omega$ on the smooth locus of the generic fiber $X_{k\llp{t}\rrp}^{sm}$ we obtain a motivic measure $|\omega|$ on the definable set
\[K \longmapsto X^{sm}(K\llp{t}\rrp), \]
by writing definably-locally $\omega = f dx_1\wedge \dots \wedge dx_d$ and integrating $|f|$, see \cite{CL-2008}. Similarly, if $\omega \in H^0(X_{k\llp{t}\rrp}^{sm}, (\Omega_{X/k\llb{t}\rrb}^d)^{\otimes N})$ is a pluricanonical form we obtain a measure $|\omega|$ valued in $\tilde{\mcC}_{1/N}(X)$ by locally integrating $|f|^{1/N}$.
By extending trivially we obtain a measure on all of $K \mapsto X(K\llp{t}\rrp)$ and since $X/k\llb{t}\rrb$ is separated this also gives a measure on $K \mapsto X(K\llb{t}\rrb) \subset X(K\llp{t}\rrp)$, still denoted by $|\omega|$. 

The measure $|\omega|$ will of course depend on the choice of $\omega$, but in many situations one can construct measures in a more intrinsic manner. One such construction, see \cite[Section 4.1]{Ya17}, associates to any invertible $\Oc_X$-submodule $\FI \subset (\Omega^d_{X/k\llb{t}\rrb})^{\otimes N} \otimes \mathcal{K}(X)$, where $N$ is any positive integer and $ \mathcal{K}(X)$ is the sheaf of total quotient rings on $X$, a measure $|\omega_{\FI}|$ valued in $\tilde{\mcC}_{1/N}(X)$ as follows:\\
Pick a Zariski-cover $X=\bigcup_i U_i$ which trivializes $\FI$. A trivializing section of $\FI_{|U_i}$ gives rise to a pluricanonical form $\omega_i$ on $U_i^{sm}$ and in particular a measure $|\omega_i|$ on $K \mapsto U_i(K\llb{t}\rrb)$ valued in  $\tilde{\mcC}_{1/N}(X)$. The assumption on $\FI$ being invertible implies that the $|\omega_i|$ glue together to give a measure $|\omega_{\FI}|$ on $K \mapsto X(K\llb{t}\rrb)$. 

If $X/k\llb{t}\rrb$ is smooth, $\Omega^d_{X/k\llb{t}\rrb}$ is itself invertible and we write $|\omega_X|$ for $|\omega_{\Omega^d_{X/k\llb{t}\rrb}}|$, sometimes called the Weil or canonical measure. If furthermore $D= \sum_{i\in I} u_i D_i$ is a relative $\BQ$-divisor on $X$ we write $|\omega_{X,D}|$, or $|\omega_{D}|$ if the space $X$ in understood, for the measure associated with $(\Omega^d_{X/k\llb{t}\rrb}(D))^{\otimes N}$, where $N\geq 1$ is such that $ND$ is a $\BZ$-divisor. 

Now let $M = X/\Gamma$ be the quotient of a smooth variety $X$ by a finite group $\Gamma$ and $D= \sum_{i\in I} u_i D_i$ a relative $G$-invariant $\BQ$-divisor. Then there exists a unique relative $\BQ$-divisor $E$ on $M$ such that $q^*\Oc(K_M+E)^{\otimes N} \cong (\Omega^d_{X/k\llb{t}\rrb}(D))^{\otimes N}$ for a suitable $N \geq 1$ \cite[Lemma 7.2]{Ya17} . We write $|\omega_{D,orb}|$ for the measure associated with $\FI = \Oc(K_M+E)^{\otimes N}$. If $D=0$  we write simply $|\omega_{orb}|$, the orbifold measure on $M$. 

\subsection{Integration}

Let $X$ be a definable set, and $\abs{\omega}$ a definable volume form on $X$. Cluckers and Loeser define a subgroup of $\FC(X)$ consisting of integrable motivic constructible functions with respect to $\abs{\omega}$, and for such a $\varphi\in \FC(X)$, they associate
$\int_X \varphi \abs{\omega}\in \FC(\Spec{k})$, the integral of $\varphi$ with respect to $\abs{\omega}$. 

The integral is also defined over a general base. Let $f\colon X\to S$ be a definable map, and $\abs{\omega_{X/S}}$ a definable volume form on the fibers of $f$. Let $\varphi\in \FC(X)$ such that $\varphi$ is integrable with respect to $\abs{\omega_{X/S}}$ on the fibers of $f$. Then there is a constructible motivic function $\psi$ which for every $s\in S$ away from a subset of $S$ of smaller dimension satisfy
\[
\psi(s)=\int_{f^{-1}(s)} \varphi \abs{\omega_{X/S}}.
\]

This integration over a general base allows for a Fubini theorem, see \cite[15.2]{CL-2008}. 

\begin{theorem}
\label{thm:fubini}
Let  $f\colon X\to S$ a definable map, with equidimensional fibers. Let $\abs{\omega_X}$ and $\abs{\omega_S}$ be definable volume forms on $X$ and $S$. They induce a Leray residue form $\abs{\omega_{X}/f^*(\omega_S)}$ on the fibers of $f$. Let $\varphi\in \FC(X)$. Assume that $\varphi$ is integrable with respect to $\abs{\omega_X}$. Then $\varphi$ is integrable with respect to the quotient form $\abs{\omega_{X}/f^*(\omega_S)}$ on the fibers of $f$, let $\psi\in \FC(S)$ be the relative integral satisfying for $s\in S$ away from a subset of smaller dimension 
\[
\psi(s)=\int_{f^{-1}(s)} \varphi \abs{\omega_{X}/f^*(\omega_S)}.
\]
Then $\psi$ is integrable with respect to $\abs{\omega_S}$ and we have
\[
\int_X\varphi \abs{\omega_X}=\int_S\psi \abs{\omega_S}.
\]
\end{theorem}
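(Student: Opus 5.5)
The plan is to deduce the statement from the Fubini theorem for Cluckers--Loeser integration against the canonical measures (the case of products of affine spaces with coordinate projections), by a definable change of variables. The point is that $\abs{\omega_X}$, $\abs{\omega_S}$ and the Leray residue form are, definably-locally, a fixed definable function times a standard form. So the general statement should reduce to the standard one with an extra density.

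First I would reduce to the case where $S$ is a definable subset of $K\llp t\rrp^m\times(\text{residue and value group sorts})$ of full dimension $m$, and $X\subset S\times K\llp t\rrp^{n}\times(\cdots)$ with $f$ the coordinate projection. To do this, replace $X$ by the graph of $f$. Partition $X$ and $S$ into finitely many definable pieces, discarding pieces of lower dimension, which are null for every volume form. On each remaining piece, a coordinate projection is a definable bijection onto an open subset of affine space; this uses dimension theory for definable sets in the Denef--Pas language. Since the fibres of $f$ are equidimensional, the same can be arranged fibrewise, uniformly in $s$, so that on each piece $f$ becomes the projection $(s,y)\mapsto s$. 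The change of variables formula of \cite{CL-2008} transports $\abs{\omega_X}$ and $\abs{\omega_S}$ to forms $\abs{g}\,\abs{dx}$ and $\abs{h}\,\abs{ds}$ for definable functions $g,h$ that are nonvanishing away from a null set.

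Next I would compute the Leray residue form in these coordinates. Writing $dx=ds\wedge dy$, the quotient form $\abs{\omega_X/f^*\omega_S}$ on $f^{-1}(s)$ is $\abs{g(s,y)/h(s)}\,\abs{dy}$. Integrability of $\varphi$ against $\abs{\omega_X}$ means that $\varphi\cdot\abs{g}$ is integrable for the canonical measure. The canonical Fubini theorem \cite[Theorem 10.1.1 and Section 15]{CL-2008} then says two things. First, $y\mapsto\varphi(s,y)\abs{g(s,y)}$ is integrable for $s$ outside a null set. Second, its fibre integral $\Psi(s)$ is integrable, and $\int_X\varphi\abs{g}\abs{dx}=\int_S\Psi\,\abs{ds}$. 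Put $\psi=\Psi\cdot\abs{h}^{-1}$. Then $\psi(s)$ is the fibre integral against the Leray form, and $\int_S\psi\abs{\omega_S}=\int_S\Psi\abs{ds}$, which gives the formula. Multiplication by $\abs{h}^{\pm1}$ stays inside $\FC$ (or $\tilde\FC_{1/N}$ for pluricanonical forms) because it is of the form $\eL^{-\mathrm{ord}\, h}$. Finally, summing over the finite partition gives the global statement, since the integral is additive on finite definable partitions.

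The main obstacle is the first step: producing, uniformly in the parameter $s$ and compatibly with $f$, a finite definable partition on which $f$ is a coordinate projection with nonvanishing Jacobian data. This must also be compatible with the constraint that the relative integral is only defined off a set of smaller dimension. I would handle it with the cell decomposition and Jacobian property theorems of \cite{CL-2008}, applied in families over $S$. The equidimensionality of the fibres ensures that the same $n$ works on every piece, so that the discarded sets are null both in $X$ and fibrewise for $s$ outside a lower-dimensional subset of $S$.
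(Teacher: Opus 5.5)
The paper gives no argument of its own here; it simply imports the statement from \cite[15.2]{CL-2008}. Your reduction is the standard derivation behind that reference and is correct: write $\abs{\omega_X}$ and $\abs{\omega_S}$ as densities $\eL^{-\alpha}$ times canonical forms in coordinates adapted to $f$, apply the Fubini property of the Cluckers--Loeser push-forward for canonical measures, and divide by $\abs{h}$ on the fibres. The only imprecision is your claim that a coordinate projection becomes a bijection onto an open subset of affine space: on the valued-field coordinates it is in general only finite-to-one (e.g.\ $y^2=x$), and it becomes injective only after adjoining residue-field coordinates coming from cell decomposition, which the auxiliary sorts you allow already accommodate.
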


\subsection{Switching limit and integral}
Let $X$ be a definable set and $\varphi\in \FC(X\times \BZ_{>0})$. By \cite[Theorem 14.4.1]{CL-2008}, the series $\sum_{n\geq 1} \varphi(\cdot,n) T^n$ is rational, in the sense that it is equal to $P(T)/Q(T)$, where  $P(T)$ is a polynomial in $\FC(X)[T]$ and  $Q(T)\in \BZ[\eL,\eL^{-1}][T]$ is a product of terms of the form $1-\eL^\alpha T^\beta$, with $\alpha\in \BZ$, $\beta\in \BZ_{>0}$. If $P$ and $Q$ have the same degree $d$, we say that the limit as $T\to+\infty$ of $\sum_{n\geq 1} \varphi(\cdot,n) T^n$ exists and define 
\[
\lim_{T\to+\infty} \sum_{n\geq 1} \varphi(\cdot,n) T^n= P_d/Q_d,
\]
where $P_d$ and $Q_d$ are the top degree coefficients of $P$ and $Q$. Note that the specific form of $Q$ ensures that $Q_d\in \pm \eL^{\BZ}$, hence the limit is in $\FC(X)$. 

The following proposition shows that this limit procedure can be switched with integration. 

\begin{proposition}
\label{prop:switch:lim:int}
Let $\varphi\in\FC(S\times X\times \BZ_{>0})$. Assume that for each $x\in X$, the limit $\lim_{T\to +\infty}  \sum_{n\geq 1} \varphi(\cdot,\cdot,n)T^n$ exists and as a function on $S\times X$ is integrable with respect to $S$. Assume also that $\varphi$ is integrable with respect to $S\times \BZ_{>0}$. Then the following limit exists and we have the equality
\[
\int_X \lim_{T\to +\infty}  \sum_{n\geq 1} \varphi(\cdot,\cdot,n)T^n=\lim_{T\to +\infty}  \sum_{n\geq 1}\int_X \varphi(\cdot,\cdot,n)T^n
\]
in $\FC(S)$.
\end{proposition}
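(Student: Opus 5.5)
The plan is to reduce the statement to a rationality statement for a generating series over $S$, with the integration over $X$ taking place inside the constructible functions, and then compare top-degree coefficients.

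First, consider $\Phi(s,x,T)=\sum_{n\geq 1}\varphi(s,x,n)T^n$. By \cite[Theorem 14.4.1]{CL-2008}, applied over the base $S\times X$, it is rational. I would first make this uniform: after a definable partition of $S\times X$ into finitely many pieces, one can take a common denominator $Q(T)=\prod_j(1-\eL^{\alpha_j}T^{\beta_j})$ with $Q\in\BZ[\eL,\eL^{-1}][T]$ independent of $(s,x)$. The numerator is then $P(s,x,T)\in\FC(S\times X)[T]$. Replacing each piece's denominator by the product of all the denominators and multiplying numerators accordingly, a single $Q$ works everywhere.

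The existence of the limit pointwise means that $\deg P(s,x,\cdot)\le \deg Q=d$, with the limit equal to $P_d(s,x)/Q_d$. So the limit function is $P_d/Q_d\in\FC(S\times X)$, and by assumption it is integrable over $X$ relative to $S$. Here $Q_d\in\pm\eL^{\BZ}$ is a constant, so it factors out of the integral.

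Second, I would show that the coefficients $P_i$ of $P$ are integrable over $X$ relative to $S$. The relation is $P(s,x,T)=Q(T)\sum_n\varphi(s,x,n)T^n$ truncated to degree at most $d$. Each $P_i$ is therefore a finite $\BZ[\eL^{\pm1}]$-linear combination of the functions $\varphi(\cdot,\cdot,n)$ for $n\le i$. Since $\varphi$ is integrable with respect to $S\times\BZ_{>0}$, each $\varphi(\cdot,\cdot,n)$ is integrable over $X$, hence so is each $P_i$. Integrating the identity coefficientwise gives
\[
\sum_{n\ge1}\Big(\int_X\varphi(\cdot,\cdot,n)\Big)T^n=\frac{\int_X P(\cdot,\cdot,T)}{Q(T)}.
\]
This uses the integrability in $n$, via Theorem \ref{thm:fubini} on $X\times\BZ_{>0}$, and the linearity of the relative integral. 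The right-hand side is a rational function whose numerator has degree at most $d$, so the limit exists and equals $\int_X P_d/Q_d$, which is the left-hand side of the claimed equality.

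The main obstacle is the first step: the uniformity of the denominator together with a genuine degree bound on the numerator. Pointwise existence of the limit only says that the degree of the numerator is at most $d$ after cancellation, and cancellation may occur differently at different points. I would handle this by inspecting the proof of \cite[Theorem 14.4.1]{CL-2008}. There, $\varphi$ is written via cell decomposition and Presburger rationality as a finite sum of terms $c(s,x)\,n^a\eL^{\gamma n}$ on definable pieces where $n$ ranges over arithmetic progressions with bounds. This gives explicit denominators, and on each piece the condition "limit exists" becomes a condition on which coefficients $c$ vanish. Since these conditions are definable, one can repartition so that on each piece a fixed numerator of degree at most $d$ works, and then apply the additivity of integrals over the finite partition of $X$.
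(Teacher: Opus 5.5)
Your argument is essentially the paper's proof. Both take the denominator $\prod_i(1-\eL^{a_i}T^{b_i})$ from \cite[Theorem 14.4.1]{CL-2008}, which is independent of $S\times X$, identify the limit with the top numerator coefficient times $\prod_i(-\eL^{-a_i})$, and integrate the numerator coefficientwise over $X$ by linearity; your remark that each numerator coefficient is a finite combination of the $\varphi(\cdot,\cdot,n)$ supplies the integrability that the paper leaves implicit.

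The paper simply reads the hypothesis as saying that the numerator in $\FC(S\times X)[T]$ already has degree at most that of the denominator, so your last paragraph is not needed. As written it would also not work: whether a constructible function vanishes at a point is an identity in a localized Grothendieck ring, not in general a definable condition, so you cannot repartition by it. Under a pointwise reading, the right fix is that the coefficients of degree above $\deg Q$ vanish at every point and hence vanish identically, by the spreading-out and compactness argument used in the proof of Corollary \ref{cor:orbi-form}.
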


\begin{proof}
Let $\varphi\in\FC(S\times X\times \BZ_{>0})$ as in the statement of the proposition. By \cite[Theorem 14.4.1]{CL-2008}, $\sum_{n\geq 1} \varphi(\cdot,x,n)T^n$ is rational, more precisely, there exists a finite set $I$,  and for $i\in I$, $a_i\in \BZ, b_i\in \BZ_{>0}$ and a polynomial $P=\sum_{j=1}^r{\alpha_j T^j}\in \FC(S\times X)[T]$ such that 
\begin{equation}
\label{eqn-sum-limit}
\sum_{n\geq 1} \varphi(\cdot,\cdot,n)T^n=\frac{\sum_{i=j}^r{\alpha_j T^i}}{\prod_{i\in I} (1-\eL^{a_i}T^{b_i})}.
\end{equation}

Since we assume that $\lim_{T\to +\infty}  \sum_{n\geq 1} \varphi(\cdot,\cdot,n)T^n$ exists, $r$ can be chosen such that $r=\prod_{i\in I} b_i$, in which case
\[
\lim_{T\to +\infty}  \sum_{n\geq 1} \varphi(\cdot,\cdot,n)T^n=\prod_{i\in I}(-\eL^{-a_i})\alpha_r.
\]
Its integral with respect to $S$ is then equal to $\prod_{i\in I}(-\eL^{-a_i}) \int_{x\in X} \alpha_r$. 

On the other hand, since the denominator of the right hand side of (\ref{eqn-sum-limit}) does not depend on $S\times X$, by linearity of the integral we have

\[\sum_{n\geq 1} \int_{x\in X}\varphi(\cdot,x,n)T^n=\frac{\sum_{i=j}^r{\int_{x\in X}\alpha_j(\cdot,x) T^i}}{\prod_{i\in I} (1-\eL^{a_i}T^{b_i})}.
\]
Hence $\lim_{T\to +\infty}\sum_{n\geq 1} \int_{x\in X}\varphi(\cdot,x,n)T^n$ exists and is equal to $\prod_{i\in I}(-\eL^{-a_i}) \int_{x\in X} \alpha_r$, which concludes the proof. 
\end{proof}

\subsection{Definable structure on ramified arcs} 
We consider subsets of ramified arcs as follows. Recall that by assumption, $k$ contains all roots of unity. For 
$K$ an extension of $k$ and $N\geq 1$, we consider the field extension $K\llp t^{1/N}\rrp$ of $K\llp t\rrp$ as $K\llp t^{1/N}\rrp\simeq K\llp t\rrp[X]/(X^N=t)\simeq K\llp t\rrp^N$, where the last equivalence is between $K\llp t\rrp$-vector spaces. So we view $K\llp t^{1/N}\rrp$ as the definable $K\llp t\rrp^N$, and observe that the multiplication is given by a definable linear map $K\llp t\rrp^{2N}\to K\llp t\rrp^{2N}$. The Galois group $\mu_N$ is described by $N$ definable linear endomorphisms of $K\llp t\rrp^N$. The valuation on $K\llp t\rrp$ extend uniquely to the field $K\llp t^{1/N}\rrp$, yielding a valuation with value group $\frac{1}{N}\BZ$ that we still denote $\val$. 

Given a definable $X\subset K\llp t\rrp^r$, the points of $X$ in $K\llp t^{1/N}\rrp$ is then described by a definable subset of $K\llp t\rrp^{rN}$. This subset depend on the formula describing $X$, not only on $X$, but we will ignore this issue notationally. We denote it by $X(K\llp t^{1/N}\rrp)$. If $X$ has dimension $d$, $X(K\llp t^{1/N}\rrp)$ has dimension $Nd$. 

A definable map $f\colon X\to Y$ induces a definable map $X(K\llp t^{1/N}\rrp)\to Y(K\llp t^{1/N}\rrp)$. The valuation map $\val\colon K\llp t^{1/N}\rrp^*\to \frac{1}{N}\BZ$ is described on $K\llp t\rrp^N\backslash\set{0}$ by a definable map as well: the map $N\val$ is definable. Since $K\llp t^{1/N}\rrp\simeq K\llp t\rrp[X]/(X^N=t)=\oplus_{0\leq k< N} X^k K\llp t\rrp$, if $x\in K\llp t^{1/N}\rrp^*$ is written $x=x_0+Xx_1+\dots+X^{N-1}x_N$, then $N\val(x)=\min\set{N\val(x_k)+k}$, which yields a definable function on $K\llp t\rrp^N\backslash\set{0}$.

By a definable subset of $K\llp t^{1/N}\rrp^r$, we mean a definable subset of $K\llp t\rrp^{rN}$ through the above identification. For $Y$ such a definable subset of $K\llp t^{1/N}\rrp^r$, we introduce the invariant definable set
\[
Y^{\mu_N}=\set{y\in Y\mid \sigma(y)=y \text{ for every }\sigma \in \mu_N},
\]
which is definable since the elements of the Galois group $\mu_N$ are definable functions. If $Y$ is of dimension $d$, then $Y^{\mu_N}$ is of dimension at most $d/N$. 

Starting from a definable $X\subset K\llp t\rrp^r$, we get that $X(K\llp t^{1/N}\rrp)^{\mu_N}$ is in definable bijection with $X$.

\begin{rmk}
Beware that our definition is more restrictive than that of a definable subset of $K\llp t^{1/N}\rrp^r$ in the language $\mcL_{DP,k}$ with parameters $k\llp t^{1/N}\rrp$. For example $t^{1/N}$ is only algebraic, we cannot distinguish it from its Galois conjugates using an $\mcL_{DP,k}$-formula with parameters $k\llp t \rrp$. 
\end{rmk}

\subsection{Integration on ramified arcs} \label{inrac}

Let $X$ be a definable subset of $K\llp t\rrp^r$. Since constructible motivic functions are built out of $\mcL_{DP,k}$-formulas, if $\varphi\in \mcC(X)$, we get naturally a constructible motivic function $\varphi_N$ in  $\widetilde{\mcC}_{1/N}(X(K\llp t^{1/N}\rrp))$ by interpreting those formulas in $K\llp t^{1/N}\rrp$. Using the identification $K\llp t^{1/N}\rrp^r\simeq K\llp t\rrp^{Nr}$, we view $\varphi_N$ as a constructible motivic functions in $\mcC_{1/N}(K\llp t\rrp)^{Nr})$. We denote the ring of such constructible function by $\mcC_{1/N}(X(K\llp t^{1/N}\rrp))$. 

Hence a constructible motivic function in  $\mcC_{1/N}(K\llp t^{1/N}\rrp)^r)$ can be integrated by viewing it as an element of $\mcC_{1/N}(K\llp t\rrp)^{Nr})$. Similarly, if $X\subset \mcC_{1/N}(K\llp t^{1/N}\rrp)^r)$, it is endowed with a canonical volume form $\abs{\omega_X}$ and we can define $\int_{X(K\llp t^{1/N}\rrp)} \varphi\abs{\omega_X}$ for $\varphi\in \mcC(X)$.

Note that with this definition, the volume of $tK\llb t^{1/N}\rrb$ is $\eL^{-N}$, whereas the volume of $t^{1/N}K\llb t^{1/N}\rrb$ is $\eL^{-1}$.

Since we normalized the value group as $\frac{1}{N}\BZ$ for ramified arcs, the restriction of $\varphi_N$ to $X(K\llp t^{1/N}\rrp)^{\mu_N}$ is identified to $\varphi$.

Let $X\subset K\llp t\rrp^r$ be definable of dimension $d$ and $\abs{\omega}$ a definable volume form on $X$. Up to some finite definable partition of $X$ it is induced by a constructible motivic function $f\in \mcC(X)$ and a choice of $d$-coordinates $1\leq i_1<\dots<i_d\leq r$ such that $\abs{\omega}=\abs{fdx_{i_1}\wedge\dots dx_{i_d}}$. We still denote by $\abs{\omega}$ the induced definable volume form on $X(K\llp t^{1/N}\rrp)$.

\subsection{The Hrushovski-Kazhdan volume}

The statement of our main theorem involves motivic integrals in the sense of Cluckers-Loeser, as recalled above. However, the proof requires to use also a variant defined by  Hrushovski and Kazhdan \cite{HK}. We outline here what we need of their construction. Note that the setup is different, since we need to work in the theory of algebraically closed valued field.

In this section, we  consider the first order theory $\ACVF$ of algebraically closed valued fields of equicharacteristic zero in the two-sorted language $\mcL_\ACVF$. The two sorts are $\VF$ and $\RV$. We put the ring language on $\VF$, with symbols $(0,1,+,-,\cdot)$, on $\RV$ we put the group language $(\cdot, ()^{-1})$, a unary predicate $\kk^\times$ for a subgroup, and operations $+ : \kk^2\to \kk$ where $\kk$ is the union of $\kk^\times$ and a symbol 0. We add a unary function $\rv : \VF^\times=\VF\backslash\set{0} \to \RV$. 

We will also consider the imaginary sort $\Gam$ defined by the exact sequence
\[
1\longrightarrow \kk^\times \longrightarrow \RV\longrightarrow \Gam\longrightarrow 0,
\]
together with maps $\valrv : \RV \to \Gam$ and $\val : \VF^\times \to \Gam$. 

If $L$ is a valued field, with valuation ring $\FO_L$ and maximal ideal $\FM_L$, define an $\FL$-structure by $\VF(L)=L$, $\RV(L)=L^\times/(1+\FM_L)$, $\kk(L)=\FO_L/\FM_L$, $\Gam(L)=L^\times/\FO_L^\times$. Note that the valuation ring is definable in this language because $\FO_L^\times=\rv^{-1}(\kk^\times(L))$. 

Set $K=k\llp{t}\rrp$. View $K$ as a fixed base structure, for the rest of this section, we will only consider $\mcL(K)$-structures, where $\mcL(K)$ is the language obtained by adjoining to $\mcL$ constants symbols for elements of $K$. Any valued field extending $K$ can be interpreted as an $\mcL(K)$-structure. Denote $\ACVF_K$ the $\mcL(K)$-theory of such algebraically closed valued fields. The theory $\ACVF_K$ admits quantifier elimination in the language $\mcL(K)$. Quantifier elimination was first proven by Robinson using a two sorted language, with one sort $\VF$ and one sort $\Gamma$ for the value group, see for example \cite{weispfenning_quantifier_1984}. 

We will use the notation $\VF^\bullet$ for $\VF^n$ for some $n$. 
The $\mcL(K)$-definable subsets of $\VF^\bullet$ are semi-algebraic sets, that is boolean combinations of sets of the form
\[
\set{x\in \VF^n\mid \val(f(x))\geq \val(g(x))},
\]
where $f$ and $g$ are polynomials with coefficients in $K$. Observe that constructible sets are semi-algebraic, since one can take $g=0$ in the definition. 

Denote by $\K{\VF_K}$ the free group of $\mcL(K)$-definable subsets of ${\VF}^\bullet$, with the following relations :
\begin{itemize}
\item $[X]=[Y]$ if there is a semi-algebraic bijection $X\to Y$
\item $[X]=[U]+[V]$ if $X$ is the disjoint union $X=U\cup V$.
\end{itemize}
Cartesian product endows $\K{\VF_K}$ with a ring structure. 

\begin{rmk}
Note that this framework allows us to consider general semi-algebraic subsets of $K$-varieties as studied for example by Florent Martin in \cite{martin_cohomology_2014}. We say that $S$ is a semi-algebraic subset $X$, for $X$ a $K$-scheme, if $S$ is a finite union $S=\cup S_i$ such that for every $i$, there is an open affine subset $U_i=\Spec{A_i}$ of $U$ such that $S_i\subseteq U_i$ is defined in $U_i$ by boolean combination of subsets of the form $\set{ y\in U_i^\an\mid \val(f(y))\leq r\cdot\val(g(y))}$, with $f,g\in A_i$, $r\in \mathbb{Q}$. Hence we can consider its class $[S]\in \K{\VF_K}$. 
\end{rmk}
 
\begin{rmk}
Hrushovski and Kazhdan use a slightly different definition for $\K{\VF_K}$. They define it as the group generated by isomorphism classes of definable sets $X\subseteq \VF^\bullet\times \RV^\bullet$, such that for some $n\in \BZ_{>0}$, there is some definable function $f : X\to \VF^n$ with finite fibers, with cut-and-paste relations (the function $f$ is not part of the data).  We can show that for such an $X$, there is some definable $X'\subseteq \VF^\bullet$, with a definable bijection $X\simeq X'$, see \cite[Lemma 8.1]{HK}. Hence both definitions lead to the same group. Nevertheless, this alternative presentation is very useful for defining motivic integration, since it amounts to relating $\K{\VF_K}$ and some group related to definable sets in the sort $\RV$, that we will now define. 
\end{rmk}

Define $\RV_K$ to be the category of objects $Y\subseteq \RV$, with definable functions as morphisms. The category $\RV_K[n]$ is the category of pairs $(Y,f)$, with $Y\subseteq \RV^\bullet$ definable and $f : Y\to \RV^n$ a definable finite-to-one function. A morphism between $(Y,f)$ and $(Y',f')$ is a definable function $g : Y\to Y'$. The category $\RES_K$ is the full subcategory of $\RV_K$ whose objects $Y$ satisfy $\valrv(Y)$ finite. One defines similarly $\RES_K[n]$ to be the full subcategory of $\RV_K[n]$ whose objects $(Y,f)$ satisfy $\valrv(Y)$ finite. 

Note that the definition of morphisms in $\RV_K[n]$ implies that $\RV_K[n]$ is equivalent to the category of definable sets $X\subseteq \RV^\bullet$ such that there exist a definable function $f : X \to \RV^n$ with finite fibers.

We also consider the full subcategory $\RES_K[n]$ of $\RV_K[n]$ which objects have finite image in $\Gamma$. 

For $\alpha=(\alpha_1,\dots \alpha_r)\in \Gamma^r$, set $V_{\alpha_i}=\valrv^{-1}(\alpha)\subset \RV$ and $V_\alpha=V_{\alpha_1}\times \dots V_{\alpha_r}$. Up to a finite definable partition, every object of $\RES_K[n]$ is included in some $V_\alpha$ for some  definable $\alpha$. Here $\alpha$ is in $\BQ^r$ since $\BQ$ is the definable closure of the value group of $K=k\llp t\rrp$. 

Up to choosing roots of $t$, an object $X$ in $\RES_K[n]$ can be sendt to an constructible set over $k$. The morphism depends on the choice of roots, but not the image, so it defines a class in the Grothendieck group of varieties over $k$, still denoted $[X]$. 

From those categories one form the graded categories 
\[
\RV_K[\leq n]:=\coprod_{0\leq i\leq n} \RV_K[i],
\]
\[
\RV_K[*]:=\coprod_{i\in \BZ_{\geq 0}} \RV_K[i],
\]
\[
\RES_K[\leq n]:=\coprod_{0\leq i\leq n} \RES_K[i],
\]
\[
\RES_K[*]:=\coprod_{i\in \BZ_{\geq 0}} \RES_K[i].
\]

Hrushovski and Kazhdan main result is an isomorphism 
\[\int \colon \K{\VF_K}\to \K{\RV_K[*]}/\Isp,
\]
where $\Isp$ is an explicitly described ideal of $\K{\RV_K[*]}$. 

They then proceed to define a morphism 
\[\FE_b:\K{\RV_K[*]}/\Isp\longrightarrow \K{\Var_k},\]
such that for $[X]_n\in \RES_K[n]$,  $\FE_b([X]_n)=[X]\in \K{\Var_k}$, and for $\Delta\subset \Gamma^n$ definable, $\FE_b(\val^{-1}(\Delta))=\eu_b(\Delta)[\Gm_{k}^n]$, where $\eu_b(\Delta)=\eu(\Delta\cap [-M,M)]^n)$ for some large enough $M$, and $\eu$ is the (o-minimal) Euler characteristic.

On then define a morphism
\[
\Vol\colon \K{\VF_K} \longrightarrow \K{\Var_k}
\]
by $\Vol=\FE_b\circ\int$.

\begin{proposition}
\label{prop:defvolx}
Let $f\colon X\to S$ a definable map in $\ACVF_K$. Then the map $x\in S(L\llp t\rrp)\mapsto \Vol(f^{-1}(x))$ is a constructible motivic function. 
\end{proposition}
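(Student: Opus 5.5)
The strategy is to make the Hrushovski--Kazhdan construction uniform in parameters and then check that each of its ingredients is uniformly definable over the base. Concretely, I would show that the map $x\mapsto \Vol(f^{-1}(x))$ is piecewise of the form $[W_x]\cdot\eL^{\alpha(x)}$ (plus Euler-characteristic integer weights) with $W\to S$ a definable family over the residue field and $\alpha$ a definable $\BZ$-valued (or $\frac1N\BZ$-valued) function. Such functions are exactly the generators of $\FC(S)$ as recalled above.

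The first step is to apply the relative version of the Hrushovski--Kazhdan isomorphism. Their construction is functorial in the base structure: for $x\in S$ one works over $K(x)$, and the proof of $\int$ goes through a decomposition of $f^{-1}(x)$ into finitely many cells, each in definable bijection, via the lifting map $\mathbb{L}$, with $\rv^{-1}$ of some object of $\RV[*]$. By compactness and uniformity of the cell decomposition in $\ACVF$ (Hrushovski--Kazhdan prove their results over arbitrary base structures, and $\ACVF$ has uniform cell decomposition), one obtains a finite definable partition $S=\bigsqcup S_j$. On each $S_j$ there is a definable family $Y_j\subset S_j\times\RV^\bullet$, with finite-to-one maps to $\RV^{n_j}$, such that $\int [f^{-1}(x)]=\sum_j[(Y_j)_x]_{n_j}$ modulo $\Isp$ for all $x\in S_j$.

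The second step is to evaluate $\FE_b$ uniformly. Each fibre $(Y_j)_x$ is partitioned further, uniformly in $x$, into pieces that are fibred over a definable $\Delta_x\subset\Gamma^m$ with $\RES$-fibres. I would then use the standard decomposition of $\K{\RV[*]}$ as $\K{\RES[*]}\otimes\K{\Gamma[*]}$ (Hrushovski--Kazhdan, Prop.~10.10), again uniformly. The $\RES$ part gives a definable family of constructible sets over $k$, i.e.\ an element of $\FQ(S)$ after choosing roots of $t$; the choice does not affect classes. The $\Gamma$ part contributes $\eu_b(\Delta_x)[\Gm^{m}]$. Restricting to $x\in S(L\llp t\rrp)$, the value group is $\BZ$ (or $\frac1N\BZ$), and $\Delta_x$ is a Presburger-definable family of polytopes. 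Its bounded Euler characteristic is then a definable $\BZ$-valued function of $x$, since the o-minimal Euler characteristic of a uniformly definable family of rational polyhedra is piecewise constant on definable pieces. This places it in $\FP(S)$.

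The main obstacle is the passage between the two languages: $f$ lives in $\ACVF_K$, while the constructible functions live in the Denef--Pas setting over $L\llp t\rrp$. I would handle this by quantifier elimination: the data $\rv(x)$ and $\val(x)$ for $x\in L\llp t\rrp$ are Denef--Pas definable via $\mathrm{ac}$ and $\val$. Hence each $\ACVF$-definable family restricted to points of $L\llp t\rrp$ is uniformly $\mcL_{DP}$-definable, with residue-field and value-group data definable in the appropriate sorts. One also has to check that the ideal $\Isp$ causes no ambiguity in families. This holds because $\FE_b$ is well defined pointwise, so any uniform choice of representative gives the same value.
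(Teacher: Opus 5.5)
Your proposal is correct and follows essentially the same route as the paper: a relative form of the Hrushovski--Kazhdan lifting decomposition (their Proposition 4.5), a further partition into products $Y\times_S\tilde\Delta$ of an $\RES$-part and a pure $\Gamma$-lift (their Corollary 3.25, which you phrase via the tensor decomposition of $\K{\RV[*]}$), and then evaluation of $\FE_b$ factor by factor. The $\RES$ factor gives an element of $\FQ(S)$, and the $\Gamma$ factor gives $\eu_b(\Delta_x)(\eL-1)^s$, with $\eu_b$ constant on the pieces of a definable partition of $S$; your remarks on passing from $\ACVF$ to the Denef--Pas setting and on $\Isp$ cover points the paper leaves implicit.
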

\begin{proof}
By Hrushovski-Kazhdan \cite[Proposition 4.5]{HK}, the fibers of $X$ over $S$ are in definable bijection with lifts of definable subsets of $\RV$. More precisely, there exists for $0\leq i\leq d=\dim_S(Y)$ definable subsets $W_i\subset S\times \RV^\bullet$ with fibers over $S$ admitting finite-to-one projections to $\RV^i$. And there exists a finite definable partition of $X$ into pieces $X_i$ such that the fibers of $X_i$ over $S$ are in definable bijection $h_i\colon X_i \simeq \mathfrak{L}_i(W_i)$ over $S$. Here

\[ \mathfrak{L}_i(W_i)=\set{(x,s,y)\in \VF^i\times W_i\mid \pi_i(y)=\rv(x)}
\]
is the lift of $W_i$, where $\pi_i$ is any finite to one coordinate projection to $RV^i$ on the fibers of $W_i$. So we can replace $X$ by $\mathfrak{L}_i(W_i)$. 

By \cite[Corollary 3.25]{HK}, up to taking a finite definable partition of $W_n$ and working with one of the pieces, we can further assume that $W_i$ is of the form $Y\times_S\tilde\Delta$, where $Y\subset S\times V_{\alpha}$, with $\alpha=(\alpha_1,\dots,\alpha_r)\in \BQ^r$   and $\Delta\subset S\times \Gamma^s$ and $\tilde \Delta=\set{(s,x)\in S\times \RV^s\mid (s,\valrv(x))\in \Delta}$. 

We have $\Vol(f^{-1}(x))=\FE_b([Y_x])\FE_b(\val_rv^{-1}(\Delta_x)$, so it suffices to show that each of those is constructible motivic function. For $\FE_b([Y_x])$, by definition $\FE_b([Y_x]$ is the class of a definable subset of the residue field in bijection with $Y_x$, the definition is uniform in $x$ so we get some definable $Z\subset S\times L^n$ such that $\FE_b([Y_x]=[Z_x]$, showing that it is in $\FC(S)$. For $\FE_b(\val_rv^{-1}(\Delta_x)$, it is by definition equal to $\eu_b(\Delta)(\eL-1)^s$. By cell decomposition, we can take a finite definable partition of $S$ such that $\eu_b$ is constant on each piece, showing that $\FE_b(\val_rv^{-1}(\Delta_x)$ is in $\FC(S)$. 
\end{proof}

Let $f\colon X\to S$ a definable map in $\ACVF_K$ and $\omega\colon X\to \Gamma$ a definable function. 

Assume that $f$ and $\omega$ are described by some quantifier free formulas. Then for each algebraically closed field $L$ and integer $n$, we obtain a map
\[f_n\colon X(L\llp t^{1/n}\rrp) \longrightarrow S(L\llp t^{1/n}\rrp).\]
For a fixed $n$ it is definable in the Denef-Pas language, since it is described by a quantifier-free formula. The map $\omega$ also induces a map $\omega\colon X(L\llp t^{1/n}\rrp)\to \BQ$. We assume that $\omega$ has image in $\frac{1}{n}\BZ=\Gamma(L\llp t^{1/n}\rrp)$. Note that this condition is verified for example if $\omega$ is of the form $v\circ f$, where $f\colon X\to \VF$ is a definable function.

By the Cluckers-Loeser construction, there is a constructible motivic function $\tilde \varphi_n\in \mcC(S(L\llp t^{1/n}\rrp))$ such that for $x\in S(L\llp t^{1/n}\rrp)$, $\varphi_n(x)=\int_{{f_n}^{-1}(x)} \abs{n\omega_n}$. Note that $\tilde \varphi_n$ is in $\mcC(S(L\llp t^{1/n}\rrp))$ and not merely $\mcC_{1/n}(S(L\llp t^{1/n}\rrp))$ since the function $n\omega_n$ take integer values. Let $\varphi_n\in \mcC(S(L\llp t\rrp))$ be the restriction of $\tilde \varphi_n$ to $S(L\llp t\rrp))$.


\begin{definition}
Let $X$ be a definable set and $\omega$ be a definable function $X\to \Gamma$.  Assume that there is a definable bijection of class $C^1$  $h\colon X\to \mathfrak{L}_n(W)$ and a definable map $\omega'\colon W\to \Gamma$  where $\mathfrak{L}_n(W)=\set{(x,y)\in \VF^n\times W\mid \pi(y)=\rv(x)}$, $W$ a definable subset of $\RV^\bullet$ and $\pi$ a finite-to-one projection to $\RV^n$ and such that $\omega(x)=\omega'(h(x))-\val(\mathrm{Jac})(h)(x)$. 

 We say that $\omega$ induces the counting measure on $X$ if for every $w\in W$, we have $\omega'(w)=-\val(\pi(w)$. Note that this determines $\omega$ uniquely

We write $\mu^\#(X(K(\llp t\rrp)=\int_{X(K(\llp t\rrp)} \abs{\omega}$, since it does not depend on $\omega$. 
\end{definition}

The relevant example of such an object for this paper is  when $X$ is a subset of $\GL_d$ invariant under $\GL_d(\FO)$ and $\omega=\val(f)$, where $f$  is a regular function $f$ on $\GL_d$ such that the form $f \wedge_i dx_i$ is invariant under $\GL_d(F)$, with $x_i$ the standard affine coordinates of $\GL_d$. 

\begin{proposition}
\label{prop:defphinx}
With the previous notations, assume further that $\omega$ induces a counting measure on the fibers of $f$ and that the fibers of $f$ are bounded. Then there is a constructible motivic function $\varphi\in \mcC(S\times\BZ_{>0})$ such that for each $n>0$, $\varphi(\cdot,n)=\varphi_n$.
\end{proposition}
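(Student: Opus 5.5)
The plan is to reduce, uniformly in the parameter, to the normal form of Proposition \ref{prop:defvolx} and then to compute the fibre integrals over $L\llp t^{1/n}\rrp$ explicitly as functions of $n$.

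First, apply \cite[Proposition 4.5]{HK} in $\ACVF_K$ to $f\colon X\to S$. After a finite definable partition of $X$, the fibres of $f$ are in $C^1$ definable bijection $h_i$ over $S$ with lifts $\mathfrak{L}_i(W_i)$. Then, by \cite[Corollary 3.25]{HK}, each $W_i$ may be taken of the form $Y\times_S\tilde\Delta$ with $Y\subset S\times V_\alpha$, $\alpha\in\BQ^r$, and $\Delta\subset S\times\Gamma^s$. We may take all these data quantifier-free by quantifier elimination in $\ACVF_K$. Their interpretations in $L\llp t^{1/n}\rrp$ are then definable in the Denef--Pas language for every $n$, and they are given by a single family of formulas.

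Second, since $\omega$ induces the counting measure, the change of variables along $h_i$ gives
\[
\int_{f_n^{-1}(x)}|n\omega_n| \;=\; \sum_i \int_{w\in W_{i,x}(L\llp t^{1/n}\rrp)} \eL^{-n\,\val(\pi(w))}\cdot(\text{volume of } \rv^{-1}(\pi(w))).
\]
Here the Jacobian term cancels by the defining relation $\omega=\omega'\circ h-\val(\mathrm{Jac})(h)$. With our normalisation (the volume of $t^{1/n}L\llb t^{1/n}\rrb$ is $\eL^{-1}$), the volume of a ball $\rv^{-1}(\xi)$ with $\valrv(\xi)=\gamma\in\frac1n\BZ$ is $\eL^{-n\gamma}$ in each coordinate. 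So the factor $\eL^{n\val(\pi(w))}$ coming from the ball cancels the weight $\eL^{-n\val(\pi(w))}$, and each lift point contributes $1$. Hence $\varphi_n(x)$ equals the motivic count of $W_{i,x}$ over $L\llp t^{1/n}\rrp$. By the product decomposition this count is
\[
[Y_x(L\llp t^{1/n}\rrp)]\cdot(\eL-1)^s\cdot\#\bigl(\Delta_x\cap(\tfrac1n\BZ)^s\bigr).
\]
The $\RES$-part $Y_x$ has $\valrv$ fixed equal to $\alpha$. After choosing $n$-th roots of $t$, its residue-field points form a constructible set depending on $n$ only through whether $n\alpha\in\BZ^r$, which is a congruence condition on $n$. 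It is definable uniformly in $(x,n)$, since roots of unity are in $k$ and the residue field is algebraically closed. This gives an element of $\FQ(S\times\BZ_{>0})$.

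Third, and this is the main obstacle, we must handle the lattice-point count $\#(\Delta_x\cap(\frac1n\BZ)^s)$ as a function of $(x,n)$. Here we use boundedness of the fibres: it forces $\Delta_x$ to be bounded, so the count is finite. Write $\Delta_x$ as a $\BQ$-polytope-like Presburger set cut out by linear inequalities with coefficients definable from $x$. Substituting $\gamma=m/n$ with $m\in\BZ^s$ turns the condition into a Presburger-definable subset of $S\times\BZ_{>0}\times\BZ^s$. The Presburger cell decomposition and summation results used by Cluckers--Loeser (\cite[Theorem 14.4.1]{CL-2008} and the underlying results on $\FP$ and summation over $\BZ$) then show that $(x,n)\mapsto\#\{m\}$ lies in $\FP(S\times\BZ_{>0})$. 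We will need some care with the dependence of $\Delta_x$ on $x$ through values in $\Gamma$ and with the interpretation in $\frac1n\BZ$. I would first try to achieve this by rescaling everything by $n$, so that all value-group data become integer-valued definable functions of $(x,n)$.

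Finally, multiply the three factors and sum over the pieces $i$. This gives $\varphi\in\mcC(S\times\BZ_{>0})$ with $\varphi(\cdot,n)=\varphi_n$, after restricting from $S(L\llp t^{1/n}\rrp)$ to $S(L\llp t\rrp)$. The restriction is harmless because all data are $\mu_n$-invariant formulas with parameters in $K$.
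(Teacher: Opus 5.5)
Your reduction via \cite[Proposition 4.5]{HK} and \cite[Corollary 3.25]{HK} matches the paper. So does your treatment of the residue-field factor $Y_x$: it is nonempty exactly when $n\alpha\in\BZ^r$, which is a Presburger condition on $n$. Your use of the counting-measure hypothesis to make the weight independent of $\gamma$ is also the paper's.

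There is a genuine gap in your third step. You claim that substituting $\gamma=m/n$ turns $\Delta_x\cap(\tfrac1n\BZ)^s$ into a Presburger-definable subset of $S\times\BZ_{>0}\times\BZ^s$. This is false. $\Delta_x$ depends on $x$ through $\Gamma$-valued definable parameters $c(x)$, and after rescaling its inequalities read $\langle a,m\rangle\le n\,c(x)$, which is a product of two variables. Take $\Delta_x=[0,\val(x)]$, which has bounded fibres. If $\{(x,n,m): 0\le m\le n\val(x)\}$ were definable, then $\{(a,n,m)\in\BZ^3 : m=na\}$ would be definable in Presburger arithmetic, which it is not. So ``rescaling everything by $n$'' does not produce definable functions of $(x,n)$. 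The quantity $n\,c(x)$ is an element of the ring $\FP(S\times\BZ_{>0})$, but it is not a definable function. It therefore cannot be used to cut out a set to which cell decomposition and summation over $\BZ$ apply. The conclusion still holds in this example, since $g(x,n)=n\val(x)+1$ lies in $\FP$, but your argument does not establish it.

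The missing idea is to separate the $n$-dependence from the $x$-dependence using quasi-polynomiality in the dilation parameter. This is what the paper does.
\begin{itemize}
\item After cell decomposition, each $\Delta_x$ is a rational polytope, and there is an $N$ independent of $x$ with $N\Delta_x$ integral. This uses the bounded denominators of the $\BQ$-valued definable data.
\item By \cite[Theorem 4.6.8]{stanley}, $\sum_{n\geq 1} g(x,n)T^n = g_x(T)/(1-T^N)^{s+1}$ with $\deg g_x\leq N(s+1)$. Hence the coefficients of $g_x$ are fixed integer combinations of $g(x,1),\dots,g(x,N(s+1))$.
\item For each \emph{fixed} integer $i$, the dilate $i\Delta_x$ is Presburger-definable uniformly in $x$, since multiplication by a constant is allowed. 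So $g(\cdot,i)\in\FC(S)$ by summation.
\item The coefficients of $(1-T^N)^{-(s+1)}$ are Presburger-constructible functions of $n$.
\end{itemize}
Thus $g$ is a finite sum of products of elements of $\FC(S)$ and $\FC(\BZ_{>0})$, which gives the required element of $\FC(S\times\BZ_{>0})$. Some input of this Ehrhart type (or its parametric version) is needed beyond Presburger definability of the joint family.
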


\begin{proof}
As in the previous proofs, by \cite[Proposition 4.5]{HK}, the fibers of $X$ over $S$ are in definable bijection with lifts of definable subsets of $\RV$. More precisely, there exists for $0\leq i\leq d=\dim_S(Y)$ definable subsets $W_i\subset S\times \RV^\bullet$ with fibers over $S$ admitting finite-to-one projections to $\RV^i$. And there exists a finite definable partition of $X$ into pieces $X_i$ such that the fibers of $X_i$ over $S$ are in definable bijection $h_i\colon X_i \simeq \mathfrak{L}_i(W_i)$ over $S$. Here

\[ \mathfrak{L}_i(W_i)=\set{(x,s,y)\in \VF^i\times W_i\mid \pi_i(y)=\rv(x)}
\]
is the lift of $W_i$, where $\pi_i$ is any finite to one coordinate projection to $RV^i$ on the fibers of $W_i$. We can also assume that $\omega$ factors through a function on $W_i$, still denoted by $\omega$. 

Since restricting to Henselian subfields commutes with $\ACVF$-definable bijection, we can replace $X$ by $\mathfrak{L}_i(W_i)$ for $i=0,\dots,n$. But $\dim_S(\mathfrak{L}_i(W_i))=i$, so  only $\mathfrak{L}_n(W_n)$ have non-zero volume, so the only piece we need to consider is $\mathfrak{L}_n(W_n)$. 

By \cite[Corollary 3.25]{HK}, up to taking a finite definable partition of $W_n$ and working with one of the pieces, we can further assume that $W_n$ is of the form $Y\times_S\tilde\Delta$, where $Y\subset S\times V_{\alpha}$, with $\alpha=(\alpha_1,\dots,\alpha_r)\in \BQ^r$   and $\Delta\subset S\times \Gamma^s$ and $\tilde \Delta=\set{(s,x)\in S\times \RV^s\mid (s,\valrv(x))\in \Delta}$. 

By orthogonality between residue field and value group, up to taking a finite definable partition of $Y$, the ``volume form" function $\omega$ on $W_n=Y\times_S\tilde\Delta$ can be assumed to only depend on $\Delta$. 

The integral can now be computed explicitly as a product of an integral of a lift of $Y$ and of a lift of $\Delta$.

For the first, observe that $Y(L\llp t^{1/n}\rrp)=\emptyset$ if $\alpha\notin \frac{1}{n}\BZ^r$. When $\alpha\in\frac{1}{n}\BZ^r$, the restriction of $Y(F\llp t^{1/n}\rrp)$ to $S(L\llp t)$ is independent of $n$ and is in definable bijection with some subset of $S\times L^\bullet$. This defines an element $\psi$ in the relative Grothendieck group of varieties $\FQ(S)$. Let $A$ be the subset of $n\in\BZ_{>0}$ such that $\alpha\in\frac{1}{n}\BZ^r$ which is definable in the Presburger language, and $\un_A$ its characteristic function, viewed as an element of $\FC(S\times\BZ_{>0})$. Then the integral we are considering is equal to $\psi \un_A \eL^{-s_0}$, so is an element of $\FC(S\times\BZ_{>0})$.
 
We now show that the integral over the lift of $\Delta$ is represented by an element of $\FC(S\times\BZ_{>0})$. For $x\in S(L\llp t\rrp)$ and $n\in \BZ_{>0}$, the integral we need to compute is equal to 
\[
\varphi_n(x)=(\eL-1)\sum_{\gamma\in \Delta_{x}\cap (\frac{1}{n}\BZ)^s}\eL^{-n\omega(x,\gamma)+nl(\gamma)}
\]
where $\Delta_{x}$ is the fiber of $\Delta$ over $x$ and $l(\gamma)=\gamma_1+\dots\gamma_s-s$.

By assumption, $\eL^{-n\omega(x,\gamma)+nl(\gamma)}$ depends only on $x$ and not $\gamma$, so all it remains to show is that the function mapping $(x,n)$ to the number of $\frac{1}{n}\BZ$-points of $\Delta_x$ is a constructible motivic function. Denote by $g(x,n)$ this function. Since the fibers of $f$ are bounded, $\Delta$ is bounded as well so $g(x,n)$ takes finite values. 

By cell decomposition, up to taking a finite partition of $\Delta$, we can assume that $\Delta_x$ is a convex rational polytope. Furthermore there exists some $N$ independent of $x$ such that $N\Delta$ is an integral convex polytope. 

By  \cite[Theorem 4.6.8]{stanley}, $\sum_{n\geq 1} g(x,n)T^n$ is a rational function with denominator $(1-T^N)^{s+1}$ and numerator a polynomial $g_x(T)$ of degree $N(s+1)$. It follows that the coefficients of $g_x(T)$ can be expressed as a linear combination of $g(x,i)$ with $1\leq i\leq N(s+1)$, each of which is then a constructible motivic function in $\FC(S)$. Expanding $\frac{1}{(1-T^N)^{s+1}}$ as a power series, we find that its coefficients are polynomial in $n$ hence determine a constructible motivic function in $\FC(\BZ_{>0})$. We have thus expressed $g(x,n)$ as a product of two constructible motivic functions, hence it is a constructible motivic function. \end{proof}

Recall from \cite[Definition 6.2]{FY} that a definable set $Z\subset \VF^n$ is said to be proper invariant if it is bounded and there is some $\alpha\in \Gamma$ such that $Z$ is a (possibly infinite) union of balls of radius $\alpha$. 
\begin{proposition}
\label{prop:vollimeq}
Let $f\colon X\to S$ a definable map in $\ACVF_K$ and $\omega\colon X\to \Gamma$ a definable function. 
Assume that the fibers of $f$ are proper invariant and that $\omega$ induces the counting measure on the fibers of $f$. 

Then for every $x\in S(L\llp t\rrp)$, the sum $\sum_{n\geq 1} \varphi_n(x)T^n$ is a rational function of degree zero, hence its limit as $T\to +\infty$ exists formally and we have
\[
\Vol(f^{-1}(x))=-\lim_{T\to +\infty}\sum_{n\geq 1} \varphi_n(x)T^n.
\]
Here $\varphi_n(x)=\mu^\#(f_n^{-1}(x))=\int_{f_n^{-1}(x)}\abs{\omega}$ as in Proposition~\ref{prop:defphinx}.
\end{proposition}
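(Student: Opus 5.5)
We fix $x\in S(L\llp t\rrp)$ and reduce to the situation already analysed in the proof of Proposition~\ref{prop:defphinx}. By \cite[Proposition 4.5]{HK} and \cite[Corollary 3.25]{HK}, after a finite definable partition of the fiber $f^{-1}(x)$, each piece is in definable bijection with a lift $\mathfrak{L}_n(W)$, where $W=Y\times\tilde\Delta$, $Y\subset V_\alpha$ is a $\RES$-object and $\Delta\subset\Gamma^s$ is a bounded definable set. Both sides of the desired identity are additive over such a partition. For the right hand side this is because $\varphi_n$ is an integral. For the left hand side this is because $\Vol=\FE_b\circ\int$ is a ring morphism on $\K{\VF_K}$. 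So it suffices to treat one piece of the form $\mathfrak{L}(Y\times\tilde\Delta)$.

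Since $\omega$ induces the counting measure, the $\eL$-power in the formula for $\varphi_n(x)$ from the proof of Proposition~\ref{prop:defphinx} cancels. Each $\gamma$ then contributes $(\eL-1)^s$ times the class of $Y$, counted only when $\alpha\in\frac1n\BZ^r$. Hence
\[
\varphi_n(x)=[Y]\,\un_A(n)\,(\eL-1)^s\, g(n),\qquad g(n)=\#\bigl(\Delta\cap(\tfrac1n\BZ)^s\bigr).
\]
On the other side, $\Vol$ of the piece equals $[Y](\eL-1)^s\eu_b(\Delta)=[Y](\eL-1)^s\eu(\Delta)$, the last equality because $\Delta$ is bounded. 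This reduces the statement to a purely combinatorial identity. Let $\Delta$ be a bounded definable subset of $\BQ^s$, and let $m$ be such that $\alpha\in\frac1m\BZ^r$, so that $A=m\BZ_{>0}$. Then we need
\[
-\lim_{T\to\infty}\sum_{n\in m\BZ_{>0}} g(n)T^n=\eu(\Delta),
\]
and the series must be rational of degree zero.

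By o-minimal cell decomposition in the Presburger/$\BQ$-vector space structure, $\Delta$ is a finite disjoint union of relatively open rational polytopes $P$. Both sides are additive, and $\eu$ of a relatively open bounded polytope of dimension $e$ is $(-1)^e$. For a closed rational polytope, Ehrhart theory \cite[Theorem 4.6.8]{stanley} gives a quasi-polynomial counting function $g$. Ehrhart--Macdonald reciprocity then gives the formal identity
\[
\sum_{n\ge1}g_P(n)T^n=-\sum_{n\ge 0}g_P(-n)T^{-n},
\]
as rational functions, applied to the dilation variable restricted to a subprogression $m\BZ$. Taking $T\to\infty$ on the right hand side picks out the $n=0$ term, so the limit equals $-g_P(0)$. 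For a relatively open polytope, the constant term of the Ehrhart quasi-polynomial is $g_P(0)=(-1)^{\dim P}=\eu(P)$. The same holds for the closed polytope, with $g_P(0)=1$. The rational function has degree zero because its limit is finite. In fact one can check degree zero directly: the denominator is $(1-T^N)^{s+1}$ and the numerator has degree at most $N(s+1)$. The quasi-polynomial nature of $g$ is also compatible with restricting to $n\in m\BZ$, since we may substitute $T^m$ and rescale $\Delta$ by $m$.

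The main obstacle is the uniform handling of rational, not integral, polytopes and of the progression $A$. The constant term of an Ehrhart quasi-polynomial at $n=0$ is only guaranteed to be the Euler characteristic on the correct residue class. I would first dilate so that all vertices are in $\frac1N\BZ^s$ with $m\mid N$. I would then verify that the limit formalism of the previous subsection depends only on the residue class $0 \bmod N$, where the quasi-polynomial agrees with the genuine Ehrhart polynomial of the integral polytope $N\Delta$. A second, minor point is checking that the properness of the fibers ensures $\Delta$ is bounded and that no lower-dimensional lifts contribute. Lower-dimensional pieces should contribute zero to both sides. On the left this holds because they are $\mathfrak{L}_i$ with $i<n$ and have measure zero. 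On the right it holds because $\omega$ defines a counting measure.
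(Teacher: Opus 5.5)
Unlike the paper, you prove the identity by hand. The paper uses Propositions~\ref{prop:defvolx} and~\ref{prop:defphinx} only for constructibility and takes the identity itself from \cite[Theorem 8.11]{FY}. Your Ehrhart step is sound. For a quasi-polynomial $g$ one has $\sum_{n\geq 1}g(n)T^n=-\sum_{n\leq 0}g(n)T^n$ as rational functions, so $-\lim_{T\to+\infty}$ returns the value at $0$ of the constituent on the class $0 \bmod N$. After decomposing $\Delta$ into relatively open rational cells, that value is $\eu(\Delta)$ by Ehrhart--Macdonald reciprocity, and inserting $\un_A$ does not change it once $m\mid N$.

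The gap is the reduction to a single bounded, full-dimensional piece. $\Vol=\FE_b\circ\int$ is an Euler characteristic on $\K{\VF_K}$, not a measure: a point goes to $[\mathrm{pt}]_0\in\RES_K[0]$ and has $\Vol=1$. So the lifts $\mathfrak{L}_i(W_i)$ of dimension $i<\dim f^{-1}(x)$ produced by \cite[Proposition 4.5]{HK} contribute on the $\Vol$ side but not on the integral side. Your assertion that they contribute zero to both sides is false on the $\Vol$ side; neither ``measure zero'' nor the counting-measure hypothesis says anything about $\Vol$ of a lower-dimensional set. In Proposition~\ref{prop:defphinx} they may be discarded only because the integral side alone is computed there. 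Two further claims also fail. Proper invariance does not make $\Delta$ bounded. And the counting-measure hypothesis refers to one specific bijection, so it need not make the $\eL$-powers cancel for the decomposition you chose.

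The closed unit disc $X=\FO$ shows all three problems at once. The obvious decomposition of the type in \cite[Proposition 4.5]{HK} is $\{0\}\sqcup\mathfrak{L}_1(\tilde\Delta)$ with $\Delta=\Gamma_{\geq 0}$, which is unbounded. The point has $\Vol=1$ but measure $0$. The measure giving weight $1$ to each residue disc $\{y\colon \val(y-a)>0\}$ gives the disc $\rv^{-1}(r)$ weight $\eL^{-n\valrv(r)}$ over $L\llp t^{1/n}\rrp$, so it is not a counting measure for this decomposition. The identity therefore fails piece by piece and holds only after summing.

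To repair this, do not start from \cite[Proposition 4.5]{HK}. Start from the bijection $h\colon f^{-1}(x)\to\mathfrak{L}_d(W)$, with $d=\dim f^{-1}(x)$, that the hypothesis on $\omega$ supplies. It is a single full-dimensional lift, so $\Vol(f^{-1}(x))=\FE_b([W]_d)$. Moreover $\varphi_n(x)$ is the motivic count of $W(L\llp t^{1/n}\rrp)$, since $h$ respects $L\llp t^{1/n}\rrp$-points because that field is Henselian. Then partition $W$ only inside the $\RV$-sort into pieces $Y\times\tilde\Delta$; this creates no lower-dimensional $\VF$-pieces and keeps the measure a counting measure. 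Deduce boundedness of each $\Delta$ from the finiteness of $\varphi_n(x)$, not from boundedness of the fiber: an unbounded $\Delta$ has infinitely many $(\frac1n\BZ)^s$-points for suitable $n\in A$. With these changes your computation becomes a self-contained proof of the special case of \cite[Theorem 8.11]{FY} needed here. The paper's citation covers arbitrary proper invariant sets directly.
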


\begin{proof}
By Propositions~\ref{prop:defphinx} and~\ref{prop:defvolx}, the inputs of both sides are constructible motivic functions. For each $x$, the result follows from \cite[Theorem 8.11]{FY}.
\end{proof}

\subsection{A few notions around algebraic stacks} Throughout this text we consider an algebraic stack $\mcM$ of finite presentation over a base scheme $B$, where typically we will be interested in $B=\Spec(k\llb{t}\rrb)$ with $k$ a field of characteristic $0$. We always assume $\mcM$ to have affine stabilizers and separated diagonal. For any $T/B$ we write $\mcM(T)$ for both the groupoid of $T$-points and its set of isomorphism classes. 

\subsubsection{cd-quotient stacks}
The following condition will essentially allow us to reduce questions about motivic integration on $\mcM$ to quotient stacks.

\begin{definition}\label{def:cdstack}\cite{cdstack} A algebraic stack $\mcM$ is a cd-quotient stack if it admits a Nisnevich covering $\tau:[X/\GL_d] \to \mcM$. By this we mean that $X/B$ is a scheme, $\tau$ a representable morphism and the pullback along any $T \to \mcM$ is a Nisnevich covering.
\end{definition}

In \cite{cdstack} a list of examples of cd-quotient stacks is given. In particular any $\mcM$ admitting a good moduli space is a cd-quotient stack by \cite[Theorem 6.1]{AHR19}.

\subsubsection{$\FS$-completeness}
The notion of $\FS$-completeness \cite[Section 3.5]{AHH} of a morphism $f:\mcM \to \FN$ will allow us to compute the Hrushovski-Kazhdan volume of the fibers of $f$, see Section \ref{hkfib}.

For any DVR $R$ with uniformizer $t$, let
\[ 
\oST_{R}=[\Spec( R[S,T]/(ST-t)) /\BG_m],
\]
where $\BG_m$ acts on variables $S$ and $T$ with weights $1$ and $-1$ respectively. 

\begin{definition} A morphism $f:\mcM \to \FN$ between algebraic stacks is $\FS$-complete with respect to $R$ if for any morphism $\oST_{R} \to \FN$ which lifts to $\oST_{R}\setminus 0 \to \mcM$ there exists a unique extension $\oST_{R} \to \mcM$. 
\end{definition}

By \cite[Proposition 3.44]{AHH} any good moduli space map $\mcM \to M$, with $M$ separated, is $\FS$-complete with respect to any $R$. 

\subsubsection{Motivic classes of stacks}\label{kloc} Assume $S=\Spec(K)$ for $K$ a field of characteristic $0$. To any finite type $K$-stack $\mcM$ with affine stabilizers one can associate a class $[\mcM]$ in the localized Grothendieck group of varieties 
\[ \Ko(\Var_K)_{loc}=\Ko(\Var_K)[\BL^{-1},(\BL^n-1)^{-1}; \ n\geq 1].\]
 The ring $\Ko(\Var_K)_{loc}$ has been identified with the Grothendieck ring of $K$-stacks in \cite{Ek09} and $[\mcM]$ is simply the class of $\mcM$. To actually compute the class, one first stratifies $[\mcM]$ by quotient stacks \cite[Proposition 3.5.9]{Kr99} and then writes each stratum $\mcM_i$ as a quotient $[Y_i/\GL_{n_i}]$. We then have
\[ [\mcM] = \sum_i [Y_i][\GL_{n_i}]^{-1}.  \]

\begin{rmk}\label{munt} With this definition we have that the class of the classifying space $B\mu_N$ equals $1$ in $\Ko(\Var_K)_{loc}$, as we have an equivalence $B\mu_N \cong [\BG_m/\BG_m]$, with $\BG_m$ acting on itself with weight $N$.
\end{rmk}

\subsubsection{Cyclotomic inertia and weights}\label{sec:weight}

Let $\hat\mu$ be the inverse limit of the group of roots of unity $\{\mu_N\}_{N \geq 1}$. Given an algebraic stack $\mcM$ we define its twisted or cyclotomic inertia stack $I_{\hat{\mu}}\mcM$  as
\[I_{\hat{\mu}}\mcM = \Hom(B\hat{\mu},\mcM) = \colim_N \Hom(B\mu_N\mcM).  \]

If $\mcM$ is smooth over a field $K$ we can define a natural weight function  
\[  w:I_{\hat{\mu}}\mcM(K)\longrightarrow \BQ  \]
as follows:

First, consider a representation $\rho$ of $\hat{\mu}$ on a finite dimensional $K$ vector space $V$. Then $\rho$ splits as a direct sum of characters $\chi_1,\dots,\chi_{\dim V} \in \mathrm{Irr}_{\hat{\mu}} = \BQ/\BZ$. For $1\leq i\leq \dim V$ we write $c_i$ for the unique representative of $\chi_i$ satisfying $0<c_i\leq 1$ and define 
\[w(\rho) = \sum_i c_i.\]

 Now a point $y\in I_{\hat{\mu}}\mcM(K)$ consists of a pair $(x,\phi)$ with $x\in \mcM(K)$ and $\phi:\hat{\mu} \to \Aut(x)$. By smoothness of $\mcM$ and functoriality, the tangent complex $T_x\mcM$ of $\mcM$ at $x$ is quasi-isomorphic to a two term complex $\rho_{-1} \to \rho_0$ of $\hat{\mu}$-representations and we define 
\[ w(y) = w(\rho_0) - w(\rho_{-1}). \]
Notice that if $\mcM \cong [X/G]$ is a global quotient we have
\[ T_x\mcM \cong [ \Lie(G) \to T_{\tilde{x}}X],\]
where $\tilde{x}\in X$ is any lift of $x$ and the morphism is the derivative of the action map.

\section{Volumes of twisted arcs}
\label{sec:voltwisted}

\subsection{Equivariant arcs}\label{eqsec} Let $k$ be a field of characteristic $0$ containing all roots of unity. Let $G$ be a linear algebraic group over $k\llb{t}\rrb$ acting on the right on a separated finite type $k\llb{t}\rrb$-scheme $X$. Fix an integer $N \geq 1$. In what follows we write $\mu_N$ for both the Galois-group of $k\llp{t^{1/N}}\rrp /k\llp{t}\rrp$ and for the constant group scheme over $k\llb{t}\rrb$.

\begin{definition} 
For $N\geq 1$ and $\phi: \mu_N \to G$ a $k\llb{t}\rrb$-homomorphism the $\phi$-equivariant arcs of $X$ are
\[  X(k\llb{t^{1/N}}\rrb)^\phi = \{x \in X(k\llb{t^{1/N}}\rrb) \ |\ \forall \xi \in \mu_N \colon  x^\xi = x\phi(\xi)  \}. \]
For any constructible subset $Z \subset X_k$ we write $X(k\llb{t^{1/N}}\rrb)^\phi_Z$ for the space of arcs whose closed point lies in $Z$. 
\end{definition}
Notice that  for any $\xi \in \mu_N$ and any $x\in X(k\llb{t^{1/N}}\rrb)$ we have $(x^\xi)_{|k} = x_{|k}$ and hence $X(k\llb{t^{1/N}}\rrb)^\phi_Z=\emptyset$ unless $Z \cap  X^\phi \neq \emptyset$.

Since the action of $G$ on $X$ is algebraic, the condition of being $\phi$-equivariant is definable and thus the assignment
\[ X_N^\phi \colon K/k \longmapsto  X(K\llb{t^{1/N}}\rrb)^\phi, \] 
is definable as well. 

Now assume that $X/k\llb{t}\rrb$ is smooth and irreducible. We fix a relative $G$-invariant $\BQ$-divisor $D= \sum_{i\in I} u_i D_i$ with $u_i < 1$ and simple normal crossing support. This data defines a measure $|\omega_{D,orb}|$ on the quotient $X/\mu_N$, where $\mu_N$ acts on $X$ through $\phi$. Now for every $K/k$ there is a finite map $X(K\llb{t^{1/N}}\rrb)^\phi \to X/\mu_N(K\llb{t}\rrb)$ and we define the measure $|\omega_D|$ on $X^\phi_N$ as the pullback of $|\omega_{D,orb}|$.


\begin{proposition}\label{equivol} For any $\phi: \mu_N \to G$ and any $x \in X^\phi(k)$ we have
\[\int_{X^\phi_{N,x}} |\omega_D|= \BL^{-w(T_xX)}\prod_{i\in I_x} \frac{(1-\BL^{-1})\BL^{u_iw(\FN_{X/D_i,x})}}{1-\BL^{-1+u_i}}, \]
where $I_x = \{i \in I \ |\ x \in D_i\}$ and $\FN_{X/D_i,x}$ denotes the fiber at $x$ of the normal bundle of $D_i$ in $X$. We consider both $T_xX$ and $\FN_{X/D_i,x}$ as $\hat\mu$-representations via $\phi$. 
\end{proposition}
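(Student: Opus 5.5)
Since $\phi$ factors through the finite group $\mu_N$ and $x\in X^\phi(k)$, I first linearize the situation at $x$. The group $\mu_N$ is linearly reductive in characteristic zero, so I choose étale coordinates $y_1,\dots,y_d$ on a $\mu_N$-stable neighbourhood of $x$ with three properties: each $y_j$ is a $\mu_N$-eigenfunction, each component $D_i$ with $i\in I_x$ is cut out by one coordinate $y_i$, and the $\mu_N$-action on the $y_j$ diagonalizes $T_xX$. The divisor $D$ is $G$-invariant and has simple normal crossings, so each $D_i$ through $x$ is $\mu_N$-stable. Its defining equation can therefore be taken to be an eigenfunction, and the character of $y_i$ is then the character of $\FN_{X/D_i,x}$. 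An étale $\mu_N$-equivariant map to affine space $\BA^d$ with a diagonal linear action identifies the arcs based at $x$ on both sides, preserving the forms. I therefore reduce to $X=\BA^d$ over $k\llb t\rrb$, with $\mu_N$ acting diagonally through characters $\xi\mapsto\xi^{a_j}$ with $0\le a_j<N$, and with $D=\sum_{i\in I_x}u_i\{y_i=0\}$.

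Next I parametrize the equivariant arcs explicitly. The condition $y^\xi=y\phi(\xi)$ forces $y_j(t^{1/N})\in t^{c_j}K\llb t\rrb$, where $c_j=a_j/N$. I must check the sign convention so that this matches the weight $w$ defined with representatives in $(0,1]$: the coordinates with trivial character have $c_j=0$ or $1$, and requiring the arc to be based at $x$ forces vanishing at $t=0$. This is what yields $\BL^{-w(T_xX)}$. So $X^\phi_{N,x}\cong\prod_j t^{c_j'}K\llb t\rrb$ with $c_j'\in(0,1]$ equal to the weight representatives, and with $y_j=t^{c_j'}z_j$, $z_j\in K\llb t\rrb$. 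The quotient map $X\to X/\mu_N$ is written in the invariant coordinates $z_j$ together with $t$. The key computation is then to express the pullback of $|\omega_{D,orb}|$ in terms of the $z_j$, which amounts to the change of variables $y_j=t^{c_j'}z_j$. By \cite[Lemma 7.2]{Ya17} and the defining property of $E$, the pullback of $\omega_{D,orb}$ is $|dy_1\wedge\cdots\wedge dy_d|\prod_i|y_i|^{-u_i}$, up to normalizing on the base $t$-variable. Substituting gives
\[ |t|^{\sum_j c_j'}\prod_{i\in I_x}|t^{c_i'}z_i|^{-u_i}\,|dz|. \]
The exponent $|t|^{\sum c_j'}$ contributes $\BL^{-w(T_xX)}$. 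Each divisor factor contributes $\BL^{u_iw(\FN_{X/D_i,x})}$ times $\int_{tK\llb t\rrb}|z_i|^{-u_i}|dz_i|$ or $\int_{K\llb t\rrb}$, according as $c_i'=1$ or $c_i'<1$. The coordinates not in $I_x$ give volume $1$ after normalization.

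The remaining one-variable integrals are computed by summing over valuations. I use $\int_{\val z=m}=(1-\BL^{-1})\BL^{-m}$, so that
\[ \sum_{m\ge0}(1-\BL^{-1})\BL^{-m(1-u_i)}=\frac{1-\BL^{-1}}{1-\BL^{-1+u_i}}. \]
The condition $u_i<1$ ensures convergence. I then need to reconcile the cases $c_i'=1$ and $c_i'=0$, showing that the choice of representative in $(0,1]$ accounts exactly for the shift of the summation start. This bookkeeping is the step I expect to be delicate.

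The main obstacle is the normalization. I need to track the $N$-fold ramification and the pullback of the orbifold form from $X/\mu_N$ precisely, so that no stray factor $\BL^{\pm(N-1)}$ appears and the weight representatives in $(0,1]$ (rather than $[0,1)$) come out correctly. I would first verify the formula for $d=1$, $D=0$, with a nontrivial character, and use that case to fix the conventions.
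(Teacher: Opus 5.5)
Your proposal follows the paper's proof. You linearize at $x$ so that $\mu_N$ acts diagonally and the $D_i$, $i\in I_x$, are coordinate hyperplanes. You identify $X^\phi_{N,x}$ with $\BA^d(K\llb t\rrb)$ via $y_j=t^{c'_j}z_j$, $c'_j\in(0,1]$, which is exactly the paper's map $\rho$. You pull back $q^*\omega_{D,orb}=(\mathrm{unit})\cdot\prod_i y_i^{-u_i}\,dy_1\wedge\dots\wedge dy_d$ and sum over valuations.

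The step you call delicate is in fact routine, but as written it contains a slip. For $c'_i=1$ you integrate $z_i$ over $tK\llb t\rrb$. This counts the vanishing at $t=0$ twice, since it is already encoded in the factor $t^{c'_i}=t$, and it introduces a spurious factor $\BL^{-1+u_i}$. With the parametrization you set up yourself, every $z_j$ ranges over all of $K\llb t\rrb$ and every sum over $\val(z_i)$ starts at $0$. Each $i\in I_x$ then contributes $\BL^{u_ic'_i}\frac{1-\BL^{-1}}{1-\BL^{-1+u_i}}$ uniformly. The alternative bookkeeping $c_i=0$, $z_i\in tK\llb t\rrb$ gives the same result, because $\int_{tK\llb t\rrb}|z|^{-u}|dz|=\BL^{-1+u}\int_{K\llb t\rrb}|z|^{-u}|dz|$; you just cannot use both shifts at once.

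Likewise no stray factor $\BL^{\pm(N-1)}$ can appear. The measure on $X^\phi_N$ is by definition the pullback of $|\omega_{D,orb}|$, and $z\mapsto q(t^{c'}z)$ is a morphism over $K\llb t\rrb$. So only the ordinary change of variables on $\BA^d(K\llb t\rrb)$ is used, never the normalization of the measure on $K\llp t^{1/N}\rrp$-points. For example, take $d=1$, $\gcd(a,N)=1$ and $v=y^N$. Then $v^{-(N-1)}(dv)^{\otimes N}$ pulls back along $v=t^az^N$ to $N^Nt^a(dz)^{\otimes N}$, giving $\BL^{-a/N}$ as expected.

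One logical point: $\mu_N$-stability of each $D_i$ through $x$ does not follow from $G$-invariance of $D$ plus the snc condition. For instance, a $\mu_2$ swapping the coordinates of $\BA^2$ permutes the two axes. Stability is implicit in the statement, since $\FN_{X/D_i,x}$ must be a $\hat\mu$-representation. It holds, for instance, when $G$ is connected, as for $\GL_d$ in the paper's application.
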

\begin{proof} Since $X$ is smooth and $x \in X^\phi(k)$ we can linearize the action of $\mu_N$ on $X$ around $x$ such that in local coordinates $x_1,\dots,x_n$ the action is diagonal and $I_x$ is a subset of $\{1,\dots,n\}$ i.e.  $D_i$ is given by the vanishing of $x_i$ for all $i\in I_x$. If we choose representatives of the weights $w_1,\dots,w_n \in \frac{1}{N}\BZ \cap (0,1]$, we get a definable bijection
\begin{align*}\rho\colon \BA^n(k\llb{t}\rrb) &\rightarrow X(k\llb{t^{1/N}}\rrb)^\phi_x \\
(y_1,\dots,y_n) &\mapsto (t^{w_1}y_1,\dots,t^{w_n}y_n). \end{align*}
If we write $q\colon X \to X/\mu_N$, then we have by definition and the change of variables formula

\[ \rho^*|\omega_D| = (\rho\circ q)^*|\omega_{D,orb}| = \prod_{i \in I_x}|t^{-u_i(w_i+\val(y_i))+w_i}dy_1\wedge \dots \wedge dy_n|. \]

Thus
\begin{align*}\int_{X(k\llb{t^{1/N}}\rrb)^\phi_x}|\omega_D|&=\BL^{-w(\phi,x)} \int_{\BA^n(k\llb{t}\rrb)} \prod_{i\in I_x}\BL^{u_i(w_i+\val(y_i))}|dy_1\wedge \dots \wedge dy_n|\\
& = \BL^{-w(\phi,x)}\prod_{i\in I_x} \frac{(1-\BL^{-1})\BL^{u_iw_i}}{1-\BL^{-1+u_i}},\end{align*}
which ends the proof.
\end{proof}

\subsection{Twisted arcs}

Let $\mcM$ be an algebraic stack over $k\llb{t}\rrb$. 

\begin{definition} A twisted arc of $\mcM$ over $k\llb{t}\rrb$ is a $k\llb{t}\rrb$-morphism 
\[D^{1/N}=  D_k^{1/N} = [\Spec(k\llb{t^{1/N}}\rrb)/\mu_N] \longrightarrow \mcM.  \]
\end{definition}
Notice that the restriction to the closed point of $D^{1/N}$ induces a map 
\[e_N:  \mcM(D^{1/N}) \longrightarrow I_{\mu_N}\mcM(k) = \Hom_k(B\mu_N,\mcM).\]
If $\mcM = [X/G]$ is a quotient stack we may further compose $e_N$ with $\mcM \to BG$ to obtain a map 
\begin{equation}\label{phitype}\overline{e}_N: \mcM(D^{1/N})  \to I_{\mu_N}BG \cong [\Hom_k(\mu_{N},G)/G].  \end{equation}
In order to describe the fibers of $\overline{e}_N$ we need the following

\begin{lemma}\label{galco} Assume $G$ is a reductive group scheme over $k\llb{t}\rrb$. 
\begin{enumerate}

\item  The inclusion $G(k\llb{t}\rrb) \to G(k\llb{t^{1/N}}\rrb)$ induces a bijection of non-abelian group cohomology
\[H^1(\mu_N,G(k\llb{t^{1/N}}\rrb))^0 \cong H^1(\mu_N, G(k\llb{t}\rrb)) \cong \Hom_k (\mu_r,G)/G,\]
where $H^1(\mu_N,G(k\llb{t^{1/N}}\rrb))^0$ denotes the inverse image of the trivial cocycle under $H^1(\mu_N,G(k\llb{t^{1/N}}\rrb)) \to H^1(\mu_N,G(k\llp{t^{1/N}}\rrp))$.

\item Assume further $H^1(k,G_k) = H^1(k\llp{t}\rrp,G) =\{1\}$, let $\mcM = [X/G]$ be a quotient stack and $\phi:\mu_N \to G$. Then  we have
  \[ \overline{e}_N^{-1}(\phi) = X(k\llb{t^{1/N}}\rrb)^\phi / G(k\llb{t^{1/N}}\rrb)^\phi. \]

\end{enumerate}
\end{lemma}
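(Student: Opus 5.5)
Part (1) comes first. The Galois group $\mu_N$ acts trivially on $G(k\llb{t}\rrb)$, so $H^1(\mu_N,G(k\llb{t}\rrb))=\Hom(\mu_N,G(k\llb{t}\rrb))/\text{conj}$. I would show that every homomorphism $\mu_N\to G(k\llb{t}\rrb)$ is $G(k\llb{t}\rrb)$-conjugate to one landing in $G(k)$, via $G(k\llb{t}\rrb)\to G(k)$. The reason is that $\mu_N$ is linearly reductive, so homomorphisms from it are rigid: the deformation obstructions lie in $H^1(\mu_N,\mathfrak g\otimes t^m k)=0$, and successive approximation modulo $t^m$ followed by completeness handles the inductive step. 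Injectivity of $\Hom_k(\mu_N,G)/G\to H^1(\mu_N,G(k\llb{t}\rrb))$ follows the same way: two conjugate homomorphisms into $G(k)$ are conjugate by some $g\in G(k\llb{t}\rrb)$, and reducing mod $t$ makes them conjugate by $g(0)\in G(k)$.

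Still in part (1), the first map $H^1(\mu_N,G(k\llb{t}\rrb))\to H^1(\mu_N,G(k\llb{t^{1/N}}\rrb))^0$ needs to be a bijection. I would argue this with Bruhat--Tits theory. A cocycle $c$ with values in $G(k\llb{t^{1/N}}\rrb)$ that becomes trivial over $k\llp{t^{1/N}}\rrp$ has the form $c_\xi=g^{-1}\,{}^\xi g$ with $g\in G(k\llp{t^{1/N}}\rrp)$. Such cocycles are then classified by $\mu_N$-fixed points $g\cdot x_0$ in the building over $k\llp{t^{1/N}}\rrp$, or equivalently by $G(k\llp{t}\rrp)$-orbits of $\mu_N$-invariant hyperspecial-type vertices over the tame extension, modulo $G(k\llb{t^{1/N}}\rrb)$. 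An alternative, more elementary route is to twist $G$ by $c$. This gives a form $G'$ of $G_{k\llb{t}\rrb}$ which is a smooth reductive group scheme over $k\llb{t}\rrb$, namely the Weil-restriction fixed points, and which is isomorphic to $G$ over $k\llp{t}\rrp$. Its special fiber is $G_k$ twisted by the homomorphism $\phi$ obtained from $c$ modulo $t^{1/N}$. I expect making this step fully rigorous to be the main obstacle; the tame descent needed, à la Edixhoven, is delicate. I would try first to reduce directly to $\phi=c\bmod t^{1/N}$ using the rigidity above, applied over $k\llb{t^{1/N}}\rrb$ with the semilinear action.

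For part (2), a point of $\overline e_N^{-1}(\phi)$ is a $\mu_N$-equivariant $G$-torsor $P$ over $\Spec k\llb{t^{1/N}}\rrb$ together with an equivariant map $P\to X$, and whose type at the closed point is $\phi$. Since $H^1(k,G_k)=1$ and $G$ is smooth, Hensel's lemma makes $P$ trivial, so $P\cong G$. The equivariant structure is then a cocycle in $H^1(\mu_N,G(k\llb{t^{1/N}}\rrb))$. This cocycle lies in the $^0$ part, because the descended generic torsor is trivial by $H^1(k\llp{t}\rrp,G)=1$. By part (1) it is cohomologous to the constant cocycle $\phi$ with the given type. Fixing that trivialization, the map to $X$ becomes a point $x\in X(k\llb{t^{1/N}}\rrb)$ with $x^\xi=x\phi(\xi)$. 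The remaining automorphisms preserving the cocycle are exactly the elements $g\in G(k\llb{t^{1/N}}\rrb)$ with ${}^\xi g=\phi(\xi)^{-1}g\phi(\xi)$, which is $G(k\llb{t^{1/N}}\rrb)^\phi$. This gives the stated quotient.
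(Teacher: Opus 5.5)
\textbf{There is a genuine gap in part (1).} Your part (2) follows the paper's argument, and it is more explicit than the paper about where $H^1(k\llp t\rrp,G)=1$ enters and why the stabilizer is $G(k\llb t^{1/N}\rrb)^\phi$. Your rigidity argument for $H^1(\mu_N,G(k\llb t\rrb))\cong\Hom_k(\mu_N,G)/G$ is also correct.

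The gap is the core of (1). You never prove that $H^1(\mu_N,G(k\llb t\rrb))\to H^1(\mu_N,G(k\llb t^{1/N}\rrb))^0$ is a bijection; you only list possible routes. You also never check that this map lands in the ${}^0$ part at all, i.e.\ that a homomorphism $\phi\colon\mu_N\to G(k)$ becomes a coboundary in $G(k\llp t^{1/N}\rrp)$. Since (2) invokes (1), it inherits the gap.

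The paper handles this step as follows. The exact sequence of pointed sets for $G(k\llb t^{1/N}\rrb)\subset G(k\llp t^{1/N}\rrp)$ identifies $H^1(\mu_N,G(k\llb t^{1/N}\rrb))^0$ with the $G(k\llp t\rrp)$-orbits on $(G(k\llp t^{1/N}\rrp)/G(k\llb t^{1/N}\rrb))^{\mu_N}$. Ramified descent then places this set in the building over $k\llp t\rrp$, where the orbits become $W$-orbits on $X_*(T)\otimes\BZ/N\BZ$, i.e.\ $\Hom_k(\mu_N,G)/G$.

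Your three routes fare differently:
\begin{enumerate}
\item The first route names this approach but stops before the orbit computation, which is the actual content.
\item The second route fails as stated. The twisted group $(\mathrm{Res}_{k\llb t^{1/N}\rrb/k\llb t\rrb}G)^{\mu_N}$ is smooth but in general not reductive, and its special fiber is not a twist of $G_k$. For $G=\GL_2$, $N=2$, $\phi=\mathrm{diag}(1,-1)$, its points are the matrices $\left(\begin{smallmatrix} a & t^{1/2}b\\ t^{1/2}c & d\end{smallmatrix}\right)$ with $a,b,c,d\in k\llb t\rrb$. This is an Iwahori group scheme. Its special fiber has a two-dimensional unipotent radical and reductive quotient $Z_{\GL_2}(\phi)=T$.
\item The third suggestion works, and is more elementary than the paper's route, but you did not carry it out.
\end{enumerate}

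Here is how the third route goes. Put $s=t^{1/N}$ and let $c$ be a cocycle in $G(k\llb s\rrb)$; its reduction $\bar c$ mod $s$ is a homomorphism $\mu_N\to G(k)$.
\begin{enumerate}
\item Suppose $c\equiv\bar c$ mod $s^m$ with $m\geq 1$. Then $v_\xi=c_\xi\bar c_\xi^{-1}$ mod $s^{m+1}$ is a $1$-cocycle with values in $\ker\bigl(G(k\llb s\rrb/s^{m+1})\to G(k\llb s\rrb/s^{m})\bigr)\cong\Lie(G_k)\otimes_k ks^m$, for the $k$-linear action $v\mapsto \mathrm{Ad}(\bar c_\xi)({}^\xi v)$.
\item This $H^1$ vanishes because $N$ is invertible in $k$.
\item Lift the correcting element by smoothness and pass to the limit using completeness. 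This shows $c$ is cohomologous to $\bar c$.
\item Two homomorphisms into $G(k)$ that are cohomologous over $k\llb s\rrb$ are $G(k)$-conjugate: reduce the conjugating element mod $s$.
\end{enumerate}
Combined with your rigidity statement, this shows $H^1(\mu_N,G(k\llb t\rrb))\to H^1(\mu_N,G(k\llb s\rrb))$ is a bijection onto all of $H^1(\mu_N,G(k\llb s\rrb))$.

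You then still need every class to lie in the ${}^0$ part. There are two ways:
\begin{enumerate}
\item If $\phi$ is conjugate into a split maximal torus $T$, as the paper uses, and $\lambda\in X_*(T)$ restricts to $\phi$, then $\phi(\xi)=\lambda(s)^{-1}\,{}^\xi\lambda(s)$ with $\lambda(s)\in G(k\llp s\rrp)$.
\item Alternatively, use $H^1(k\llp t\rrp,G)=1$ and the injectivity of inflation.
\end{enumerate}
Written out, this gives a complete proof of (1) without Bruhat--Tits theory, and shows the ${}^0$ condition is automatic. As submitted, though, the decisive step is missing.
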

\begin{proof}(1)
Associated to the inclusion $G(k\llb{t^{1/N}}\rrb) \to G(k\llp{t^{1/N}}\rrp)$ we have an exact sequence of pointed sets \cite[Proposition I.36]{Se94}
\[ 1 \to G(k\llb{t}\rrb) \to G(k\llp{t}\rrp) \to \left(G(k\llp{t^{1/N}}\rrp)/G(k\llb{t^{1/N}}\rrb)\right)^{\mu_N} \to H^1(\mu_N,{G(k\llb{t^{1/N}}\rrb)}^0) \to 1. \]

By ramified descent, the set $\left(G(k\llp{t^{1/N}}\rrp)/G(k\llb{t^{1/N}}\rrb)\right)^{\mu_N}$ embeds into the extended building of $G$ over $k\llp{t}\rrp$. Taking the quotient by $G(k\llp{t}\rrp)$ identifies this subset with the Weil group orbits of $X_*(T) \otimes_{\BZ} \BZ/N\BZ$, for some fixed maximal torus $T$. The latter set is in bijection with $\Hom_k(\mu_N,G)/G$, since any homomorphism $\mu_N \to G$ is $k$-conjugate to a homomorphism $\mu_N \to T$. \\

(2) Clearly we get for every $[x] \in X(k\llb{t^{1/N}}\rrb)^\phi / G(k\llb{t^{1/N}}\rrb)^\phi$ an element in $\overline{e}_N^{-1}(\phi)$. Conversely, consider $y \in \overline{e}_N^{-1}(\phi)$ and let $\tilde{y}:\Spec(k\llb{t^{1/N}}\rrb) \to [X/G]$ be the composition with $\Spec(k\llb{t^{1/N}}\rrb) \to D^{1/N}$. Then the pull back of $X \to [X/G]$ along $\tilde{y}$ is a trivial $G$-torsor over $\Spec(k\llb{t^{1/N}}\rrb)$, since by assumption there are no non-trivial $G$-torsors over $\Spec(k)$ and thus over $\Spec(k\llb{t^{1/N}}\rrb)$. Acting by $\mu_N$ on a chosen section $\sigma$ gives rise to a $1$-cocycle $\phi_\sigma:\mu_N \to G(\llb{t^{1/N}}\rrb)$. By (1) we may choose $\sigma$ in such a way that $\phi_\sigma =\phi$ producing the desired element in $X(k\llb{t^{1/N}}\rrb)^\phi / G(k\llb{t^{1/N}}\rrb)^\phi$.
\end{proof}

For a cd-quotient stack $\mcM$ we equip the assignment
\[ K/k \longmapsto   \mcM(D_K^{1/N}), \]
with the structure of an imaginary as follows:

First assume that $\mcM \cong [X/\GL_d]$ is a quotient stack. Let $T_d \subset \GL_d$ denote a maximal torus, $W_d = N(T_d)/T_d$ the Weil group and $\BX_*(T_d) = \Hom(\BG_m,T_d)$ the cocharacter lattice. Since any morphism $\mu_N \to \GL_d$ factors through a maximal torus and they are all conjugate to each other, we have for any $K/k$ a bijection
\[ \Hom_K(\mu_N,\GL_d)/\GL_d  \cong (\BX_*(T_d) \otimes \BZ/N\BZ)^{W_d}.\]

Thus if we fix a representative $\phi:\mu_N \to \GL_d$ for any element in the finite set $\Hom_k(\mu_N,\GL_d)/\GL_d$ we get a decomposition

\begin{equation}\label{phide} [X/\GL_d](D_K^{1/N}) = \bigsqcup_{[\phi] \in \Hom_k(\mu_N,\GL_d)/\GL_d} \overline{e}^{-1}_{N}(\phi), \end{equation}
with $\overline{e}_N$ as in \eqref{phitype}. By Lemma \ref{galco} we have a bijection 
\[\overline{e}^{-1}_{N}(\phi)\cong X(K\llb{t^{1/N}}\rrb)^\phi / \GL_d(K\llb{t^{1/N}}\rrb)^\phi\] giving the desired structure of an imaginary. 

Now for a general $\mcM$ let $\tau: [X/\GL_d] \to \mcM$ be a Nisnevich covering, $K/k$ be an extension, and $x\in \Hom(D_K^{1/N}, \mcM)(K\llb{t}\rrb)$. Then $x$ gives rise to a $\mu_N$-invariant morphism $\tilde x: \Spec(K\llb{t^{1/N}}\rrb) \to \mcM$. By the Nisnevich property, the pullback of $\tau$ along $\tilde x$ is isomorphic to $\sqcup \Spec(K\llb{t^{1/N}}\rrb$ and the choice of any component will provide a lift of $x$ to $[X/\GL_d]$. We thus have a surjection
\begin{equation}\label{glsr} [X/\GL_d](D_K^{1/N})\to \mcM(D_K^{1/N}). \end{equation}

Furthermore by the Nisnevich property there exists a stratification $\mcM = \bigsqcup \mcM_i$ with locally closed substacks and sections $\sigma_i:\mcM_i \to [X/\GL_d]$ of $\tau$. Since $\tau$ is in particular étale, \eqref{glsr} gives for any extension $K/k$ a bijection
\begin{equation}\label{covem} \mcM(D_K^{1/N})_{\mcM_{i,k}} \cong  [X/\GL_d](D_K^{1/N})_{\sigma_i(\mcM_{i,k})},\end{equation}
which we use to give $\mcM(D_K^{1/N})$ the structure of an imaginary.

\begin{proposition}\label{preind} The imaginary structure on $\mcM(D^{1/N})$ is independent of the covering $\tau: [X/\GL_d] \to \mcM$ and the sections $ \sigma_i:\mcM_i \to [X/\GL_d]$.
\end{proposition}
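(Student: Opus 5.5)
The plan is the standard one for showing a construction is independent of choices: any two choices are dominated by a common refinement, and it suffices to compare each choice with the refinement. Given two Nisnevich coverings $\tau_1:[X_1/\GL_{d_1}]\to\mcM$ and $\tau_2:[X_2/\GL_{d_2}]\to\mcM$, consider the fibre product $\mathcal{Y}=[X_1/\GL_{d_1}]\times_\mcM[X_2/\GL_{d_2}]$. Since both $\tau_j$ are representable, $\mathcal{Y}$ is the quotient of the scheme $X_{12}=X_1\times_\mcM X_2$ (an algebraic space, a scheme after replacing by a Nisnevich cover if necessary) by $\GL_{d_1}\times\GL_{d_2}$. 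Embedding $\GL_{d_1}\times\GL_{d_2}\subset \GL_{d_1+d_2}$ block diagonally, we get $\mathcal{Y}\cong [X_{12}\times^{\GL_{d_1}\times \GL_{d_2}}\GL_{d_1+d_2}/\GL_{d_1+d_2}]$, again of the required form, and $\mathcal{Y}\to\mcM$ is a Nisnevich covering factoring through both $\tau_j$. So it suffices to treat the case of a Nisnevich morphism $\rho:[X'/\GL_{d'}]\to[X/\GL_d]$ over $\mcM$, together with the independence of sections for a fixed covering.

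For a fixed covering, two families of sections $\sigma_i,\sigma'_j$ can be compared on a common refinement of the stratifications. On a stratum $\mcM_{ij}$ we obtain two sections of $\tau$ over $\mcM_{ij}$. The bijections \eqref{covem} for $\sigma_i$ and $\sigma'_j$ differ by the map $[X/\GL_d](D^{1/N})_{\sigma_i(\mcM_{ij})}\to[X/\GL_d](D^{1/N})_{\sigma'_j(\mcM_{ij})}$ obtained by lifting a twisted arc along the étale map $\tau$ with prescribed closed point. By Hensel's lemma and the uniqueness of étale lifts, this map is a bijection. The point is to check that it is definable in the imaginary structure of Lemma~\ref{galco}(2). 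I will do this by writing it, for each $\phi$, as induced by a definable map on equivariant arcs $X(K\llb t^{1/N}\rrb)^\phi\to X(K\llb t^{1/N}\rrb)^{\phi'}$, possibly after conjugating $\phi$ by a fixed element of $\GL_d(k)$. This map is given by the étale local inverse of $\tau$, which is a definable (implicit-function) map on arcs with fixed residue.

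The key step, and the main obstacle, is the comparison along a representable étale morphism of quotient stacks $\rho:[X'/\GL_{d'}]\to[X/\GL_d]$: we must show that the induced map on twisted arcs is definable for the imaginary structures built from \eqref{phide} and Lemma~\ref{galco}. I would first pull back to reduce to the case where the group homomorphism is explicit. Writing $X'':=X'\times_{[X/\GL_d]}X$, which is a $\GL_{d'}$-torsor over a scheme étale over $X$, we get $[X'/\GL_{d'}]\cong[X''/\GL_{d'}\times\GL_d]$. Both presentations then compare with this one via the projections $\GL_{d'}\times\GL_d\to\GL_d$ and $\GL_{d'}$.

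For a presentation $[Y/G\times H]\cong[(Y/H)/G]$ with $H$ acting freely, the map on fibres of $\overline e_N$ sends $X''(K\llb t^{1/N}\rrb)^{(\phi',\phi)}$ to $X(K\llb t^{1/N}\rrb)^{\phi}$. Its fibres are torsors under $\GL_{d'}(K\llb t^{1/N}\rrb)^{\phi'}$, using Lemma~\ref{galco}(1) and the triviality of $H^1$ to choose equivariant sections. This map is manifestly definable, so it identifies the two quotient imaginaries definably. I also need to check that the choice of representative $\phi$ in each conjugacy class is harmless: a different representative $g\phi g^{-1}$ with $g\in\GL_d(k)$ gives the definable bijection $x\mapsto xg^{-1}$ between the corresponding sets of equivariant arcs, compatible with the quotients.
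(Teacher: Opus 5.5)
Your overall architecture matches the paper's. You compare presentations through $X''$ with the product group $\GL_{d'}\times\GL_d$, use Lemma~\ref{galco} to lift equivariant arcs along a torsor projection, and compare sections on a common refinement of the stratifications. You are more thorough than the paper in one respect. Its proof only compares two global presentations of a quotient stack, and two choices of sections for one fixed $\tau$; your fibre product $\mathcal{Y}$ supplies the comparison of two different coverings $\tau_1,\tau_2$ of a general $\mcM$, which the paper leaves implicit. (The re-embedding into $\GL_{d_1+d_2}$ is harmless but unnecessary: Lemma~\ref{galco} applies to $\GL_{d_1}\times\GL_{d_2}$ directly, as in the paper.)

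However, your last paragraph, which you rightly call the key step, contains a false claim. You apply $[Y/G\times H]\cong[(Y/H)/G]$ to $X''\to X$ with $H=\GL_{d'}$, i.e.\ you identify $X''/\GL_{d'}$ with $X$. As you noted a few lines earlier, $X''/\GL_{d'}\cong\tilde X:=X\times_{[X/\GL_d]}[X'/\GL_{d'}]$ is only \'etale over $X$. So the fibres of $X''(K\llb t^{1/N}\rrb)^{(\phi',\phi)}\to X(K\llb t^{1/N}\rrb)^{\phi}$ are not single $\GL_{d'}(K\llb t^{1/N}\rrb)^{\phi'}$-orbits. They are disjoint unions of such orbits, indexed by lifts of the closed point to $\tilde X$. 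Consequently the map does not ``identify the two quotient imaginaries'', since $\rho$ is not an equivalence. For instance, if $X'=X_1\sqcup X_2$ comes from a cover of $X$ by two $\GL_d$-invariant opens, the induced map on twisted arcs is two-to-one over arcs with closed point in $X_1\cap X_2$.

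The torsor argument is valid only for the other projection $X''\to X'$, which is a $\GL_d$-torsor. That is exactly the paper's argument, and it gives a definable identification of $[X'/\GL_{d'}](D^{1/N})$ with $[X''/\GL_{d'}\times\GL_d](D^{1/N})$. For $X''\to X$ you need a different justification, in two parts:
\begin{enumerate}
\item The induced map on twisted arcs is definable, because on each $(\phi',\phi)$-piece it is given by the algebraic map $X''\to X$, which is equivariant along $\GL_{d'}\times\GL_d\to\GL_d$.
\item Its restriction to arcs with closed point in $\sigma'_i(\mcM_i)$ is a bijection onto the arcs with closed point in $\rho\sigma'_i(\mcM_i)$. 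Both sides are identified with $\mcM(D^{1/N})_{\mcM_i}$ by \eqref{covem}, applied to $\tau'=\tau\circ\rho$ and to $\tau$, using uniqueness of \'etale lifts with prescribed closed point.
\end{enumerate}
A definable bijection of imaginaries has a definable inverse, and that is all you need.

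The same fibre-product device also gives definability in your sections step: apply it to $[X/\GL_d]\times_{\mcM}[X/\GL_d]$ with the section $(\sigma_i,\sigma'_j)$. This is cleaner than an ``\'etale local inverse of $\tau$'', which cannot be written down on arcs of $\mcM$ before $\mcM(D^{1/N})$ has been given a structure. With these corrections your proof goes through.
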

\begin{proof} As in the construction of the imaginary structure we assume first that $\mcM \cong [X/\GL_d]$. If $\mcM \cong [X'/\GL_{d'}]$ is any other presentation of $\mcM$ we can consider a third presentation $\mcM \cong [X''/G'']$ with $X'' = X \times_{\mcM} X'$ and $G'' = \GL_d\times \GL_{d'}$. 

Now let $\phi \in \Hom_k(\mu_N,\GL_d)$ such that $X(K\llb{t^{1/N}}\rrb)^\phi$ is non-empty. Then since $X'' \to X$ is a $\GL_d'$-torsor, the fiber over any $x \in X(K\llb{t^{1/N}}\rrb)^\phi$ admits a section and by Lemma \ref{galco} gives an element in $X''(K\llb{t^{1/N}}\rrb)^{\phi''}$ for a unique $[\phi''] \in \Hom_k(\mu_N,G'')/G''$.
This way the projection $X'' \to X$ induces a  definable bijection 
\[X''(K\llb{t^{1/N}}\rrb)^{\phi''} / G''(K\llb{t^{1/N}}\rrb)^{\phi''} {\overset{\sim}{\longrightarrow}} X(K\llb{t^{1/N}}\rrb)^\phi / \GL_d(K\llb{t^{1/N}}\rrb)^\phi.\]

Applying the same reasoning to the projection $X'' \to X'$ gives the result for a quotient stack.

For the general case it remains to notice that given two stratifications $\mcM = \bigsqcup_{i\in I} \mcM_i = \bigsqcup_{j\in J} \mcM'_j$ and section $\sigma_i,\sigma_j'$ of $\tau: [X/\GL_d] \to \mcM$, the composition $\sigma'_j \circ \tau$ induces for all $i \in I$ and $j\in J$ a definable bijection 
\[  [X/\GL_d](D_K^{1/N})_{\sigma_i(\mcM_{i,k}\cap \mcM_{j,k})} {\overset{\sim}{\longrightarrow}} [X/\GL_d](D_K^{1/N})_{\sigma'_j(\mcM_{i,k}\cap \mcM_{j,k})}. \qedhere\]
\end{proof}

\subsection{Volumes of twisted arcs}\label{sec:vol-twist-arcs} In this section we prove an extension of Proposition \ref{equivol} to a smooth cd-quotient stack $\mcM$ over $k\llb{t}\rrb$. Let $D= \sum_{i\in I} u_i D_i$ a relative $\BQ$-Cartier divisor on $\mcM$ with $u_i< 1$ and simple normal crossing support \cite[Section 2]{Be17}. We define a motivic measure $|\omega_{\mcM,D}|=|\omega_D|$ on $\mcM(D^{1/N})$ as follows:

Let $K/k$ be an extension. If $\mcM \cong [X/\GL_d]$ is a global quotient stack we have the decomposition 
\[ [X/\GL_d](D_K^{1/N}) = \bigsqcup_{[\phi] \in \Hom(\mu_N,\GL_d)/\GL_d} \overline{e}^{-1}_{N}(\phi), \]
from \eqref{phide}, thus it is enough to define the measure on each 
\[\overline{e}^{-1}_{N}(\phi) \cong X(K\llb{t^{1/N}}\rrb)^\phi / \GL_d(K\llb{t^{1/N}}\rrb)^\phi.\]
Let 
\[q: X(K\llb{t^{1/N}}\rrb)^\phi  \longrightarrow \overline{e}^{-1}_{N}(\phi) \]
be the quotient. We write $\tilde{D}$ for the pullback of $D$ to $X$ and define for every definable subset $A \subset  \overline{e}^{-1}_{N}(\phi)$ 
\[ \int_A |\omega_D| := \frac{\BL^{w(\mathfrak{gl}_d)}}{[\Cent(\phi)]}\int_{q^{-1}(A)} |\omega_{\tilde D}|.  \]
This is justified by the fact that 
\[ \int_{\GL_d(K\llb{t^{1/N}}\rrb)^\phi} |\omega_{\GL_d}| = \BL^{-w(\mathfrak{gl}_d)}[\Cent(\phi)],\]
by Proposition \ref{equivol}. Notice also that $\Cent(\phi)$ is isomorphic to a product of general linear groups and thus its class is invertible in $\Ko(\Var_K)_{loc}$.

For a general cd-quotient stack $\mcM$ we have seen in \eqref{covem} that $\mcM(D_K^{1/N})$ embeds into some $[X/\GL_d](D_K^{1/N})$ and we thus can define the measure in this case just by restriction. One checks as in Proposition \ref{preind} that the measure constructed this way is independent of any choices.

Finally let 
\[e_N:  \mcM(D^{1/N}) \longrightarrow I_{\mu_N}\mcM(K) = \Hom(B\mu_N,\mcM)\]
denote the map induced by restriction to the closed point of $D^{1/N}$.

\begin{theorem}\label{fixram} 
 Let $(x,\phi) \in I_{\mu_N}\mcM(K)$ and $I_x = \{i \in I \ |\ x \in D_i\}$. Then we have 
\[ \int_{e_N^{-1}(x,\phi)}|\omega_D| = \BL^{-w(x,\phi)}[B\Aut(x,\phi)]\prod_{i\in I_x}\frac{(1-\BL^{-1})\BL^{u_iw(\FN_{\mcM/D_i,x})}}{1-\BL^{-1+u_i}} \]
in $\Ko(\Var_K)_{loc}$, 
where $B\Aut(x,\phi)$ denotes the classifying stack of the automorphism group of $(x,\phi) \in I_{\mu_N}\mcM$ and $\FN_{\mcM/D_i,x}$ denotes the fiber at $x$ of the normal bundle of $D_i$ in $\mcM$.
\end{theorem}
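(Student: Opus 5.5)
The plan is to reduce to a global quotient and then apply Proposition~\ref{equivol} directly. By \eqref{covem} and the definition of $|\omega_D|$ on a cd-quotient stack, the fibre $e_N^{-1}(x,\phi)$ is identified, together with its measure, with the corresponding fibre for $[X/\GL_d]$ at the point $\sigma_i(x)$. The right hand side of the theorem is intrinsic to the germ of $\mcM$ at $x$: the weight, the automorphism group and the normal bundles are all preserved under étale morphisms. So we may assume $\mcM=[X/\GL_d]$. Choose a lift $\tilde x\in X(K)$ of $x$ and arrange, after conjugating, that $\phi$ is a homomorphism $\mu_N\to \GL_d$ fixing $\tilde x$.

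Next we describe the fibre through Lemma~\ref{galco}. By that lemma, $e_N^{-1}(x,\phi)$ sits inside $\overline e_N^{-1}(\phi)\cong X(K\llb t^{1/N}\rrb)^\phi/\GL_d(K\llb t^{1/N}\rrb)^\phi$. It is precisely the image of the arcs whose closed point lies in the $\Cent(\phi)$-orbit $Z=\Cent(\phi)\cdot \tilde x\subset X^\phi$. Two such arcs are identified modulo $\GL_d(K\llb t^{1/N}\rrb)^\phi$, whose reduction map to $\Cent(\phi)$ is surjective. Hence
\[\int_{e_N^{-1}(x,\phi)}|\omega_D| = \frac{\BL^{w(\mathfrak{gl}_d)}}{[\Cent(\phi)]}\int_{X^\phi_{N,Z}}|\omega_{\tilde D}|.\]
We fibre the right side over $Z\cong \Cent(\phi)/\Aut(x,\phi)$. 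Here $\Aut(x,\phi)$ is the stabilizer of $\tilde x$ in $\Cent(\phi)$, which is exactly the automorphism group of $(x,\phi)$ in $I_{\mu_N}\mcM$. Proposition~\ref{equivol} applied at each point of $Z$ gives a constant value, since all points of the orbit are $\Cent(\phi)$-translates of $\tilde x$ and $D$ is $\GL_d$-invariant. That value is
\[\BL^{-w(T_{\tilde x}X)}\prod_{i\in I_x}\frac{(1-\BL^{-1})\BL^{u_iw(\FN_{X/\tilde D_i,\tilde x})}}{1-\BL^{-1+u_i}}.\]
By Fubini (Theorem~\ref{thm:fubini}) the integral over $X^\phi_{N,Z}$ is $[Z]$ times this value. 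We have $[Z]=[\Cent(\phi)]/[\Aut(x,\phi)]=[\Cent(\phi)][B\Aut(x,\phi)]$ in $\Ko(\Var_K)_{loc}$. This uses that $Z\to Z$ is a $\Aut(x,\phi)$-quotient of the special group $\Cent(\phi)$, which is a product of $\GL$'s, so $[\Cent(\phi)/H][BH]^{-1}$-type identities hold.

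Collecting terms, the factor $[\Cent(\phi)]$ cancels. We are left with $\BL^{w(\mathfrak{gl}_d)-w(T_{\tilde x}X)}=\BL^{-w(x,\phi)}$, by the description $T_x\mcM\cong[\mathfrak{gl}_d\to T_{\tilde x}X]$ from \S\ref{sec:weight}. Finally, $\FN_{X/\tilde D_i,\tilde x}=\FN_{\mcM/D_i,x}$ as $\mu_N$-representations, because $X\to\mcM$ is smooth and $\tilde D_i$ is the pullback of $D_i$.

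The main obstacle is the middle step: making the identification of $e_N^{-1}(x,\phi)$ with the $\Cent(\phi)$-orbit of arcs precise definably. In particular, we must check that the fibres of the quotient map $q$ over the orbit contribute exactly the factor $[\Cent(\phi)]$ predicted by $\int_{\GL_d(K\llb t^{1/N}\rrb)^\phi}|\omega_{\GL_d}|$. The identity $[\Cent(\phi)/\Aut(x,\phi)]=[\Cent(\phi)][B\Aut(x,\phi)]$ for a possibly non-special stabilizer also needs care. I would handle the latter using the fact, from \cite{Ek09}, that for a $\GL_n$-product group $G$ and any closed subgroup $H$, the class of $G/H$ equals $[G]\,[BH]$, since $[G/H]=[BH\times_{BG}\Spec K]$ and $G$ is special.
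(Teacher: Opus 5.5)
Your proposal is correct and follows essentially the same route as the paper. Both reduce via the Nisnevich cover to $[X/\GL_d]$, unwind the definition $\frac{\BL^{w(\mathfrak{gl}_d)}}{[\Cent(\phi)]}\int_{q^{-1}(\cdot)}|\omega_{\tilde D}|$, apply Proposition~\ref{equivol} over the locus of base points, and use that $\Cent(\phi)$ is a product of general linear groups (hence special) to get $[Z]/[\Cent(\phi)]=[B\Aut(x,\phi)]$ together with $w(T_{\tilde x}X)-w(\mathfrak{gl}_d)=w(x,\phi)$. Your description of the base-point locus as the orbit $Z=\Cent(\phi)\cdot\tilde x$ is actually the precise form of what the paper writes as $p^{-1}(x)$. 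The ``obstacle'' you flag about the fibres of $q$ is not needed, since that normalization is the definition of $|\omega_D|$. You should drop the incorrect intermediate $[\Cent(\phi)]/[\Aut(x,\phi)]$ (false for non-special stabilizers) in favour of the speciality argument you give at the end.
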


\begin{proof} Let $[X/\GL_d] \to \mcM$ be a Nisnevich cover and fix a lift $(\tilde{x},\tilde{\phi}) \in I_{\mu_N}[X/\GL_d]$ of $(x,\phi)$, which exists by the same argument as for \eqref{glsr}. We may then identify $e_N^{-1}(x,\phi)$ with $e_N^{-1}(\tilde{x},\tilde{\phi}) \subset [X/\GL_d](D_K^{1/N})$ and this identification is measure preserving by definition of the motivic measure. 

We thus assume from now on that $\mcM = [X/\GL_d]$ and let $q: X \to \mcM$ be the smooth atlas. Then we have $p^{-1}(e_N^{-1}(x,\phi)) =  X(K\llb{t^{1/N}}\rrb)_{p^{-1}(x)}^\phi$ and thus, by Proposition \ref{equivol},

\begin{align*} \int_{e_N^{-1}(x,\phi)}|\omega_D|  =  \frac{\BL^{w(\mathfrak{gl}_d)}}{[\Cent(\phi)]}&\int_{X(K\llb{t^{1/N}}\rrb)_{p^{-1}(x)}^\phi}|\omega_{\tilde D}|=\\ &\BL^{-w(x,\phi)}\frac{[p^{-1}(x)]}{[\Cent(\phi)]}\prod_{i\in I_x}\frac{(1-\BL^{-1})\BL^{u_iw(\FN_{\mcM/D_i,x})}}{1-\BL^{-1+u_i}}.\end{align*}

Finally we notice that $\Aut(x,\phi) = \Aut(x) \cap \Cent(\phi)$ and thus there is an equivalence of quotient stacks $[p^{-1}(x)/\Cent(\phi)] \xrightarrow{\sim} B\Aut(x,\phi)$. As $\Cent(\phi) \subset \GL_d$ is a product of general linear groups we have $[B\Aut(x,\phi)] = \frac{[p^{-1}(x)]}{[\Cent(\phi)]}$.
\end{proof}

We can globalize Theorem \ref{fixram} to any constructible subset $Z \subset I_{\mu_N}\mcM$. For any $J \subset I$ we write as usual $ D_J^\circ = \cap_{j\in J} D_j \setminus \cup_{i\notin J} D_i$ and furthermore 
\[I_{\mu_N} \mcM_J = \{(x,\phi) \in \mcM\ |\ x \in D_J^\circ \}.\]
We write $w_J: I_{\mu_N} \mcM_J \to \BQ^J$ for the constructible function given by $(w_J(x,\phi))_j = w(\FN_{\mcM/D_j,x})$ for all $j\in J$. 

Then for any constructible subset $Z \subset I_{\mu_N}\mcM$ we set

\[ [Z]^{w,D} = \sum_{J \subset I}\sum_{(a,b)\in\BQ \times \BQ^J} \BL^{-a}[w^{-1}(a)\cap w_J^{-1}(b)\cap Z] \prod_{j\in J}\frac{(1-\BL^{-1})\BL^{u_jb_j}}{1-\BL^{-1+u_j}} .  \]

\begin{corollary}\label{cor:orbi-form} For any constructible subset $Z \subset I_{\mu_N}\mcM$ we have
\[\int_{e_N^{-1}(Z)}|\omega_D| = [Z]^{w,D}.\]
\end{corollary}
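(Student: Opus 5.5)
The plan is to deduce the corollary from Theorem \ref{fixram} by a Fubini argument over the evaluation map $e_N$. First I stratify $Z$ into finitely many locally closed (constructible) pieces. On each piece the following data are constant: the index set $J$ with $x\in D_J^\circ$, the weight $w(x,\phi)$, and the vector $w_J(x,\phi)$. This is possible because $w$ and $w_J$ are constructible functions. They take finitely many values on $I_{\mu_N}\mcM$, since the $\hat\mu$-characters occurring in the tangent complex and in the normal bundles vary in a finite set. Moreover $I_{\mu_N}\mcM=\bigsqcup_J I_{\mu_N}\mcM_J$. Both sides of the claimed identity are additive in $Z$, the left one because the integral is additive over disjoint definable sets. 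So it suffices to treat a single stratum $Z$ with fixed $J$, $a=w$ and $b=w_J$.

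For such a $Z$, Theorem \ref{fixram} gives, for every point $(x,\phi)\in Z(K)$,
\[\int_{e_N^{-1}(x,\phi)}|\omega_D| = \BL^{-a}[B\Aut(x,\phi)]\prod_{j\in J}\frac{(1-\BL^{-1})\BL^{u_jb_j}}{1-\BL^{-1+u_j}},\]
and the factor in front of $[B\Aut(x,\phi)]$ is independent of the point. Next I reduce to a global quotient. Using a Nisnevich cover and the sections $\sigma_i$ of \eqref{covem}, and refining the stratification, I may assume $\mcM=[X/\GL_d]$ and that $Z$ lifts to $\{\phi\}\times Z'$ with $Z'\subset X^\phi$ constructible and $\phi$ fixed. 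This works because $\Hom(\mu_N,\GL_d)/\GL_d$ is finite. Then $Z=[Z'/\Cent(\phi)]$, and $e_N^{-1}(Z)$ is the quotient of $X(K\llb t^{1/N}\rrb)^\phi_{Z'}$ by $\GL_d(K\llb t^{1/N}\rrb)^\phi$. By the definition of $|\omega_D|$,
\[\int_{e_N^{-1}(Z)}|\omega_D|=\frac{\BL^{w(\mathfrak{gl}_d)}}{[\Cent(\phi)]}\int_{X(K\llb t^{1/N}\rrb)^\phi_{Z'}}|\omega_{\tilde D}|.\]

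I compute the right-hand integral by Fubini (Theorem \ref{thm:fubini}) along the residue map $X(K\llb t^{1/N}\rrb)^\phi_{Z'}\to Z'$. By the linearization in the proof of Proposition \ref{equivol}, the fiber integral over each $z\in Z'$ equals the constant
\[\BL^{-w(T_zX)}\prod_{j\in J}\frac{(1-\BL^{-1})\BL^{u_jb_j}}{1-\BL^{-1+u_j}}.\]
The residue map lands in $K$-points, i.e. its target is a residue-field definable set, so integrating this constant over $Z'$ simply multiplies it by the class $[Z']$. Using $w(x,\phi)=w(T_zX)-w(\mathfrak{gl}_d)$, I obtain
\[\BL^{-a}\frac{[Z']}{[\Cent(\phi)]}\prod_{j\in J}\frac{(1-\BL^{-1})\BL^{u_jb_j}}{1-\BL^{-1+u_j}}.\]
Since $\Cent(\phi)$ is a product of general linear groups, it is special, so $[Z']/[\Cent(\phi)]=[Z'/\Cent(\phi)]=[Z]$ in $\Ko(\Var_K)_{loc}$. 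This matches the $(J,a,b)$-term of $[Z]^{w,D}$.

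The main obstacle is making the fiberwise computation uniform, that is, justifying the passage from the pointwise formula to an integral over $Z'$. The linearizing coordinates of Proposition \ref{equivol} are a priori only étale-local around each point. To handle this, I would first try to choose, after a further constructible stratification of $Z'$, étale coordinates that diagonalize the $\mu_N$-action uniformly over each stratum. This would make the bijection $\rho$ definable in families, so that Fubini applies with a constant fiber integral. Alternatively, I would argue via the relative integral of Section \ref{fam}: the relative integral is a constructible function on $Z'$ whose value at every point is the constant above, and hence it equals that constant.
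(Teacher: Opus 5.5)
Your argument is correct, but it is organized differently from the paper's.

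\textbf{The paper's route.} It uses Theorem \ref{fixram} as a black box. It regards $(x,\phi)\mapsto\int_{e_N^{-1}(x,\phi)}|\omega_D|$ and $(x,\phi)\mapsto\BL^{-w(x,\phi)}[B\Aut(x,\phi)]\prod_{i\in I_x}(\cdots)$ as motivic functions on $Z$ that agree at every point $\Spec(K)\to Z$. It then spreads this equality out by compactness: each point has a constructible neighbourhood on which the two functions agree, and finitely many such neighbourhoods cover $Z$.

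\textbf{Your route.} You effectively redo the proof of Theorem \ref{fixram} in families. You stratify by $(J,a,b)$ and reduce to $[X/\GL_d]$ with $\phi$ fixed. You then integrate on the atlas over the constructible set $Z'\subset X^\phi$ (take it to be the full $\Cent(\phi)$-invariant preimage), using Fubini along the residue map. Only at the end do you descend, via $[Z']=[\Cent(\phi)][Z]$ and the specialness of $\Cent(\phi)$.

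\textbf{Comparison.} Your version keeps every integral over an honest definable set, and the stack $Z$ enters only through one identity in $\Ko(\Var_K)_{loc}$. The paper's version tacitly integrates the pointwise class $[B\Aut(x,\phi)]$ over the stack $Z$ and uses Fubini along $e_N$ without spelling it out. The price of your version is repeating the reduction steps already carried out in the proof of Theorem \ref{fixram}.

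\textbf{The uniformity step.} Your route does not avoid the uniformity issue you flag. Your second option says the relative integral is a constructible function on $Z'$ equal to the constant at every point, ``hence'' equal to the constant. That ``hence'' is exactly the paper's compactness argument and needs that justification. Since one evaluates at points over all algebraically closed $K\supset k$, in particular at generic points of strata, each pointwise identity spreads out to a constructible piece, and finitely many pieces cover $Z'$.

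Your first option would make the fibre computation uniform by hand. Choose $\mu_N$-semi-invariant étale coordinates, with the coordinate $f_j$ a local equation of $\tilde D_j$ for $j\in J$, over strata of $X^\phi$ on which the character decomposition of $T_zX$ is constant. Model-theoretic compactness is then replaced by a finite cover of $Z'$ by such charts.
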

\begin{proof}Both sides define motivic functions on $Z$ and by Theorem \ref{fixram} these functions agree on every point $\Spec(K) \to Z$. Around each point $\Spec(K) \to Z$ we can then find a constructible subset $U\subset Z$ where they agree. By compactness, finitely many such $U$ cover $Z$, showing that the two constructible functions are equal on $Z$.
\end{proof}

\subsection{Orbifold formula in an arbitrary theory}

In this section, we prove the orbifold formula in an arbitrary theory. This will not be used in the rest of the paper. 

Let $T$ be a theory in a language $\FC$ extending the Denef-Pas language $\FC_{DP,k}$ as in \cite[Section 2.7]{CL-2008} and containing the theory $T_{\mathrm{Hens}}$ of Henselian discretely valued fields of equicharacteristic zero. 

One obtains as in \cite[Section 16.1]{CL-2008} rings of constructible motivic functions that we denote in this section by $\FC^T(Z)$ for definable $Z$ in $T$. 

When $T$ is $T_{\mathrm{Hens},k}$, one obtains the ring denoted by $\FC(Z)$ in \cite{CL-2008}. When $T$ is the theory $T_\ACF$ obtained from $T_{\mathrm{Hens},k}$ by specifying that the residue field is algebraically closed, one obtains the rings considered in the rest of this paper. To avoid confusion, we denote them in this subsection by $\FC^\ACF(Z)$. When $T$ is the theory obtained from $T_{\mathrm{Hens}}$ by specifying that the residue field is pseudofinite, we denote the corresponding rings $\FC^\PSF(Z)$. 

If $i: T_1\to T_2$ is an inclusion of theories as above, and $Z$ a definable in $T_1$, then $Z$ determines a definable $i_*(Z)$ in $T_2$ since models of $T_1$ are models of $T_2$. As explained in \cite[Section 16.1]{CL-2008}, we then get a natural morphism $i_*\colon C^{T_1}(Z)\to C^{T_2}(i_*Z)$ compatible with integration by \cite[Proposition 16.1.1]{CL-2008}.

Now since $T_\ACF$ admits quantifier elimination in the Denef-Pas language, any definable $Z$ in $T_\ACF$ can be described by a quantifier free formula. This formula determines a definable $\iota_*Z$ in $T_{\mathrm{Hens},k}$, independent of the choice of the quantifier free formula representing $Z$. This assignment preserves bijections, hence induces a map $\iota_*\colon \FC^\ACF(Z)\to \FC(\iota_*Z)$. This morphism is compatible with integration in the same way as in \cite[Proposition 16.1.1]{CL-2008}, which follows by unravelling the construction of the integral. Moreover, the composite $\iota_*i_*$ is the identity for $i\colon T_{\mathrm{Hens},k}\to T_\ACF$. 

As in Section~\ref{sec:vol-twist-arcs}, let $\FM$ be a smooth cd-quotient stack over $k\llb{t}\rrb$. Let $D= \sum_{i\in I} u_i D_i$ a relative $\BQ$-Cartier divisor on $\mcM$ with $u_i< 1$ and simple normal crossing support, consider the motivic measure $|\omega_{\mcM,D}|=|\omega_D|$ on $\mcM(D^{1/N})$. 

Let $T$ be a theory as above, with inclusion $i\colon T_{\mathrm{Hens},k}\to T$. Then for any constructible subset $Z \subset I_{\mu_N}\mcM$, we consider $\iota_*i_*[Z]^{w,D}\in \FC^T(\iota_*i_*I_{\mu_N}\mcM).$

\begin{theorem}
\label{thm:orbiformulaT} For $T$ as above and $Z \subset I_{\mu_N}\mcM$ constructible, we have  
\[\int^T_{e_N^{-1}(Z)}|\omega_D| = [Z]^{w,D}\in \FC^T(\iota_*i_*I_{\mu_N}\mcM).\]
\end{theorem}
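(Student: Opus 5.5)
The plan is to deduce the statement from Corollary \ref{cor:orbi-form} by transport along the comparison morphisms $i_*$ and $\iota_*$, rather than redoing the computation in $T$. Write $i\colon T_{\mathrm{Hens},k}\to T$ and $j\colon T_{\mathrm{Hens},k}\to T_\ACF$ for the inclusions. First I would check that every object entering the left hand side, namely the imaginary $\mcM(D^{1/N})$, the map $e_N$, the measure $|\omega_D|$ and the set $e_N^{-1}(Z)$, is described by formulas that are quantifier free (or can be chosen so). In the quotient case $[X/\GL_d]$ this holds because the $\phi$-equivariance condition, the Galois action on $K\llp t\rrp^N\simeq K\llp t^{1/N}\rrp$, and the residue map are all given by polynomial conditions and conditions on the angular component and valuation. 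The local form of $|\omega_{\tilde D}|$ is also of this type. The decomposition \eqref{phide} is indexed by the finite set $\Hom_k(\mu_N,\GL_d)/\GL_d$, and the Nisnevich stratification with sections $\sigma_i$ is algebraic over $k$. Hence the definable sets $\iota_*e_N^{-1}(Z)$ and $i_*\iota_*e_N^{-1}(Z)$, with their forms, are the ones obtained by reading the same formulas in $T_{\mathrm{Hens},k}$ and in $T$.

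Second, I would apply compatibility with integration. By Corollary \ref{cor:orbi-form} we have $\int^{\ACF}_{e_N^{-1}(Z)}|\omega_D|=[Z]^{w,D}$ in $\FC^\ACF(I_{\mu_N}\mcM)$, and I would apply $\iota_*$ to this identity. Since $\iota_*$ commutes with integration, as noted above following \cite[Proposition 16.1.1]{CL-2008}, the integral becomes $\int_{\iota_* e_N^{-1}(Z)}|\omega_D|$ in $\FC(\iota_*I_{\mu_N}\mcM)$. I would then push this forward along $i_*$, which also commutes with integration by \cite[Proposition 16.1.1]{CL-2008}, and obtain the integral computed in $T$. On the right hand side, $[Z]^{w,D}$ is built only from classes of constructible subsets $w^{-1}(a)\cap w_J^{-1}(b)\cap Z$ of residue-field sort and from rational functions in $\BL$. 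Its image in $\FC^T$ is therefore the class with the same expression.

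The main obstacle I expect is this. Corollary \ref{cor:orbi-form} was proved pointwise over algebraically closed $K$, using Theorem \ref{fixram} and then compactness. The fact that $\iota_*$ is well defined and compatible with integration, independently of the chosen quantifier-free formula, is only asserted above. I would have to make sure that the identity in $\FC^\ACF$ lifts to an identity of the underlying quantifier-free data, and not merely to an identity of functions on algebraically closed residue fields. To handle this I would use that in $T_{\mathrm{Hens},k}$ cell decomposition makes a constructible function determined by its quantifier-free description. If that argument fails, the fallback is to rerun the proof of Proposition \ref{equivol} directly in $T$. That proof is a linearization plus the explicit definable bijection $\rho$, and the change of variables formula holds in any such $T$ by \cite{CL-2008}. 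The one genuinely field-dependent step is Lemma \ref{galco}(2), which needs $H^1(K,\GL_d)=1$. By Hilbert 90 this holds over every field for $\GL_d$, so I would check that the lattice-of-fixed-points description of $\overline e_N^{-1}(\phi)$ remains valid over non-algebraically closed residue fields.
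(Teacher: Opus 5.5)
Your proposal is correct and is essentially the paper's proof: the paper also deduces the statement from Corollary \ref{cor:orbi-form} by applying the composite comparison morphism $\iota_*i_*$ on $\FC^\ACF(I_{\mu_N}\mcM)$ and on $\FC^\ACF(\mcM(D^{1/N}))$, and by using that it commutes with integration via \cite[Proposition 16.1.1]{CL-2008}. The well-definedness and integration-compatibility of $\iota_*$, which you flag as the main obstacle, is simply asserted in the discussion before the theorem and taken as given there, so your fallback of rerunning Proposition \ref{equivol} directly in $T$ is not needed.
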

\begin{proof}
If $T$ is $T_\ACF$, this is Corollary \ref{cor:orbi-form}. 
In general, consider the morphism \[\iota_*i_*\colon \FC^\ACF(I_{\mu_N}\mcM)\longrightarrow  \FC^T(\iota_*i_*I_{\mu_N}\mcM)\] and the same for $\FM(D^{1/N})$.  They commute with integration by \cite[Proposition 16.1.1]{CL-2008} and the discussion above, so the result in general follows from the one for $T_\ACF$. 
\end{proof}

Recall from \cite[Definition 2.13.2]{FLW} the notion of definable smooth Deligne-Mumford stack over $k\llb{t}\rrb$, which applies to a theory $T$ as above. 

\begin{corollary}
\label{cor:orbiformulaT}
Let $\FM$ be a definable smooth Deligne-Mumford stack over $k\llb{t}\rrb$. Then for any constructible $Z\subset I_{\hat \mu}\mcM$, we have 
\[\int^T_{e^{-1}(Z)}|\omega_\mcM| = [Z]^{w}\in \FC^T(I_{\hat\mu}\mcM),\]
where 
\[ [Z]^{w}= \sum_{a \in\BQ} \BL^{-a}[w^{-1}(a)\cap Z]. \]
\end{corollary}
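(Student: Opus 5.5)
The plan is to deduce the corollary from Theorem \ref{thm:orbiformulaT}, specialised to $D=0$, after three reductions: work with finitely many levels $N$, identify the integration domain with a disjoint union of twisted-arc spaces of fixed order, and simplify the weighted class.

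\emph{Reduction to finitely many $N$.} Since $\mcM$ is Deligne-Mumford and of finite type, its stabilizers are finite groups of bounded order. Hence there is an $N_0$ such that every $\hat\mu\to\Aut(x)$ factors through $\mu_{N_0}$. Consequently $I_{\hat\mu}\mcM$ is a finite disjoint union of open and closed pieces $I_{\mu_N}^{\mathrm{prim}}\mcM$, $N\mid N_0$, parametrising injective homomorphisms $\mu_N\hookrightarrow\Aut(x)$. Each constructible $Z\subset I_{\hat\mu}\mcM$ decomposes accordingly as $Z=\bigsqcup_N Z_N$ with $Z_N$ constructible in $I_{\mu_N}\mcM$.

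\emph{Identifying the domain.} In the framework of \cite[Definition 2.13.2]{FLW}, the space $e^{-1}(Z)$ is the set of twisted arcs with closed point in $Z$. I will identify it definably with $\bigsqcup_N e_N^{-1}(Z_N)$, the union taken over representable twisted arcs $D^{1/N}\to\mcM$. The key input is that every arc of the coarse space lifts uniquely to a representable twisted arc of minimal order $N$. Under this identification, the measure $|\omega_\mcM|$ on $e^{-1}(Z)$ must be shown to match the measure $|\omega_D|$ with $D=0$ constructed in Section~\ref{sec:vol-twist-arcs}. Both measures are defined locally on an étale chart $[X/G]$ with $G$ finite as the pullback of the orbifold measure from $X/\mu_N$. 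I would check this comparison on such a chart, using that a finite group is a closed subgroup of some $\GL_d$, so $[X/G]\cong[X\times^G\GL_d/\GL_d]$. The correction factor $\BL^{w(\mathfrak{gl}_d)}/[\Cent(\phi)]$ is absorbed because the stabilizer term $[B\Aut(x,\phi)]$ equals $1$ here: $\Aut(x,\phi)$ is a finite group, and its classifying stack has class $1$ in $\Ko(\Var_K)_{loc}$ by the same mechanism as Remark \ref{munt}.

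\emph{Applying the theorem.} With $D=0$, the product over $I_x$ in $[Z_N]^{w,D}$ is empty, so $[Z_N]^{w,0}=\sum_a\BL^{-a}[w^{-1}(a)\cap Z_N]$. Theorem \ref{thm:orbiformulaT} applied to each $Z_N$ then gives
\[
\int^T_{e_N^{-1}(Z_N)}|\omega_\mcM| = [Z_N]^{w}.
\]
Summing over the finitely many $N\mid N_0$ and using additivity of both sides yields the stated formula in $\FC^T(I_{\hat\mu}\mcM)$. One point needs attention: the weight on $I_{\hat\mu}\mcM$ must restrict to the weight on $I_{\mu_N}\mcM$. This holds because both are computed from the same $\hat\mu$-character decomposition of $T_x\mcM$.

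\emph{Main obstacle.} I expect the hardest step to be the comparison of frameworks. Theorem \ref{thm:orbiformulaT} assumes a cd-quotient stack, while the corollary concerns a definable DM stack in the sense of \cite{FLW}, which may be presented only étale-locally as $[X/G]$. My first approach is to cover $\mcM$ by finitely many such étale charts and verify, chart by chart, that the twisted-arc sets and measures coincide with those of the present paper. Additivity would then reduce the corollary to the global-quotient case, where the argument above applies directly.
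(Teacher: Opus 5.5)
Your reduction is the paper's: bound the stabilizer orders, then invoke Theorem \ref{thm:orbiformulaT} with $D=0$. The paper takes a single $N$ divisible by all stabilizer orders, so that $I_{\hat\mu}\mcM=I_{\mu_N}\mcM$. It treats the definable case pointwise, since points of a definable DM stack are points of an honest DM stack with the same inertia. Your splitting into primitive levels is equivalent bookkeeping.

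The step that fails is your measure comparison. The claim that $[B\Aut(x,\phi)]=1$ because $\Aut(x,\phi)$ is finite is false in general.
\begin{itemize}
\item Remark \ref{munt} relies on $\mu_N\subset\BG_m$ with $\BG_m/\mu_N\cong\BG_m$, and this does not generalise.
\item Ekedahl exhibited finite groups $H$ with $[BH]\neq 1$ in $\Ko(\Var_\BC)_{loc}$; these are counterexamples to Noether's problem, with non-trivial unramified Brauer group.
\item Such $H$ do occur as $\Aut(x,\phi)$, for instance at the origin of $[\BA^n/H]$ with $\phi$ trivial.
\end{itemize}

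It is also the wrong mechanism. The factor $\BL^{w(\mathfrak{gl}_d)}/[\Cent(\phi)]$ is the inverse of $\int_{\GL_d(K\llb t^{1/N}\rrb)^\phi}|\omega_{\GL_d}|$ (Proposition \ref{equivol}). It cancels against the fibres of $q$, which are free $\GL_d(K\llb t^{1/N}\rrb)^\phi$-orbits; Lemma \ref{lem:measfubini} rests on this same cancellation. The leftover $[p^{-1}(x)]/[\Cent(\phi)]=[B\Aut(x,\phi)]$ in Theorem \ref{fixram} is not absorbed anywhere. It is exactly what makes $[w^{-1}(a)\cap Z]$ on the right-hand side a stacky class.

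Relatedly, the measure of Section \ref{sec:vol-twist-arcs} is not the pullback of the orbifold measure from $X/\mu_N$. That pullback is a measure on the equivariant arcs $X^\phi_N$. Descending it along the generically free action of the finite centralizer cannot be done by dividing by its order, and this descent is precisely where $[B\Aut(x,\phi)]$ enters. You have two options:
\begin{itemize}
\item Take $|\omega_\mcM|$ on twisted arcs to be the measure of Section \ref{sec:vol-twist-arcs}, as the paper implicitly does. Then no comparison is needed.
\item If you want to compare with a finite-chart definition, do it by Fubini along $q$ as in Lemma \ref{lem:measfubini}, keeping the factor $[B\Aut(x,\phi)]$.
\end{itemize}

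Your plan for the cd-quotient hypothesis also needs repair. Finitely many étale charts do not partition the twisted arcs: a $K$-arc generally has several lifts to a given chart and lifts to several charts, so additivity does not apply. This is why the paper uses Nisnevich covers with sections over strata. The clean fix is as follows. A separated finite type DM stack in characteristic $0$ is tame, so its coarse space is a good moduli space. It is therefore a cd-quotient stack by the result of Alper--Hall--Rydh recalled after Definition \ref{def:cdstack}, and Theorem \ref{thm:orbiformulaT} applies directly.
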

\begin{proof}
When $\FM$ is a smooth Deligne-Mumford stack over $k\llb{t}\rrb$, we have $I_{\hat \mu}\mcM=I_{\mu_N}\mcM$ for $N$ large enough such that all orders of stabiliser divide $N$, so this is a particular case of Theorem \ref{thm:orbiformulaT}. 

For any choice of model and parameters, points of a definable Deligne-Mumford stack are points of an actual Deligne-Mumford stack, with matching inertia stacks. So the general case reduce to this particular case we just proved.
\end{proof}

\begin{rmk}
This Corollary for $T_\PSF$ is the orbifold formula of \cite[Theorem 2.13.4]{FLW}.
\end{rmk}

\section{Hrushovski-Kazhdan characteristic in buildings }\label{hkc}
\subsection{Bruhat-Tits buildings}
In this section $F$ is a rank one valued field, with valuation $v$ and valuation ring $R$. 

\begin{definition}
An \emph{additive norm} $\alpha$ on an $F$-vector space $V$ 
is a map 
$\alpha \colon V\to \BR\cup\set{\infty}$ satisfying :
\begin{enumerate}
\item For every $v,w\in V$, $\alpha(v+w)\geq \min\set{\alpha(v),\alpha(w)}$.
\item For every $v\in V$, $\lambda\in F$, $\alpha(\lambda v)=v(\lambda)+\alpha(v)$.
\item For every $v\in V$, $\alpha(v)=\infty$ if and only if $v=0$.
\end{enumerate}
\end{definition}

For our purposes, we shall consider rational additive norms: assuming $F$ has value group included in $\BQ$, a \emph{rational additive norm} is an additive norm $\alpha$ such that the image of $\alpha$ is included in $\BQ\cup \set{\infty}$. More generally, assuming $F$ has value group included in some ordered group $\Gamma$, we can consider $\Gamma$-additive norms, that is, additive norms with values in $\Gamma$.

\begin{definition}
An additive norm $\alpha$ \emph{splits over $v_1,\dots,v_r\in V$} if for every $\lambda_1,\dots,\lambda_r\in F$, 
\[
\alpha\Bigl(\sum_i \lambda_i v_i\Bigr)=\min_i\set{\alpha(\lambda_i v_i)}.
\]
\end{definition}
A splitting set for $\alpha$ is necessarily linearly independent, and we call a splitting set which is also a basis of $V$ a splitting basis of $\alpha$. 

Define the ball $L_{\alpha,r}=\set{v\in V\mid \alpha(v)\geq r}$. It is an $R$-module. From \cite[Lemma 15.1.10]{KP:BTT}, $\alpha$ admits a splitting basis if and only if $L_{\alpha,r}$ is a lattice, that is, a finitely generated $R$-module that generates $V$ over $F$.
Moreover, if $F$ is complete then every additive norm $\alpha$ splits over some basis, by \cite[Proposition 15.1.11]{KP:BTT}.

\begin{definition}
\begin{enumerate}
\item Given a basis $v_1,\dots,v_n$ of $V$, the apartment associated to it is the set 
$\widetilde{\FA}(v_1,\dots,v_n)$ of rational additive norms that splits over $v_1,\dots,v_n$. 
\item The enlarged building $\widetilde{\FB}(\GL_n(F))$ of $\GL_n(F)$ is the set of rational additive norms on $F^n$ splitting  over some basis. It is a union of the apartments associated to  all bases of $F^n$. 
\end{enumerate}
\end{definition}

The building $\widetilde{\FB}(\GL_n(F))$ has a structure of a simplicial complex by considering periodic lattice chains, as follows. 

\begin{definition}
A periodic lattice chain $\mathfrak{L}$ is a set of lattices totally ordered by inclusion and such that if $\FL\in \mathfrak{L}$, then $x\cdot L\in \mathfrak{L}$ for every $x\in F^\times$. 

A grading of $\mathfrak{L}$  is a map $c\colon \mathfrak{L}\to \BR$ such that $c(\mathfrak{M}\FL)=c(\FL)+1$. 
\end{definition}

There is a one to one correspondence between the set of splitting additive norms and the set of graded periodic lattice chain. To $\alpha$, one associates the lattice chain $(L_{\alpha,r})_{r\in \BR}$, together with the map $c(L)=\inf\set{\alpha(v)\mid v\in L}$. Reciprocally, given $(\mathfrak{L},c)$, one defines $\alpha(v)=\sup\set{c(L)\mid v\in L\in \mathfrak{L}}$.

We say that two points of $\widetilde{\FB}(\GL_n(F))$ lie in the same simplex if they correspond to the same lattice chain. We say that the simplex corresponding to a chain $\mathfrak{L}$ is a face of the one corresponding to $\mathfrak{L}'$ if $\mathfrak{L}\subset \mathfrak{L}'$. In particular, the set of vertices of $\widetilde{\FB}(\GL_n(F))$ is identified with the set of $R$-lattices of $F^n$, which is $\GL_n(F)/\GL_n(R)$.

Fix a lattice chain $\mathfrak{L}$ and $L_0\in \mathfrak{L}$. By taking quotients modulo $\mathfrak{M}L_0$, the set of $L\in \mathfrak{L}$ such that $\mathfrak{M}L_0\subsetneq L\subsetneq L_0$ is mapped to the set of elements of a partial flag in the vector space $k^n$, where $k$ is the residue field of $F$. If such a set is of cardinality $r$, then we call $r+1$ the rank of $\mathfrak{L}$, and it is the rank of the simplex in which $\mathfrak{L}$ lies. Hence the simplicial structure is determined by flag varieties over the residue field. For example, the set of $n$-simplices adjacent to a given $n-1$ simplex is identified with $\BP^1(k)$; the set of $n$-simplices adjacent to a given vertex is identified with the complete flag variety over $k^n$.

The building is equipped with a metric $d$ defined using the affine structure on apartments, which are affine spaces over $\BR^n$.

We have a natural action of $\GL_n(F)$ on $\widetilde{\FB}(\GL_n(F))$, given by $(g\cdot \alpha)(v)=\alpha(gv)$.

Let $F'$ be a valued field extension of $F$. We also have an action of the Galois group $\Gal(F'/F)$ on $\widetilde{\FB}(\GL_n(F'))$, through the action of $\GL_n(F')$.

\begin{proposition}[{\cite[Theorem 12.9.2]{KP:BTT}}]
\label{prop:descent-BT}
There is a  natural identification between the sets  $\widetilde{\FB}(\GL_n(F))$ and $\widetilde{\FB}(\GL_n(F'))^{\mathrm{Gal}(F'/F)}$. 
\end{proposition}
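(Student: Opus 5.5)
The plan is to make the identification explicit through the action of the Galois group on additive norms, and then prove that it is a bijection onto the fixed points. Let $\Sigma=\Gal(F'/F)$. The valuation on $F'$ extends that of $F$ and is $\Sigma$-invariant, by uniqueness of the extension (we take $F'/F$ finite Galois, as in our application $F'=F(t^{1/N})$). Let $\Sigma$ act on $F'^n=F^n\otimes_F F'$ through the second factor. For an additive norm $\beta$ on $F'^n$, define $(\sigma\cdot\beta)(v)=\beta(\sigma^{-1}v)$. The map in one direction is restriction: a $\Sigma$-fixed $\beta$ is sent to $\beta|_{F^n}$. The map in the other direction is extension of scalars: a norm $\alpha$ with splitting basis $v_1,\dots,v_n$ of $F^n$ is sent to
\[
\alpha'\Bigl(\sum_i\lambda_iv_i\Bigr)=\min_i\set{v(\lambda_i)+\alpha(v_i)}, \qquad \lambda_i\in F'.
\]

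First I check that $\alpha'$ is well defined. Any two splitting bases of $\alpha$ are related by a matrix $g$ that is block upper triangular with respect to the filtration by the lattices $L_{\alpha,r}$. Equivalently, $\alpha'$ is the unique norm splitting over $v_1,\dots,v_n$ whose values on the $v_i$ are $\alpha(v_i)$, and this norm can be characterised intrinsically: its balls are $L_{\alpha',r}=\sum_s L_{\alpha,s}\otimes R'_{\,r-s}$, where $R'_{c}=\set{\lambda\in F'\mid v(\lambda)\geq c}$. Since the $v_i$ lie in $F^n$ and are therefore $\Sigma$-fixed, $\alpha'$ is $\Sigma$-invariant, and its restriction to $F^n$ is $\alpha$. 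Both maps are $\GL_n(F)$-equivariant, which is also what ``natural'' refers to.

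The main obstacle is the other composite: showing that a $\Sigma$-fixed $\beta$ splits over a basis contained in $F^n$. I would argue with lattices. Each ball $L_{\beta,r}$ is an $R'$-lattice (by the splitting assumption) and is $\Sigma$-stable. Galois descent for modules over $R'$ is not étale in the ramified case, so I would instead use that $L_{\beta,r}^{\Sigma}=L_{\beta,r}\cap F^n$ is an $R$-lattice. It is finitely generated because it sits inside an $R$-lattice spanned by a splitting basis scaled suitably, and it spans $F^n$ because the trace $\frac{1}{|\Sigma|}\sum_{\sigma}\sigma$ maps $L_{\beta,r}$ into it (characteristic zero) and is surjective onto $F^n$ after inverting a uniformizer. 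Hence $\alpha=\beta|_{F^n}$ has lattice balls, and by \cite[Lemma 15.1.10]{KP:BTT} it admits a splitting basis. Its values lie in $\BQ$ because those of $\beta$ do.

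It then remains to show $\beta=\alpha'$. Both norms are $\Sigma$-invariant and agree on $F^n$. Given $w=\sum\lambda_iv_i\in F'^n$, the inequality $\beta(w)\geq\alpha'(w)$ follows from the ultrametric property. For the reverse inequality, I pick a basis $e_1,\dots,e_m$ of $F'$ over $F$ that splits the valuation; this is possible since $F'/F$ is finite and, in our case, $t^{j/N}$ works. Writing $w=\sum_j e_j w_j$ with $w_j\in F^n$, I recover each $w_j$ from $w$ by trace-duality: $w_j=\sum_\sigma c_{j,\sigma}\,\sigma(w)$ with $v(c_{j,\sigma})\geq -v(e_j)$, using tameness. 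This gives $\alpha(w_j)\geq\beta(w)-v(e_j)$, hence $\alpha'(w)=\min_j\bigl(v(e_j)+\alpha(w_j)\bigr)\geq\beta(w)$. In general I would simply cite \cite[Theorem 12.9.2]{KP:BTT} for this tame descent step, since the statement is quoted from there.
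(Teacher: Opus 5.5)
Your proof is correct, but it takes a different route from the paper, which gives no argument and simply imports tame Galois descent for Bruhat--Tits buildings from Kaletha--Prasad. That is a theorem for arbitrary reductive groups and tamely ramified extensions; applying it to the norm model $\widetilde{\FB}(\GL_n(F))$ also uses the identification of norms with points of the building.

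You stay entirely inside the norm model:
\begin{enumerate}
\item extension of scalars $\alpha\mapsto\alpha'$, made basis-independent by the formula $L_{\alpha',r}=\sum_s L_{\alpha,s}R'_{r-s}$;
\item restriction $\beta\mapsto\beta|_{F^n}$, which is split because $L_{\beta,r}\cap F^n$ is an $R$-lattice, the trace giving the spanning;
\item the equality $\beta=\alpha'$, via the explicit projectors $w_j=\frac{1}{N}t^{-j/N}\sum_{\zeta\in\mu_N}\zeta^{-j}\sigma_\zeta(w)$.
\end{enumerate}
This gives a self-contained, elementary proof that shows exactly where tameness enters. The coefficients have valuation exactly $-j/N$ only because $N$ is a unit in the residue field, and this is the step that can fail for wildly ramified extensions. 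What the citation buys instead is generality: all reductive groups, all tame extensions, and the non-discrete variant alluded to in the proof of Proposition~\ref{prop:descent-BT-Finf}.

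In the setting where the paper actually uses the statement, $F=K\llp t\rrp$ with $K$ algebraically closed of characteristic zero. There every finite extension is some $K\llp t^{1/N}\rrp$ by Newton--Puiseux, so your Kummer computation already covers every case needed and your fallback citation is unnecessary.

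Two small points to tidy:
\begin{itemize}
\item Finite generation of $L_{\beta,r}\cap F^n$ should be argued as follows: $L_{\beta,r}\subseteq\varpi^{-m}R'^n$, hence $L_{\beta,r}\cap F^n\subseteq\varpi^{-m}R^n$, and $R$ is Noetherian. A splitting basis of $\beta$ lies in $F'^n$, so it does not span an $R$-lattice of $F^n$ containing the intersection.
\item The remark that two splitting bases differ by a block upper triangular matrix is imprecise as stated; what is true is a lower bound on the valuations of its entries. It is also superfluous, since your lattice formula already gives independence of the splitting basis.
\end{itemize}
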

Note that the identification in Proposition \ref{prop:descent-BT} does not preserve the simplicial structure, since some vertices are not sent to vertices.

From now on we restrict our attention to rational points of buildings, that is, rational additive norms. We write $\widetilde{\FB}^\BQ(\GL_n(F))$ (resp. $\widetilde{\FB}^\Gamma(\GL_n(F))$)  for the extended building consisting of rational additive norms in $\widetilde{\FB}(\GL_n(F))$ (resp. $\Gamma$-additive norms).

\begin{proposition}
\label{prop:descent-BT-Finf}
There is a  natural identification between the sets  $\widetilde{\FB}^\BQ(\GL_n(F))$ and $(\GL_n(F_\infty)/\GL_n(R_\infty))^{\mathrm{Gal}(F_\infty/F)}$. 
\end{proposition}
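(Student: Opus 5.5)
We pass through the finite levels $F_N=F(t^{1/N})$, writing $F_\infty=\bigcup_N F_N$ and $R_\infty$ for its valuation ring. By Proposition \ref{prop:descent-BT} applied to each finite Galois extension $F_N/F$, we have $\widetilde{\FB}(\GL_n(F))\cong\widetilde{\FB}(\GL_n(F_N))^{\Gal(F_N/F)}$. So the plan is: identify rational norms on $F^n$ with $\Gal$-invariant vertices (lattices) over some $F_N$, and then pass to the colimit over $N$ (ordered by divisibility).

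First we identify vertices. The vertices of $\widetilde{\FB}(\GL_n(F_N))$ are the $R_N$-lattices, i.e.\ $\GL_n(F_N)/\GL_n(R_N)$, viewed as additive norms with values in $\frac1N\BZ$ whose lattice chain consists of the multiples $t^{j/N}L$ of a single lattice $L$. More precisely, a lattice $L$ gives the norm $\alpha_L(v)=\sup\{j/N \mid v\in t^{j/N}L\}$. For $N\mid N'$ the map $L\mapsto L\otimes_{R_N}R_{N'}$ is compatible with $\alpha$. Hence
\[
\colim_N \GL_n(F_N)/\GL_n(R_N)=\GL_n(F_\infty)/\GL_n(R_\infty),
\]
and this identifies with the set of $\BQ$-valued norms on $F_\infty^n$ that split over a basis defined over some $F_N$ and are ``integral'', in the sense that the chain is generated by a single lattice.

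Next we descend rational norms. Given $\alpha\in\widetilde{\FB}^\BQ(\GL_n(F))$, take a splitting basis $v_1,\dots,v_n$ of $F^n$ (it exists by completeness, \cite[Proposition 15.1.11]{KP:BTT}), with values $\alpha(v_i)=a_i/N$ for a common denominator $N$. Extend $\alpha$ to $F_N^n$ by splitting over the same basis. The result is $\Gal$-invariant and takes values in $\frac1N\BZ$, so it equals $\alpha_L$ for the lattice
\[
L=\bigoplus_i t^{a_i/N}R_N v_i .
\]
This lattice is $\Gal(F_N/F)$-stable, because $\sigma(t^{a/N})\in t^{a/N}R_N^\times$. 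The map $\alpha\mapsto[L]$ is compatible with enlarging $N$, so it gives a map to $(\GL_n(F_\infty)/\GL_n(R_\infty))^{\Gal}$. It is injective because $\alpha$ is recovered from $\alpha_L$ by restriction to $F^n$.

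Surjectivity is the main obstacle. Take a Galois-invariant class $gR_\infty$; it is represented by an $R_N$-lattice $L$ for some $N$, with $\sigma L=L$ for all $\sigma\in\Gal(F_N/F)$. Invariance of the class at level $\infty$ does imply invariance at level $N$, since $L\otimes R_\infty$ determines $L$ as its intersection with $F_N^n$. Then $\alpha_L$ is a $\Gal$-fixed point of $\widetilde{\FB}(\GL_n(F_N))$, so by Proposition \ref{prop:descent-BT} it comes from a norm $\alpha$ on $F^n$. That $\alpha$ is rational, since its values lie in $\frac1N\BZ$. It remains to check that the extension of $\alpha$ constructed in the previous step returns $\alpha_L$, and this is exactly the uniqueness in the descent identification.

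If one wants to avoid citing Kaletha--Prasad here, the check can be done directly. Use tame Galois descent for the $R_N$-module $L$ with semilinear $\mu_N$-action: $L$ decomposes along the eigenspaces of $t^{j/N}$, giving
\[
L=\bigoplus_{j=0}^{N-1}t^{j/N}L_j, \qquad L_j\subset F^n \text{ an } R\text{-lattice},\quad L_0\supseteq L_1\supseteq\dots\supseteq L_{N-1}\supseteq tL_0 .
\]
This is a periodic lattice chain over $F$. A common adapted basis of this chain then provides the splitting basis needed above.
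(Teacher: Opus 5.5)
Your proposal is correct and follows the same route as the paper's brief proof. You write the $\Gal(F_\infty/F)$-invariant classes as the union over $N$ of $\Gal(F_N/F)$-stable $R_N$-lattices, identify these with norms valued in $\frac1N\BZ$, and apply Proposition~\ref{prop:descent-BT} at each finite level; you just supply the details the paper leaves implicit, plus two harmless sign slips: with $\alpha(v_i)=a_i/N$ the lattice is $L=\bigoplus_i t^{-a_i/N}R_Nv_i$, and the pieces $L_j=t^{-j/N}L\cap F^n$ increase, $L_0\subseteq L_1\subseteq\dots\subseteq L_{N-1}\subseteq t^{-1}L_0$.
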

\begin{proof}
This follows either from a more general version of Proposition~\ref{prop:descent-BT} for buildings over non-discretely valued fields, or directly from Proposition~\ref{prop:descent-BT}, since 
\[(\GL_n(F_\infty)/\GL_n(F))^{\mathrm{Gal}(F_\infty/F)}=\bigcup_r (\GL_n(F_r)/\GL_n(F))^{\mathrm{Gal}(F_r/F)}\] and the observation that vertices are sent to rational additive norms by Proposition~\ref{prop:descent-BT}.
\end{proof}

Let $B_{<r}=\set{\alpha\in \widetilde{\FB}^\Gamma(\GL_n(F))\mid d(\alpha,\id)< r}$,
where $\id$ is the point in $\widetilde{\FB}^\Gamma(\GL_n(F))$ corresponding to the identity matrix.

\begin{lemma}
\label{lem:Br-def}
The set $B_{<r}$ is definable in $\Gamma\cup k$, and for each $i$, the set of $i$-simplices of $B_{<r}$ is definable. The same is true of $B_{\leq r}$.
\end{lemma}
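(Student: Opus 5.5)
We describe points of $\widetilde{\FB}^\Gamma(\GL_n(F))$ through their splitting bases, so that everything lives in the sorts $\Gamma$ and $k$. By the discussion above, every $\Gamma$-additive norm $\alpha$ splits over some basis, so $\alpha = g\cdot\alpha_\gamma$ with $g\in \GL_n(F)$ and $\alpha_\gamma$ the standard diagonal norm $\alpha_\gamma(\sum\lambda_ie_i)=\min_i(v(\lambda_i)+\gamma_i)$, $\gamma\in\Gamma^n$. The distance to $\id$ is computed inside a common apartment. We first conjugate by $\GL_n(R)$, which fixes $\id$ and preserves $d$ (Cartan/Iwasawa-type decomposition: any point and $\id$ lie in an apartment containing $\id$, which is $\GL_n(R)$-conjugate to the standard one). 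This gives $d(\alpha,\id)=\|\gamma\|$ for the Euclidean norm of a suitable $\gamma\in\Gamma^n$, determined up to the Weyl group $S_n$. Hence $B_{<r}$ is the $\GL_n(R)$-saturation of the definable polytope $\{\gamma\in\Gamma^n:\sum\gamma_i^2<r^2\}$ in the standard apartment.

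To see this inside $\Gamma\cup k$, we use the vertex/lattice-chain description. A point $\alpha$ is the same as a graded periodic lattice chain $(L_{\alpha,s})_s$. Taking $d(\alpha,\id)<r$ bounds the valuations involved by $|\gamma_i|<r$, so every $L_{\alpha,s}$ with $s$ in one period satisfies $t^{N}R^n\subseteq L_{\alpha,s}\subseteq t^{-N}R^n$ for $N$ depending only on $r$. Such lattices correspond to $R$-submodules of $t^{-N}R^n/t^NR^n$. Using the filtration by powers of $t$, and working with the $\rv$-/residue structure, these are coded by finite tuples of subspaces of $k^{m}$ (flags in the graded pieces) together with gluing data in $k$. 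This is a constructible family over $k$ indexed by bounded tuples from $\Gamma$. The grading $c$ is a tuple in $\Gamma$, and the condition $d<r$ is a condition on the elementary divisors $\gamma$ of the chain relative to $R^n$. Those divisors are read off from the dimensions of the images of $L\cap t^jR^n$ in the graded pieces, which is definable in $k$, and $\|\gamma\|<r$ is then definable in $\Gamma$. Identifying points with the same graded chain gives an imaginary; equality of chains is definable. This exhibits $B_{<r}$ as definable in $\Gamma\cup k$.

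For $i$-simplices, recall that two points lie in the same simplex iff they have the same underlying (ungraded) chain. Faces correspond to inclusions of chains, and the rank is the length of the induced partial flag in $k^n$ modulo $\mathfrak{M}L_0$. The set of underlying chains of points of $B_{<r}$, together with the rank function, is therefore definable from the coding above: the rank is the number of distinct subspaces in a definable flag. So the set of $i$-simplices is definable for each $i$. The case $B_{\le r}$ is identical with $\le$ in the $\Gamma$-condition.

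The main obstacle is the coding step. We must show uniformly that lattices between $t^N R^n$ and $t^{-N}R^n$, for $F$ a general model rather than a discretely valued one, are parametrized by $k$-data plus $\Gamma$-data. I would handle it using the stable embeddedness and orthogonality of $k$ and $\Gamma$ in $\ACVF$. Concretely, I would pass to diagonalizing bases: each such lattice is $g\cdot\bigoplus t^{\gamma_i}R$ with $g\in\GL_n(R)$, and the stabilizer data is governed by the reduction of $g$ in a parahoric quotient, which is a $k$-variety (a partial flag variety) depending definably on the order type of $\gamma$.
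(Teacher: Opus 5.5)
Your reduction of $B_{<r}$ to the $\GL_n(R)$-saturation of a region of the standard apartment is fine. But the step everything rests on is never carried out, namely coding the lattices in the window $t^NR^n\subseteq L\subseteq t^{-N}R^n$ by data in $k$ and $\Gamma$. The mechanism you propose for it in your last paragraph is false.

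The $\GL_n(R)$-orbit of $L_\gamma=\bigoplus_i t^{\gamma_i}Re_i$ is $\GL_n(R)/H_\gamma$, where $H_\gamma=\{g\in\GL_n(R) : v(g_{ij})\geq \gamma_i-\gamma_j\}$. This quotient is a partial flag variety over $k$ only when $\max_{i,j}|\gamma_i-\gamma_j|\leq 1$, i.e.\ for points in the closed star of $\id$. That is exactly the paper's base case $B_{<1}=\bigsqcup_i\FP_i\times\Delta_i$ and nothing more. Already for $n=2$ and $\gamma=(0,2)$ one gets $H_\gamma=\{g_{21}\in t^2R\}$, and the orbit is $\BP^1(R/t^2R)$, a $2$-dimensional variety. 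By contrast, $\gamma=(0,1)$, which has the same order type, gives $\BP^1(k)$.

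Worse, the ``general model'' case you single out as the main obstacle is a counterexample rather than an obstacle. Suppose the value group is dense. Then the lattices $R(1,x)+t^aR^2$ with $0<a<r$ lie in $B_{<r}$ and are indexed by $R/t^aR$. This set is not in definable bijection with any subset of $k^m\times\Gamma^\ell$. Indeed, a definable map $R/t^aR\to\Gamma^\ell$ is constant on the generic type, and that fibre contains pairs of balls at every distance in $(0,a)$; orthogonality of $k$ and $\Gamma$ forbids this for a set internal to $k$. So the lemma can only be proved for discretely valued $F=K\llp t\rrp$ and its rational points (cf.\ Proposition~\ref{prop:descent-BT-Finf}). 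The paper's proof tacitly uses this as well, since it relies on the simplicial structure and on $\BP^1(k)$ being the link of a panel.

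In that setting your strategy can be completed, and it gives a genuinely different proof from the paper's.
For $R=K\llb t\rrb$, the quotient $t^{-N}R^n/t^NR^n$ is $K^{2Nn}$, with $t$ acting by a fixed nilpotent operator defined over $k$. Lattices in the window are then exactly the $t$-stable subspaces, which form a closed subvariety of a Grassmannian, uniformly in $K$. A period $L_0\supsetneq\dots\supsetneq L_{m-1}\supsetneq tL_0$ of a periodic chain is a point of a closed subvariety of a partial flag variety, and the grading is a tuple in $\Gamma$. Elementary divisors are read off from ranks, as you say, and simplices from the ungraded chain.

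This describes all of $B_{<r}$ at once as a $k$-variety times $\Gamma$-data. The paper instead argues by induction on $r$, attaching at each step the $\BA^1(k)$ chambers beyond each outer panel. Your route avoids the shell-by-shell distance facts that this induction needs.

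Finally, $\{\gamma:\sum_i\gamma_i^2<r^2\}$ is neither a polytope nor definable in the divisible ordered group $\Gamma$, whose definable sets are semilinear. Take instead a $W$-invariant polyhedral distance such as $d(\alpha,\beta)=\sup_{w\neq 0}|\alpha(w)-\beta(w)|$. In a common apartment it equals $\max_i|\gamma_i-\gamma'_i|$, and the lattices in $B_{\leq r}$ are then exactly those with $t^rR^n\subseteq L\subseteq t^{-r}R^n$.
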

\begin{proof}
We work by induction on $r$. 
Write $B_{<r}$ as a union over apartments containing $\id$. Each piece is definable since it is identified with the set of points in $\Gamma^n$ of norm less than $r$. Moreover, each piece contains only finitely many vertices, since those are identified with integer points in $\Gamma^n$. In particular, all simplices in $B_{<1}$ are adjacent to $\id$. Recall that for $i\geq 1$, the set of $i$-simplices adjacent to a given vertex is identified with a partial flag variety $\FP_i$ over $k^n$. Hence if $\Delta_i$ denotes the standard open $i$-simplex in $\Gamma^i$, and we set by convention $\FP_0=\set{1}$, $B_{<1}$ can be written as a disjoint union 
\[B_{<1}=\bigsqcup_{0\leq i \leq n}\FP_i\times\Delta_i,\]
hence is definable. 

Assume now $B_{<r}$ is definable, as well as its simplices. Then the set of $n-1$ simplices of $B_{<r+1}$ at distance $r$ of $\id$ is in bijection with the set of $n$-simplices of $B_{<r}$ at distance between $r-1$ and $r$ of $\id$, hence is definable by induction. Each such $n-1$ simplex, has $\BP^1(k)$ $n$ simplices adjacent, of which one of them is at distance between $r-1$ and $r$ of $\id$, the other $\BA^1(k)$ being at distance between $r$ and $r+1$. Hence the set of $n$ simplices at distance between $r$ and $r+1$ is definable. We see similarly that the set of $i$ simplices at distance between $r$ and $r+1$ is definable, hence $B_{<r+1}$ is definable. 
\end{proof}

\begin{proposition}
\label{prop:Volbuilding}
Let $X$ be a definable subset of $\GL_n(F_\infty)/\GL_n(R_\infty))^{\mathrm{Gal}(F_\infty/F)}$. Assume that $X$ is definably compact and definably connected. Then $\Vol(X)=1$. 
\end{proposition}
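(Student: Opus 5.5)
We reduce the computation of $\Vol(X)$ to an o-minimal Euler characteristic computation on the value-group side, using the explicit description of the building obtained in Lemma \ref{lem:Br-def}.

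First, since $X$ is definably compact it is bounded, so $X\subset B_{\leq r}$ for some $r$. Using the simplicial structure, we decompose $X$ into a finite disjoint union of definable pieces indexed by simplex types. Each piece has the form $\{(P,\delta)\mid P\in \FP,\ \delta\in \Delta_P\}$, where $\FP$ ranges over a definable family of flag-variety data over $k$: the choice of the simplex, parametrised as in the inductive step of Lemma \ref{lem:Br-def} by products of partial flag varieties and affine spaces $\BA^1(k)$. The fibre $\Delta_P$ is a definable subset of an open simplex in $\Gamma^i$. Viewed as a subset of $\VF^\bullet$ via the coset description $\GL_n(F_\infty)/\GL_n(R_\infty)$ (Proposition \ref{prop:descent-BT-Finf}), each such piece is in definable bijection with a lift of an object of $\RV$ of the form $Y\times\tilde\Delta$. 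Here $Y$ is residue-field data and $\Delta\subset\Gamma^i$. Applying $\FE_b$ then gives
\[
\Vol(X)=\sum_{\text{pieces}} [Y]\cdot \eu_b(\Delta)\cdot(\text{correction}).
\]
We will arrange the normalisation so that the contribution of a cell with $\Gamma$-part $\Delta$ over residue data $Y$ is $[Y]\,\eu(\Delta)$. This is the point of taking the volume of the quotient (counting) measure, i.e. $\FE_b$ applied to $\RES$-classes and $\eu_b$ applied to $\Gamma$-parts. Because $X$ is definably compact, $\eu_b(\Delta)=\eu(\Delta)$ with no boundary effects.

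Second, we argue that the residue-field classes collapse. Working in $\Ko(\Var_k)$, we would like to show that $\Vol(X)$ equals the o-minimal Euler characteristic of a "tropical" shadow of $X$. The plan is to exploit the fact that fibres over $\Gamma$-points are unions of $k$-varieties whose classes combine via the relation $[\BP^1]=1+[\BA^1]$, as in the inductive step of Lemma \ref{lem:Br-def}. An alternative that avoids bookkeeping is to use that $\Vol$ is a ring morphism invariant under definable bijections, together with a definable homotopy/contraction. We then show that the Hrushovski--Kazhdan Euler characteristic $\Vol$ restricted to definable subsets of the building (a $\Gamma\cup k$-definable set) is a definable homotopy invariant. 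For this we use the fact that $\Vol$ agrees on such sets with the Euler characteristic of Hrushovski--Loeser stable completions, or equivalently the o-minimal Euler characteristic of the underlying $\Gamma$-internal skeleton. Definable compactness and connectedness, combined with the CAT(0)/geodesic convexity of the building, give a definable contraction of $X$ onto a point. Then $\Vol(X)=\Vol(\mathrm{pt})=1$.

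The main obstacle is justifying that $\Vol$ is a homotopy invariant for definable subsets of the building, or equivalently that the cell-by-cell sum telescopes to $1$. If the homotopy route fails, we proceed by induction on the number of simplices meeting $X$. We remove a maximal simplex $\sigma$ from the boundary and check that $\Vol(X)=\Vol(X\setminus\sigma)$: the removed piece is a product of residue data and a half-open cell of Euler characteristic $0$. Connectedness is what guarantees that such a free face exists at each step, until a single point remains, whose volume is $1$.
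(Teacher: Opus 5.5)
\textbf{There is a genuine gap.} Your argument rests on the claim that definable compactness and connectedness, together with the CAT(0) geometry of the building, give a definable contraction of $X$ to a point. That claim is false. Geodesic convexity of $\widetilde{\FB}^\Gamma(\GL_n(F))$ contracts the building, or a \emph{convex} subset of it, along geodesics. For a general $X$ the geodesic homotopy leaves $X$.

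Here is a concrete example. In the standard apartment of $\widetilde{\FB}^\Gamma(\GL_2(F))$ take
\[
X=\set{\mathrm{diag}(t^{a},t^{b})\,\GL_2(R_\infty)\mid (a,b)\in \partial([0,1]^2)}.
\]
This $X$ is Galois invariant, closed, bounded, definably connected and $\Gamma$-internal. With the convention $\Vol=\eu$ on such sets (as in Example~\ref{ex:toroidal}) one gets $\Vol(X)=\eu(\partial[0,1]^2)=4-4=0$.

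The same example defeats your fallback:
\begin{itemize}
\item Connectedness does not provide free faces. A triangulated circle has none, and anything collapsible has Euler characteristic $1$.
\item A definable $X$ is not a subcomplex anyway, so $X\cap\sigma$ is an arbitrary semilinear set rather than a half-open cell.
\end{itemize}
Two further steps are only asserted, not proved: the homotopy invariance of $\Vol$, and its identification with the Euler characteristic of a $\Gamma$-skeleton. These are exactly where the residue-field families (flag varieties, $\BA^1(k)$-families of chambers) would have to be handled.

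\textbf{What is missing, and how the paper proceeds.} Compactness plus connectedness cannot suffice; some convexity-type property of $X$ is the missing input. The paper translates so that $\id\in X$ and inducts on the least $r$ with $X\subset B_{\leq r}$. It shows $\Vol(X\cap C_{r-1<,\leq r})=0$ by writing this shell as a family over $Z\times\BA^1(k)$ of pieces $X\cap(\bar C_z\setminus\bar z_0)$, each of volume $1-1=0$. Your idea of removing a half-open cell of Euler characteristic $0$ over residue data is close in spirit to this.

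Be aware that the paper's argument uses the same faulty principle. At the step concerning $X\cap\bar C_z$, it declares that set connected and hence of volume $1$. But closed, bounded, connected subsets of $\Gamma^r$ need not have Euler characteristic $1$ for $r\geq 2$, as the circle above shows.

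The fibres to which the statement is applied in Proposition~\ref{prop:vol:1} presumably do have the extra structure needed. One expects the $\overline{ST}$-argument of Lemma~\ref{lem:vol:Gmy} to give geodesic convexity, not just connectedness. A hypothesis of that kind is what a cancellation argument like the paper's really needs. It is also what your contraction would need, and even then you must prove, not assume, that $\Vol$ is invariant under it.
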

The intuition behind the proof is that such an $X$ should be contractible.
\begin{proof}
By Proposition \ref{prop:descent-BT-Finf}, one can view $X$ as a definable subset of $\widetilde{\FB}^\Gamma(\GL_n(F))$. 
Since $X$ is definably compact, it is bounded, hence is contained in $B_{\leq r}$ for some $r$. Up to translating $X$, we can assume that $\id\in X$. It follows from  Lemma~\ref{lem:Br-def} that $B_{\leq r}$ is definable. Assume $r$ is minimal among integers such that $X\subset B_{\leq r}$. If $r=0$, then $X=\set{\id}$ and we are done. Assume that $r\geq 1$. We shall show that $X\cap B_{\leq r-1}$ is closed, bounded and definably connected and $\Vol(X\cap B_{\leq r-1})=\Vol(X)$. Hence we can conclude inductively by applying the proposition to $X\cap B_{\leq r-1}$. 

The set $X \cap B_{\leq r-1}$ is clearly bounded and closed. It is also connected, since $X$ is connected and an intersection of apartments is connected.

So it remains to show that $\Vol(X\cap B_{\leq r-1})=\Vol(X)$. We will rather show that $\Vol(X\cap C_{r-1<,\leq r})=0$, where 
$C_{r-1<,\leq r}=\set{\alpha\in \widetilde{\FB}^\Gamma(\GL_n(F))\mid r-1< d(\alpha,\id)\leq r}$.

Recall the following  general properties of buildings.
If two chambers share two faces, they are equal. A chamber at distance between $r-1$ and $r$ of $\id$ has exactly one face at distance $r-1$ of $\id$. A face at distance $r-1$ of $\id$ has $\BA^1(k)$ adjacent chambers at distance between $r-1$ and $r$ and one adjacent chamber at distance between $r-2$ and $r-1$. 

From those properties, it follows that $X\cap C_{r-1<,\leq r}$ is a disjoint union
\[
X\cap C_{r-1<,\leq r}=\bigsqcup_{(z_0,z_1)\in Z\times \BA^1(k)} X\cap (\bar C_z\backslash \bar z_0),
\]
where $Z$ is the set of faces at distance $r-1$ of $\id$, and for $z=(z_0,z_1)$, $C_z$ is a chamber at distance between $r-1$ and $r$ which is adjacent to $z_0$, and $\bar C_z$ is its closure. The set $Z$ is definable by Lemma~\ref{lem:Br-def}. The set $X\cap \bar C_z$ is closed, bounded and connected inside $\Gamma^r$, hence if it is non-empty, $\Vol(X\cap \bar C_z)=1$. But if $X\cap \bar C_z\neq \emptyset$, then $X\cap \bar z_0\neq \emptyset$ by connectedness of $X$, hence $\Vol(X\cap \bar z_0)=1$ and $X\cap (\bar C_z\backslash \bar z_0)=0$. 
Hence $\Vol(X\cap C_{r-1<,\leq r})=0$. 
\end{proof}

\section{Generalized orbifold formula}\label{gof}

\subsection{The main result}
Let $\mcM$ be a smooth cd-quotient stack over $k\llb{t}\rrb$ and  $D= \sum_{i\in I} u_i D_i$ a relative $\BQ$-Cartier divisor on $\mcM$ with $u_i< 1$ and simple normal crossing support. 
Let
\[ \pi: \mcM \longrightarrow M\]
be a morphism to a finite type scheme $M$ over $k\llb{t}\rrb$. Throughout this section have the following additional
\begin{assumption}\label{basas}\begin{enumerate}
\item The morphism $\pi$ is $\FS$-complete. 
\item\label{gtors} There exists a dense open $U \subset M$ such that $\pi^{-1}(U) \to U$ is an equivalence. 
\item\label{propex} For any algebraically closed field $K /k$ and any $x\in M(K\llb{t}\rrb) \cap U(K\llp{t}\rrp)$ there exists an integer $N \geq 1$ and an $\tilde{x} \in \mcM(D^{1/N})$ such that the following diagram commutes:
\[\xymatrix{
D^{1/N} \ar[d] \ar[r]^{\ \ \ \tilde{x}} & \mcM \ar[d]^\pi\\
\Spec(K\llb{t}\rrb) \ar[r]^{\ \ \ \ x} & M.}\]
 
\item\label{ginvb} Some power of $\omega_\mcM(D)$ is Zariski-locally trivial, where $\omega_\mcM$ denotes the canonical bundle of $\mcM$ i.e. the determinant of its cotangent complex. 

\end{enumerate}
\end{assumption}

\begin{rmk}\label{erm} If $ \pi: \mcM \to M$ is a good moduli space morphism satisfying (2), then (1) and (3) are automatically satisfied. Indeed, good moduli space morphisms are $\FS$-complete \cite[Proposition 3.44]{AHH} and (3) follows since $\mcM$ admits a birational morphism $\mcM' \to \mcM$ from a Deligne-Mumford stack \cite[Theorem 2.11]{ER21}. 
\end{rmk}

Under these assumptions we may define a motivic measure $|\omega_{M^\natural,D}|$ on the definable $M^\natural$ given by the assignment
\[ K/k \longmapsto  M(K\llb{t}\rrb) \cap U(K\llp{t}\rrp).\]

For the construction of $|\omega_{M^\natural,D}|$ consider a positive integer $r$ and a Zariski cover $\mcM = \bigcup_i \FV_i$, such that $\omega_\mcM(D)^{\otimes r}_{|\FV_i}$ is trivial. If we choose for each $i$ a trivializing section $\omega_i$ of $\omega_\mcM(D)^{\otimes r}$ on $\FV_i$, then $\omega_i$ restricts to a pluricanonical form on $U \cap \FV_i$ and thus defines a measure $|\omega_i|$ on the assignment $M^\natural_i$ defined by
\[ K/k \longmapsto  \{x\in M(K\llb{t}\rrb) \cap U(K\llp{t}\rrp) \ |\ \exists N\geq1 \colon x \text{ lifts to an } \tilde{x} \in \FV_i(D_K^{1/N})\}.\]

Note that by the Assumption \ref{basas}(3) we have a covering $M^\natural =  \cup_i M_i^\natural$.


\begin{lemma} The $ |\omega_i|$ glue to a motivic measure $|\omega_{M^\natural,D}|$ on $M^\natural =  \cup_i M_i^\natural$ independent of the choices of trivializing sections $\omega_i$. 
\end{lemma}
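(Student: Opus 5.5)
To prove the lemma I would show that on each overlap $M^\natural_i\cap M^\natural_j$ the two measures $|\omega_i|$ and $|\omega_j|$ agree. Independence of the trivializing sections follows by the same argument with $i=j$. Gluing along the finite cover $M^\natural=\cup_i M^\natural_i$ is then formal: it is enough to produce a finite definable partition of $M^\natural$ refining the cover, for instance $M^\natural_1$, $M^\natural_2\setminus M^\natural_1$, and so on, and to define $|\omega_{M^\natural,D}|$ piecewise. Compatibility on overlaps makes the result independent of the ordering.

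\textbf{Step 1: compare the two forms.} On $\FV_i\cap\FV_j$ both $\omega_i$ and $\omega_j$ trivialize the line bundle $\omega_\mcM(D)^{\otimes r}$. Hence $\omega_i=g_{ij}\,\omega_j$ with $g_{ij}$ an invertible function on $\FV_i\cap\FV_j$. If two different $r$'s occur, I first pass to a common multiple by taking tensor powers; this does not change $|\cdot|$, by the definition of $|f|^{1/N}$.

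\textbf{Step 2: reduce to checking $\val(g_{ij}(x))=0$.} Restricted to $U$, where $\pi$ is an equivalence, $\omega_i$ and $\omega_j$ are pluricanonical forms on a subvariety. So on $M^\natural_i\cap M^\natural_j$ we have $|\omega_i|=|g_{ij}\circ x|^{1/r}|\omega_j|$, where for $x\in U(K\llp t\rrp)$ the value $g_{ij}(x)\in K\llp t\rrp^\times$ is computed through the equivalence $\pi^{-1}(U)\cong U$. It therefore suffices to show $\val(g_{ij}(x))=0$ for every $x\in M^\natural_i\cap M^\natural_j$.

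\textbf{Step 3: use the lifts.} For such an $x$, Assumption \ref{basas}(3) and the definition of $M^\natural_i$ give lifts $\tilde x_i\in\FV_i(D_K^{1/N_i})$ and $\tilde x_j\in\FV_j(D_K^{1/N_j})$ of $x$. Pass to a common $N$ and compose with $\Spec(K\llb t^{1/N}\rrb)\to D^{1/N}$. This gives two arcs $\Spec(K\llb t^{1/N}\rrb)\to\mcM$ that agree on the generic point, since both restrict to $x$ over $U$, where $\pi$ is an equivalence.

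\textbf{Step 4 (main obstacle): the two lifts are isomorphic.} I must show that these two arcs are isomorphic as arcs. Then $\tilde x_i$ factors through $\FV_i\cap\FV_j$, so $g_{ij}(\tilde x_i)$ is a unit of $K\llb t^{1/N}\rrb$ and $\val(g_{ij}(x))=0$. This is exactly where $\FS$-completeness (Assumption \ref{basas}(1)) enters. Two extensions of the same generic point of $\mcM$ over the same point of $M$ can be glued into a morphism from $\oST_R\setminus 0$, with $R=K\llb t^{1/N}\rrb$: the two codimension-one pieces of $\oST_R\setminus 0$ are copies of $\Spec R$, glued along $\Spec$ of the fraction field. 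This morphism lies over $\oST_R\to\Spec R\to M$. By $\FS$-completeness it extends uniquely to $\oST_R\to\mcM$. Restricting to the origin gives a $\BG_m$-equivariant relation between the closed points of the two arcs, and the uniqueness of the extension should force the two arcs to coincide up to isomorphism, at least after replacing them by arcs whose images meet $\FV_i\cap\FV_j$.

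I expect the delicate part to be extracting from this extension the fact that $\tilde x_i$ itself lands in $\FV_j$, since $\FV_j$ is open and need not be saturated. If a direct argument fails, I would instead show that $\val(g_{ij})$ is constant along the $\BG_m$-orbit closure in $\oST_R$, because $g_{ij}$ is a unit on the extended family wherever it is defined, and conclude by comparing the valuations at the two endpoints.
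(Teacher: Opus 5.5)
Your Steps 1--3 match the paper's argument, but Step 4 has a genuine gap: what you try to prove there is false in general. Lifts of $x$ are far from unique (this non-uniqueness is exactly what the paper later measures with buildings), and $\FS$-completeness does not identify them. Concretely, take $\mcM=[\BA^2/\BG_m]$ with weights $(1,-1)$, $\pi(a,b)=ab$, $M=\BA^1$, $U=\BG_m$, $D=0$. Take the trivializing Zariski cover $\FV_1=\{a\neq 0\}$, $\FV_2=\{b\neq 0\}$, $\FV_3=\mcM$; note that $\FV_1$ and $\FV_2$ are not saturated.

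For $x=t$, the lifts $(1,t)\in\FV_1$ and $(t,1)\in\FV_2$ have different closed points, so they are not isomorphic. The $\FS$-completeness extension is $(s,\tau)\mapsto(s,\tau)$, sending the origin of $\oST_R$ to $B\BG_m$, and its uniqueness says nothing about the two arcs coinciding. Worse, every twisted lift $(a,b)$ at every level $N$ satisfies $ab=t$, so its closed point has $a_0b_0=0$. Hence no lift of $x$ lies in $\FV_1\cap\FV_2$, and your proposed ``replacement'' does not exist.

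Your fallback also fails as stated. The function $g_{ij}$ is only defined on the preimage of $\FV_i\cap\FV_j$ in $\oST_R$, which here is just the generic point. Note that the paper's one-line proof simply asserts that $x$ lifts to $(\FV_i\cap\FV_j)(D^{1/N})$, which is precisely the point you flagged. The example shows this assertion can fail for non-saturated charts; it is immediate when the $\FV_i$ are pulled back from opens of $M$. So the repair below is needed there too.

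The missing idea is to stop trying to put $\tilde x_i$ into $\FV_j$ and use a third chart instead. Let $y_0$ be the image of the origin under the extension $\oST_R\to\mcM$ that you correctly construct, and choose $k$ with $y_0\in\FV_k$. The space $|\oST_R|$ has four points: the generic point, the two special-fibre orbits, and the origin. The origin lies in the closure of each of them, so the preimage of $\FV_k$ is all of $\oST_R$.

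Hence both $\tilde x_i$ and $\tilde x_j$ factor through $\FV_k$ (in the example, through $\FV_3$). That is, $\tilde x_i\in(\FV_i\cap\FV_k)(R)$ and $\tilde x_j\in(\FV_j\cap\FV_k)(R)$. The triple intersection $\FV_i\cap\FV_j\cap\FV_k$ contains the common generic point of the arcs, and there $f_{ij}=g_{ik}/g_{jk}$. Since $g_{ik}(\tilde x_i)$ and $g_{jk}(\tilde x_j)$ are units of $R$ restricting to the values at that generic point, you get $\val f_{ij}(x)=0$.

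Equivalently, the pullback of $\omega_\mcM(D)^{\otimes r}$ to $\oST_R$ is trivial, being pulled back from $\FV_k$. Therefore the two lattices it induces in the common generic fibre via $\tilde x_i$ and $\tilde x_j$ coincide. With this replacing Step 4, the rest of your argument, including the gluing along a finite definable partition, goes through.
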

\begin{proof} The argument is very similar to \cite[Proposition 3.1.2]{COW24}. Any two sections $\omega_i$ and $\omega_j$ differ on $\FV_i \cap \FV_j$ by an invertible function $f_{ij}$. We need to show that for every $K/k$ and every $x \in M_i^\natural \cap M_j^\natural$ we have $|f_{ij}(x)| =1$. This follows since $x$ lifts to an $\tilde{x} \in \FV_i\cap \FV_j(D^{1/N})$ for some $N\geq 1$ and $|f_{ij}(\tilde{x})| =1$ since $f_{ij}$ is invertible.
\end{proof}

We will express integrals with respect to $|\omega_{M^\natural,D}|$ on $M$ in terms of $I_{\mu_N}\mcM$ for all $N\geq 1$. Recall from Section~\ref{sec:vol-twist-arcs} that for any constructible subset $Z \subset I_{\mu_N}\mcM$ we define
\[ [Z]^{w,D} = \sum_{J \subset I}\sum_{(a,b)\in\BQ \times \BQ^J} \BL^{-a}[w^{-1}(a)\cap w_J^{-1}(b)\cap Z] \prod_{j\in J}\frac{(1-\BL^{-1})\BL^{u_jb_j}}{1-\BL^{-1+u_j}} .  \]

We also write $M^\natural_Z$ for the assignment
\[ K/k \longmapsto  \{ x \in M(K\llb{t}\rrb)^\natural \ |\ x_{|\Spec(K)} \in Z\}.\]


\begin{theorem}\label{mainlim}
Let $\mcM\to M$ be a morphism satisfying Assumption~\ref{basas}. Then for any constructible  subset $Z \subset M_k$, the series $\sum_{N\geq 1} [I_{\mu_N} \mcM_Z]^{w,D} \, T^N$ is rational of degree zero, hence its limit as $T\to+\infty$ exists formally and we have
\[
\int_{M^\natural_Z}|\omega_{M^\natural,D}|=-\lim_{T\to +\infty} \sum_{N\geq 1} [I_{\mu_N} \mcM_Z]^{w,D} \, T^N
\]
in $\Ko(\Var_k)_{loc}$, where $I_{\mu_N} \mcM_Z = \{(y,\phi) \in I_{\mu_N} \mcM\ |\ \pi(y) \in Z\}$.
\end{theorem}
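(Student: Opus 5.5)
The proof follows the strategy sketched in the introduction. It writes the volume of $M^\natural_Z$ as an integral over $M^\natural_Z$ of fiber volumes equal to one. It then expands each fiber volume as a limit of generating series over twisted arcs of fixed order, and switches limit and integral. Finally it computes the integrals of twisted arcs by Corollary \ref{cor:orbi-form}.

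\emph{Step 1: fibers of $\pi$.} Fix $x\in M^\natural_Z(K)$. Let $F_x$ be the set of twisted arcs over $x$ of all orders, i.e. the colimit over $N$ of the fibers $\pi_N^{-1}(x)\subset \mcM(D_K^{1/N})$, identified for $N\mid N'$ by pullback along $D^{1/N'}\to D^{1/N}$. Assumption \ref{basas}(3) ensures $F_x$ is non-empty.

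Using a Nisnevich cover $[X/\GL_d]\to\mcM$ and Lemma \ref{galco}, I would fix a lift to $X(K\llp t\rrp)$ of the generic point $x|_{K\llp t\rrp}\in U(K\llp t\rrp)\cong\mcM(K\llp t\rrp)$. Then $\pi_N^{-1}(x)$ is identified with a Galois-invariant subset of $\GL_d(K\llp t^{1/N}\rrp)/\GL_d(K\llb t^{1/N}\rrb)$: a ramified arc is a choice of integral structure on the torsor compatible with this generic lift. Passing to $N=\infty$ and using Proposition \ref{prop:descent-BT-Finf}, $F_x$ becomes a definable subset of the rational building $\widetilde{\FB}^\BQ(\GL_d(K\llp t\rrp))$.

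I would show this subset is definably compact and definably connected; this is where $\FS$-completeness enters. Two points of $F_x$ lie in a common apartment, and the segment between them corresponds, after base change to a ramified extension, to a map $\oST_R\setminus 0\to\mcM$ over $\oST_R\to M$. The map $\oST_R\to M$ is the constant family $x$, and the restriction to $\oST_R\setminus 0$ is given by the two arcs glued along the generic point. $\FS$-completeness extends this family, which shows that the whole segment lies in $F_x$, so $F_x$ is convex. Boundedness should follow from separatedness and finite type together with uniqueness in $\FS$-completeness.

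Proposition \ref{prop:Volbuilding} then gives $\Vol(F_x)=1$. Equipping the fibers with the form $\omega=\val(\det)$ induces the counting measure. The fibers are proper invariant, being unions of $\GL_d(\FO)$-orbits. So Proposition \ref{prop:vollimeq} yields
\[1=-\lim_{T\to+\infty}\sum_{N\geq1}\mu^\#(\pi_N^{-1}(x))T^N.\]
I expect the main obstacle to be this step: making the identification with the building definable and uniform in $x$, and deducing convexity from $\FS$-completeness. I would first treat $\mcM=[X/\GL_d]$ and then glue via \eqref{covem}.

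\emph{Step 2: Fubini.} For each $N$, apply Theorem \ref{thm:fubini} to $\pi_N\colon\mcM(D^{1/N})_Z\to M^\natural_Z$. I would check that the Leray quotient of $|\omega_D|$ on $\mcM(D^{1/N})$ by $\pi_N^*|\omega_{M^\natural,D}|$ is precisely the counting measure on the fibers. This holds because on $U$ the two forms agree, and the normalization $\BL^{w(\mathfrak{gl}_d)}/[\Cent(\phi)]$ in the definition of $|\omega_D|$ accounts for the $\GL_d(K\llb t^{1/N}\rrb)^\phi$-quotient. The result is
\[\int_{\mcM(D^{1/N})_Z}|\omega_D|=\int_{M^\natural_Z}\mu^\#(\pi_N^{-1}(x))\,|\omega_{M^\natural,D}|.\]

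\emph{Step 3: conclusion.} Proposition \ref{prop:defphinx} makes $(x,N)\mapsto\mu^\#(\pi_N^{-1}(x))$ a single constructible function on $M^\natural_Z\times\BZ_{>0}$. Proposition \ref{prop:switch:lim:int} then lets me integrate the identity of Step 1 over $M^\natural_Z$ and interchange limit and integral. The integrability hypotheses hold since $u_i<1$ and Corollary \ref{cor:orbi-form} gives finite values. Combining with Step 2, this gives
\[\int_{M^\natural_Z}|\omega_{M^\natural,D}|=-\lim_{T\to+\infty}\sum_{N\geq1}\Big(\int_{\mcM(D^{1/N})_Z}|\omega_D|\Big)T^N.\]
Corollary \ref{cor:orbi-form} with $Z$ replaced by $e_N^{-1}(I_{\mu_N}\mcM_Z)$ identifies the coefficients with $[I_{\mu_N}\mcM_Z]^{w,D}$. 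Rationality of degree zero of the series comes from Proposition \ref{prop:switch:lim:int}.
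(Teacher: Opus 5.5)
Your proposal is essentially the paper's proof, with the same steps in the same order: $\Vol(\pi^{-1}(x))=1$ via definable compactness, $\FS$-completeness and Proposition \ref{prop:Volbuilding} (this is Proposition \ref{prop:vol:1}), then the limit formula of Proposition \ref{prop:vol-lim}, the interchange of Proposition \ref{prop:switch:lim:int}, Fubini with Lemma \ref{lem:measfubini}, and finally Corollary \ref{cor:orbi-form}. The differences are minor: you identify the fiber with a subset of the building through a lift of the generic point $x_\eta$ that is rational over the field generated by $x$ (it exists by Hilbert 90 for $\GL_d$, which already settles the definability issue you flag and replaces Lemmas \ref{lem:fibsection} and \ref{lem:fibbijSn}), and you aim for convexity rather than the path-connectedness of Lemma \ref{lem:vol:Gmy}; boundedness, however, comes not from uniqueness in $\FS$-completeness but, as in Lemma \ref{lem:fibbounded}, from the fiber consisting of integral points of $X$ on $p^{-1}(x_\eta)$, which is a single closed free $\GL_d$-orbit by Assumption \ref{basas}(2), so boundedness in $X$ transfers to the building.
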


For convenience let us also spell out Theorem \ref{mainlim} for $D=0$, in which case we write $|\omega_{M^\natural}|$ instead of $|\omega_{M^\natural,0}|$.
For any constructible subset $Z \subset I_{\mu_N}\mcM$ we set 
\[ [Z]^{w}= [Z]^{w,0} = \sum_{a \in\BQ} \BL^{-a}[w^{-1}(a)\cap Z]. \]

\begin{corollary}\label{noD}Let $\mcM\to M$ be a morphism satisfying Assumption~\ref{basas}. Then we have for any constructible  subset $Z \subset M_k$
\[
\int_{M^\natural_Z}|\omega_{M^\natural}|=-\lim_{T\to +\infty} \sum_{N\geq 1} [I_{\mu_N} \mcM_Z]^{w} \, T^N
\]
in $\Ko(\Var_k)_{loc}$.
\end{corollary}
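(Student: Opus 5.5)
Corollary \ref{noD} is the special case $D=0$ of Theorem \ref{mainlim}, so the plan is simply to specialize. With $D=0$ the index set $I$ is empty, so Assumption~\ref{basas}(4) reduces to the condition that a power of $\omega_\mcM$ is Zariski-locally trivial. The measure $|\omega_{M^\natural,0}|$ is by definition $|\omega_{M^\natural}|$. What remains is to check that $[Z']^{w,0}=[Z']^{w}$ for every constructible $Z'\subset I_{\mu_N}\mcM$.

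For that check, I would expand the definition of $[Z']^{w,D}$ with $I=\emptyset$:

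\begin{enumerate}
\item The only subset $J\subset I$ is $J=\emptyset$.
\item In that case $\BQ^J$ is a point and $w_J$ is the constant map to it, so $w_J^{-1}(b)\cap w^{-1}(a)\cap Z'=w^{-1}(a)\cap Z'$.
\item The product over $j\in J$ is empty, hence equal to $1$.
\end{enumerate}

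The double sum therefore collapses to $\sum_{a\in\BQ}\BL^{-a}[w^{-1}(a)\cap Z']$, which is $[Z']^{w}$. This is consistent with Theorem~\ref{fixram}, since the product factor there likewise disappears when $I_x=\emptyset$.

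Applying this identity with $Z'=I_{\mu_N}\mcM_Z$ for each $N$, the generating series $\sum_{N\ge1}[I_{\mu_N}\mcM_Z]^{w,0}T^N$ and $\sum_{N\ge1}[I_{\mu_N}\mcM_Z]^{w}T^N$ coincide coefficientwise. Theorem~\ref{mainlim} shows that this series is rational of degree zero, so its limit as $T\to+\infty$ exists, and it gives the equality with $\int_{M^\natural_Z}|\omega_{M^\natural}|$ in $\Ko(\Var_k)_{loc}$. That is exactly the statement of Corollary \ref{noD}.

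No step here is a real obstacle. The only point needing care is the empty-index bookkeeping in the definition of $[Z']^{w,D}$, namely that $J=\emptyset$ contributes a single term with empty product equal to $1$. All the substance lies in Theorem~\ref{mainlim} itself.
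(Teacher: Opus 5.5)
Your proposal is correct and matches the paper, which obtains Corollary~\ref{noD} simply by specializing Theorem~\ref{mainlim} to $D=0$ and noting that $[Z]^{w,0}=[Z]^{w}$. The identification also holds if $D=0$ is written with a nonempty index set and all $u_i=0$: each factor $\frac{(1-\BL^{-1})\BL^{0}}{1-\BL^{-1}}$ equals $1$, and the strata indexed by $(J,b)$ reassemble to $[w^{-1}(a)\cap Z']$.
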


Finally we record the following lemma for later.

\begin{lemma}
\label{lem:measfubini} Consider $\pi_N:\FM(D_K^{1/n})^\natural \to M(K\llb{t}\rrb)^\natural$.
 Both spaces come with natural measures $|\omega_{\mcM,D}|$ and $|\omega_{M^\natural,D}|$ respectively and we have for every $x \in M(K\llb{t}\rrb)^\natural$ and every measurable $A \subset \pi_N^{-1}(x)$ the equality
\[\int_{A} \frac{|\omega_{\mcM,D}|}{\pi_N^{*}|\omega_{M^\natural,D}|} = \mu^\#(A).\]
\end{lemma}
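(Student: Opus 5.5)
The plan is to reduce to a global quotient by a Nisnevich cover and a trivializing chart. Then I would identify the fiber $\pi_N^{-1}(x)$ with a homogeneous space of a twisted form of $\GL_d$, and show that the quotient form is the Haar form normalized so that each point of the fiber has mass $1$.

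\emph{Reduction.} Both sides are additive in $A$ and compatible with finite definable partitions. Using \eqref{covem} and the definition of $|\omega_{\mcM,D}|$ by restriction, I may therefore assume $\mcM=[X/\GL_d]$. I may also assume $A\subset \overline{e}_N^{-1}(\phi)\cong X(K\llb{t^{1/N}}\rrb)^\phi/\GL_d(K\llb{t^{1/N}}\rrb)^\phi$ for a fixed $\phi$. Finally I may assume the lifts of $x$ lie in some $\FV_i(D_K^{1/N})$, so that $|\omega_{M^\natural,D}|=|\omega_i|^{1/r}$ near $x$. Over $U$ the map $X_U\to U$ is a $\GL_d$-torsor, since $\pi^{-1}(U)\to U$ is an equivalence by Assumption~\ref{basas}(2). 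Since $\omega_\mcM$ is the determinant of the cotangent complex $[\Omega_X\to \mathfrak{gl}_d^\vee]$, this gives a factorization of $r$-pluricanonical forms on $X_U$:
\[ \omega_{\tilde D}^{\otimes r} = q^*\omega_i \otimes (f\, dg)^{\otimes r}, \]
where $f\,dg$ is the (bi)invariant top form on $\GL_d$, i.e.\ $\det(g)^{-d}\bigwedge dg_{ij}$. Consequently, on $X(K\llb{t^{1/N}}\rrb)^\phi$ the Leray quotient of $|\omega_{\tilde D}|$ by $\pi^*|\omega_i|^{1/r}$ is, orbit by orbit, the pullback of the Haar measure $|f\,dg|$ on $\GL_d(K\llp{t^{1/N}}\rrp)^\phi$. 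The divisor $D$ does not contribute, because it lives on the boundary and $x$ generically lands in $U$.

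\emph{Fiber structure.} Fix a lift $\tilde x\in X(K\llb{t^{1/N}}\rrb)^\phi$ of $x$. The fiber of $X(K\llp{t^{1/N}}\rrp)^\phi$ over $x$ is the orbit $\tilde x\cdot \GL_d(K\llp{t^{1/N}}\rrp)^\phi$, and this orbit is simply transitive because we are over $U$. Intersecting with integral arcs and dividing by $\GL_d(K\llb{t^{1/N}}\rrb)^\phi$, we get
\[ \pi_N^{-1}(x)\cap\overline{e}_N^{-1}(\phi) \cong \{g\in \GL_d(K\llp{t^{1/N}}\rrp)^\phi \mid \tilde x g \text{ integral}\}/\GL_d(K\llb{t^{1/N}}\rrb)^\phi . \]
This is a subset of $G_\phi(F)/P_\phi$, where $G_\phi$ is the Galois twist of $\GL_d$ by $\phi$ (again a form of $\GL_d$ over $F=K\llp{t}\rrp$) and $P_\phi=\GL_d(K\llb{t^{1/N}}\rrb)^\phi$ is a parahoric subgroup. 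The quotient measure on $A$ is then $\frac{\BL^{w(\mathfrak{gl}_d)}}{[\Cent(\phi)]}$ times the Haar measure of the saturation of $A$ in $G_\phi(F)$. By Proposition~\ref{equivol} we have $\int_{P_\phi}|f\,dg|=\BL^{-w(\mathfrak{gl}_d)}[\Cent(\phi)]$, so the saturation of each point has normalized mass exactly $1$. Fubini (Theorem~\ref{thm:fubini}) applied to $G_\phi(F)\to G_\phi(F)/P_\phi$ then gives that the quotient measure of $A$ is its number of points, in the motivic sense.

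\emph{Matching with $\mu^\#$.} It remains to check that this agrees with the definition of $\mu^\#$: the function $\omega=\val(f)$ must induce the counting measure. This is the main obstacle, since the definition is phrased through a lift $\mathfrak L_n(W)$ and a Jacobian condition. I would first write $P_\phi$-invariant subsets of $G_\phi(F)$ via an Iwahori-type decomposition into lifts of $\RV$-sets; this is possible because $P_\phi$ is a union of balls. Then I would verify directly that $\val(f)$ plus the Jacobian equals $-\val$ of the coordinates. This is exactly the situation of the ``relevant example'' after the definition, namely a $\GL_d(\FO)$-invariant set with $\omega=\val(f)$ for the invariant form. If the direct check is awkward, I would instead use that $\mu^\#$ is independent of the choice of $\omega$ and compare volumes on a single coset $gP_\phi$, where both measures give $1$.
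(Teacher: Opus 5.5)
Your proposal follows essentially the same route as the paper's proof: reduce to $[X/\GL_d]$ and a fixed $\phi$, identify the Leray quotient with the $\GL_d$-invariant form along orbits, observe that the fibers of the map to the building (your $G_\phi(F)/P_\phi$, the paper's $h$) are free $P_\phi$-orbits of volume $\BL^{-w(\mathfrak{gl}_d)}[\Cent(\phi)]$ by Proposition~\ref{equivol}, which cancels the normalizing factor, and conclude by Fubini; the paper treats the identification of the resulting count with $\mu^\#$ no more explicitly than your last step does. One small correction: $D$ drops out of the quotient not because it lies over $M\setminus U$ (in general it need not), but because $\tilde D=q^*D$, so the twists by $D$ cancel in the ratio, which is exactly what your displayed factorization already encodes.
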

\begin{proof}
As before we may reduce to the case where $\FM \cong [X/\GL_d]$.  By definition of the measure on $\FM(D_K^{1/n})^\natural$ we need to consider the decomposition 
\[A=\bigsqcup_{[\phi] \in \Hom(\mu_N,\GL_d)/\GL_d}A^\phi, \]
where $A^\phi = A \cap \overline{e}^{-1}_N(\phi)$. If we write $q: X(K\llb{t^{1/N}}\rrb)^\phi  \to \overline{e}^{-1}_{N}(\phi)$ for the quotient then we have by definition 

\[\int_{A^\phi} \frac{|\omega_{\mcM,D}|}{\pi_N^{*}|\omega_{M^\natural,D}|} =\frac{\BL^{w(\mathfrak{gl}_d)}}{[\Cent(\phi)]}\int_{q^{-1}(A^\phi)} \frac{|\omega_{X,\tilde D}|}{\pi_N^{*}|\omega_{M^\natural,D}|}. \]

Now recall that $|\omega_{M^\natural,D}|$ is defined by integrating locally sections of some power of $\omega_\FM(D)$, while $|\omega_{X,\tilde D}|$ is given by locally integrating sections of some power of $\omega_X(\tilde D)$. Since $X \to \FM$ is a $\GL_d$ principal bundle the quotient $\frac{|\omega_{X,\tilde D}|}{\pi_N^{*}|\omega_{M^\natural,D}|} $ is thus given by locally integrating a nowhere vanishing $\GL_d$-invariant form $|\omega_d|$ on $q^{-1}(\pi_N^{-1}(x)^\phi)$. Consider the definable map
\[\tilde h= h \circ q \colon q^{-1}(\pi_N^{-1}(x)^\phi) \longrightarrow \mathcal{B}_{\GL_d}(K\llp{t}\rrp),\]
whose fibers are free $\GL_d(K\llb{t^{1/N}}\rrb)^\phi$ orbits. Since 
\[ \int_{\GL_d(K\llb{t^{1/N}}\rrb)^\phi} |\omega_{\GL_d}| = \BL^{-w(\mathfrak{gl}_d)}[\Cent(\phi)],\]
we thus deduce again by Fubini that 
\[\int_{q^{-1}(A^\phi)} \frac{|\omega_{X,\tilde D}|}{\pi_N^{*}|\omega_{M^\natural,D}|}  =  \BL^{-w(\mathfrak{gl}_d)}[\Cent(\phi)] \mu^\#(\tilde{h}(q^{-1}(A^\phi))) =  \BL^{-w(\mathfrak{gl}_d)}[\Cent(\phi)]  \mu^\#(h(A^\phi)).\]
Since the map $h$ is a definable bijection onto its image the lemma follows from summing over all $[\phi] \in \Hom_k(\mu_N,\GL_d)/\GL_d$.
\end{proof}


\subsection{Volume of fibers}
We study in this section how various volumes of the fibers of $\FM\to M$ behave in families, and establish the key Proposition~\ref{prop:vol-lim} relating the Hrushovski-Kazhdan volume of a fiber to the point counting of its rational points. We assume in this section that $\FM$ is a cd-quotient stack.  Since the arguments are local in the target $M$, we can assume throughout the proofs of this subsection that $\FM$ is a quotient stack $[X/\GL_d]$.




For $K$ algebraically closed, set $M(K\llb t \rrb)^\natural=M(K\llb t \rrb)\cap U(K\llp t \rrp)$, which is represented by a definable set $\M^\natural$. Then define $\FM(K\llb t^{1/N} \rrb)^\natural$ as the subset of $\FM(K\llb t^{1/N} \rrb)$ such that its image in $M(K\llb t \rrb)$ lands in $M(K\llb t^{1/N} \rrb)^\natural$. Define similarly $\FM(D_K^{1/N})^\natural$, $\FM(K\llb t^{1/\infty} \rrb)^\natural$, \emph{etc.} by requiring that their images by $\pi$ land in $M(K\llb t^{1/N} \rrb)^\natural$, resp. $M(K\llb t^{1/\infty} \rrb)^\natural$.

\begin{proposition}
\label{prop:FMsharpdef}
There is an $\ACVF_{k\llp t \rrp}$ imaginary set $\FM^\natural$ and a definable morphism $\pi\colon \FM^\natural\to M^\natural$ such that for every algebraically closed valued field $L$ with valuation ring $\FO_L$, $\FM^\natural(L)=\FM(\FO_L)^\natural$
and 
\[
\pi_L\colon \FM(\FO_L)^\natural \longrightarrow M(\FO_L)^\natural
\]
is the map induced by $\pi\colon \FM\to M$. 
\end{proposition}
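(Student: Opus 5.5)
We reduce to a global quotient $\FM=[X/\GL_d]$, as the paragraph before the statement permits. The Nisnevich cover $[X/\GL_d]\to\FM$ admits sections over a finite stratification $\FM=\bigsqcup\FM_i$. For an algebraically closed valued field $L$, a point of $\FM(\FO_L)$ is determined, up to the choice of stratum containing its closed point, by a lift to $[X/\GL_d](\FO_L)$, exactly as in \eqref{covem}. Hence it suffices to treat quotient stacks and glue the resulting imaginaries along the constructible stratification of the residue point; this gluing is definable in $\ACVF$. Independence of the presentation can be checked as in Proposition~\ref{preind}, using $X''=X\times_\FM X'$.

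For $\FM=[X/\GL_d]$ the key input is that every $\GL_d$-torsor over $\Spec(\FO_L)$ is trivial. Indeed, $\FO_L$ is a valuation ring, hence local, and torsors under $\GL_d$ correspond to rank $d$ vector bundles, which are free over a local ring. Therefore $\FM(\FO_L)=X(\FO_L)/\GL_d(\FO_L)$ as groupoid isomorphism classes. We may assume $X$ is affine over $k\llb t\rrb$, or cover it by finitely many affines, so that $X(\FO_L)$ is a definable subset of $\VF^\bullet$ cut out by polynomial equations with coefficients in $k\llb t\rrb$ and valuation conditions $\val\geq 0$ on coordinates. Likewise $\GL_d(\FO_L)$ is definable as matrices with entries in $\FO_L$ and unit determinant. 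The equivalence relation $x\sim y \iff \exists g\in\GL_d(\FO_L),\ y=x\cdot g$ is then definable, since the action is algebraic. This gives an $\ACVF_{k\llp t\rrp}$ imaginary $\FM(\FO)$ whose $L$-points are $\FM(\FO_L)$ functorially in $L$.

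Next we treat $M^\natural$ and $\pi$. We take $M^\natural(L)=M(\FO_L)\cap U(L)$, which is definable after covering $M$ by affines. The map $\pi$ is induced by the $\GL_d$-invariant morphism $X\to M$, so it descends to the quotient and its graph is definable. We set $\FM^\natural=\pi^{-1}(M^\natural)$, which is definable. This recovers $\FM(\FO_L)^\natural$ because the condition is imposed on the image in $M$.

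The main obstacle is uniformity across all $L$, including non-discretely valued ones such as $\overline{K\llp t\rrp}$. Triviality of torsors handles the points. The remaining issue is that the imaginary should not depend on the affine cover of $X$, nor on the choice of lift when the reduction lies in an overlap. A lift in $X(\FO_L)$ has a closed point in some affine chart, so the whole arc lies in that chart; we will make the disjoint-union description canonical by stratifying the special fibre by the first chart containing the closed point.
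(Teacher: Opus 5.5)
Your proposal is correct and is essentially the paper's argument: reduce to a global quotient $[X/\GL_d]$, identify $\FM(\FO_L)^\natural$ with the $\GL_d(\FO_L)$-orbits of arcs in $X(\FO_L)$ whose generic point maps into $U$, and define $\pi$ through the invariant map $X\to M$. You also supply justifications the paper leaves implicit. The main one is that $\GL_d$-torsors over the local ring $\FO_L$ are trivial, which gives $\FM(\FO_L)=X(\FO_L)/\GL_d(\FO_L)$. In addition, your definition $\FM^\natural=\pi^{-1}(M^\natural)$ correctly imposes the $\natural$ condition only on the generic point, whereas the paper's shorthand $W(\FO_L)$ with $W=p^{-1}(U)$ should be read as $X(\FO_L)\cap W(L)$.
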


\begin{proof}
The stack $\FM$ is a global quotient $\FM=[X/G]$, with algebraic maps $p\colon X\to M$ and $\pi\colon\FM\to M$. Let $W=p^{-1}(U)$. Then $\FM(\FO_L)^\natural=W(\FO_L)/G(\FO_L)$, so it has a structure of an imaginary set. 
The morphism $\pi_L$ is defined using the algebraic morphism $p$, so $\pi_L$ is also definable. 
\end{proof}

\begin{lemma}
\label{lem:FMsharptwisted}
Let $K$ be an algebraically closed field. The preimage of $M(K\llb t \rrb)^\natural$ under 
\[\pi\colon \FM^\natural(K\llb t^{1/\infty} \rrb)\longrightarrow M(K\llb t^{1/\infty} \rrb)^\natural
\]
is $\FM(D_K^{1/\infty})^\natural$, and the preimage of $M(K\llb t \rrb)^\natural$ under 
\[\pi_N \colon \FM^\natural(K\llb t^{1/N} \rrb)\longrightarrow M(K\llb t^{1/N} \rrb)^\natural
\]
is $\FM(D_K^{1/N})^\natural$.
\end{lemma}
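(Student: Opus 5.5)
We will prove the statement for a fixed $N$; the case $N=\infty$ then follows by taking the union over all $N$, since $K\llb t^{1/\infty}\rrb=\bigcup_N K\llb t^{1/N}\rrb$ and any point of $\FM(K\llb t^{1/\infty}\rrb)$ is defined over some finite level. As in the rest of this subsection we reduce to $\FM=[X/\GL_d]$. Then $\FM^\natural(K\llb t^{1/N}\rrb)=W(K\llb t^{1/N}\rrb)/\GL_d(K\llb t^{1/N}\rrb)$, where $W=p^{-1}(U)$; this uses that $\GL_d$-torsors over $K\llb t^{1/N}\rrb$ are trivial.

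The inclusion $\FM(D_K^{1/N})^\natural\subset\pi_N^{-1}(M(K\llb t\rrb)^\natural)$ is formal. A twisted arc $D^{1/N}\to\FM$ over $k\llb t\rrb$ composed with $\pi$ gives a map $D^{1/N}\to M$. Since $M$ is a scheme, this map factors through the coarse space $\Spec(K\llb t^{1/N}\rrb)/\mu_N=\Spec(K\llb t\rrb)$. Hence the image arc is $\mu_N$-invariant, so it is a $K\llb t\rrb$-point, and it lies in $U(K\llp t\rrp)$ by the definition of $\natural$.

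For the converse, take $y\in\FM(K\llb t^{1/N}\rrb)^\natural$ with $x=\pi_N(y)\in M(K\llb t\rrb)^\natural$. We must show that $y$ descends along $\Spec(K\llb t^{1/N}\rrb)\to D^{1/N}$, that is, that $y$ carries a $\mu_N$-equivariant structure compatible with the Galois action. The plan has three steps.

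First, on the generic fibre: $x_\eta\in U(K\llp t\rrp)$ and $\pi^{-1}(U)\to U$ is an equivalence (Assumption~\ref{basas}(2)). Hence $y_\eta$ is the unique lift of $x_\eta$, and for each $\xi\in\mu_N$ there is a canonical isomorphism $\xi^*y_\eta\cong y_\eta$. By uniqueness these isomorphisms satisfy the cocycle condition, so $y_\eta$ descends to $\Spec(K\llp t\rrp)$.

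Second, we must extend the isomorphisms $\xi^*y\cong y$ over the closed point. Since $\FM$ has separated diagonal, such an extension is unique if it exists; existence is the crux. In the quotient presentation, write $y$ as the class of $w\in W(K\llb t^{1/N}\rrb)$. The generic descent provides $g_\xi\in\GL_d(K\llp t^{1/N}\rrp)$ with $w^\xi=w\,g_\xi$, and we need $g_\xi$ to be integral up to modifying the lift $w$.

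Third, to obtain integrality I would use $\FS$-completeness (Assumption~\ref{basas}(1)) in the form of its standard consequence: two $R$-points of $\FM$ over the same $R$-point of $M$ that agree generically differ by a modification along $\oST_R$. Concretely, apply it to $R=K\llb t^{1/N}\rrb$ with the two points $y$ and $\xi^*y$, which are isomorphic generically and have the same image $x$ in $M$. The two points glue to a map from the complement of the origin in $\oST_R$, and $\FS$-completeness extends it to $\oST_R\to\FM$. Restricting the extension to the fixed locus / the $\BG_m$-weight decomposition should show that $g_\xi$ lies in a parahoric (the stabilizer of a point of the building). Averaging over the finite group $\mu_N$ then lets us replace $w$ by $wh$ with $h\in\GL_d(K\llp t^{1/N}\rrp)$ such that the new cocycle is integral. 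Equivalently, via Proposition~\ref{prop:descent-BT}, the $\mu_N$-action on the relevant bounded convex subset of $\widetilde{\FB}(\GL_d(K\llp t^{1/N}\rrp))$ has a fixed vertex after possibly enlarging $N$; by Proposition~\ref{prop:descent-BT} fixed points exist, but they may not be vertices at level $N$. I expect this second step, ensuring that $\xi^*y\cong y$ extends integrally at level exactly $N$ rather than merely at some higher level, to be the main obstacle.

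Once integral isomorphisms $\xi^*y\cong y$ are obtained, the cocycle condition holds because it holds generically and the diagonal is separated. This produces the desired twisted arc in $\FM(D_K^{1/N})^\natural$. That this twisted arc is independent of choices follows from Proposition~\ref{preind}.
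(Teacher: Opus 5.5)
Your forward inclusion is fine. Your first two steps are also fine: generic descent via Assumption~\ref{basas}(2), and uniqueness of integral extensions from separatedness of the diagonal. Together they correctly reduce the converse to deciding whether $\xi^*y\cong y$ integrally, for every $y\in\FM(K\llb t^{1/N}\rrb)^\natural$ over $M(K\llb t\rrb)^\natural$. The problem is that this is false in general, so the step you flag as the main obstacle cannot be carried out by any method. The paper's proof simply asserts it (``precisely when it is invariant under the action of $\hat\mu$'') and has the same gap.

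Take $\FM=[(\M_2\times\M_2)/\GL_2]$ with $g\cdot(A,B)=(gA,Bg^{-1})$ and $\pi(A,B)=BA\in M=\M_2$. This is a good moduli space and an isomorphism over $U=\{\det\neq 0\}$, with trivial canonical bundle, so Assumption~\ref{basas} holds. With $s=t^{1/N}$ and $N\geq 2$, set
\[
A=\begin{pmatrix} t&0\\ -s&1\end{pmatrix},\qquad B=\begin{pmatrix} 1&0\\ s&t\end{pmatrix},\qquad BA=tI .
\]
Then $y=[(A,B)]$ lies over $tI\in M(K\llb t\rrb)^\natural$. However, the unique $g$ with $gA=A^\xi$ is
\[
A^\xi A^{-1}=\begin{pmatrix} 1&0\\ (1-\xi)s^{1-N}&1\end{pmatrix}\notin \GL_2(K\llb t^{1/\infty}\rrb)\qquad(\xi\neq 1).
\]
So $\xi^*y\not\cong y$ even after any further ramification, and $y$ does not come from a twisted arc. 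In lattice terms, the fibre over $tI$ is the set of lattices between $K\llb s\rrb^2$ and $t^{-1}K\llb s\rrb^2$. The lattice $K\llb s\rrb^2+K\llb s\rrb\,t^{-1}(1,s)$ is one of them, and it is not Galois-stable.

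The example also shows concretely why your third step fails.
\begin{itemize}
\item $\FS$-completeness only yields a map $\oST_R\to\FM$ joining $y$ and $\xi^*y$, i.e.\ it places them in a common apartment of the fibre. This is how the paper gets connectedness in Lemma~\ref{lem:vol:Gmy}; it never identifies the two points.
\item The averaging/fixed-point step replaces $w$ by $wh$ with $h$ non-integral, which moves you to a different point of the fibre. It shows that the fibre contains some $\mu_N$-fixed point (essentially Assumption~\ref{basas}(3)), not that the given $y$ is fixed.
\item Enlarging $N$ cannot help, since $g_\xi$ is uniquely determined by $y$ and its non-integrality does not depend on the level.
\end{itemize}

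What your argument, and the first paragraph of the paper's proof, actually establishes is the corrected statement
\[
\FM(D_K^{1/N})^\natural=\bigl(\pi_N^{-1}(M(K\llb t\rrb)^\natural)\bigr)^{\mu_N},
\]
and likewise with $\hat\mu$. Any downstream use of the lemma would have to be made with this fixed-point version. An example is placing $\pi^{-1}(x)$ inside the Galois-fixed part of the building in Proposition~\ref{prop:vol:1}.
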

\begin{proof}
By definition, a point in $\FM^\natural(D^{1/\infty}_K)$ is a $K\llb t \rrb$-morphism from $\Spec(K\llb t^{1/\infty} \rrb)$ to $\FM$ which is $\hat\mu$ invariant. So $\FM^\natural(D^{1/\infty}_K)$ is the subset of $\FM^\natural(K\llb t^{1/\infty} \rrb)$ invariant by $\hat \mu$.

An arc in $\FM^\natural(K\llb t^{1/\infty} \rrb)$ is mapped by $\pi$ to a point in  $M(K\llb t \rrb)^\natural$ precisely when it is invariant under the action of $\hat \mu$, hence it corresponds to a twisted arc.  

The same holds at fixed ramification, replacing $K\llb t^{1/\infty} \rrb$ by $K\llb t^{1/N} \rrb$ and $\hat \mu$ by $\mu_N$. 
\end{proof}


\begin{lemma}
\label{lem:fibbounded}
The fibers of $\pi\colon \FM^\natural\to M^\natural$ are definably compact. 
\end{lemma}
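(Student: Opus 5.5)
We reduce to the local model $\FM=[X/\GL_d]$ as announced at the start of this subsection. For $L$ an algebraically closed valued field and $x\in M(\FO_L)^\natural$, the fiber $\pi_L^{-1}(x)$ is a subset of $W(\FO_L)/\GL_d(\FO_L)$. Fix a point $x_\eta\in U(L)$; since $\pi^{-1}(U)\to U$ is an equivalence, the generic fiber of $p^{-1}(x_\eta)$ is a single $\GL_d(L)$-orbit, i.e.\ a trivial torsor, once we choose a point $w_0\in W(L)$ over $x$. Every $L$-point over $x$ then has the form $w_0\cdot g$ with $g\in\GL_d(L)$ unique. An $\FO_L$-point modulo $\GL_d(\FO_L)$ corresponds to a class $g\GL_d(\FO_L)$ with $w_0 g\in X(\FO_L)$. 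This identifies the fiber definably with a subset
\[
\Sigma_x=\set{g\GL_d(\FO_L)\mid w_0g\in X(\FO_L)}\subset \GL_d(L)/\GL_d(\FO_L),
\]
which by Proposition \ref{prop:descent-BT-Finf} sits inside the building $\widetilde{\FB}^\Gamma(\GL_d)$. Definable compactness then amounts to showing that $\Sigma_x$ is bounded, and closed in the appropriate definable sense. Closedness should be formal: $\Sigma_x$ is a union of vertices, i.e.\ of cosets of an open compact subgroup, defined by integrality conditions.

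The key step is boundedness, and for this we use $\FS$-completeness (Assumption \ref{basas}(1)). Suppose $\Sigma_x$ were unbounded. By definable choice in $\ACVF$ (o-minimality of $\Gamma$), there would be a definable curve, or after passing to an elementary extension a point, $g$ of unbounded distance from $\id$. Using the Cartan decomposition we write $g=k_1\lambda(\varpi)k_2$ with $\lambda$ a cochain of valuations. Unboundedness then yields a nonzero cocharacter $\lambda\colon\BG_m\to\GL_d$ such that both $w_0$ and $w_0\lambda(\varpi)$ (up to $\GL_d(\FO)$) are integral. Over the DVR $R$ obtained from the relevant rank-one valued subfield, the pair of integral points $w_0$ and $w_0\lambda(t)$ with the same image $x$ in $M$ glues to a morphism $\oST_R\setminus 0\to\FM$ lying over $\oST_R\to M$. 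Concretely, the $S\ne0$ and $T\ne0$ charts give the two arcs, and the $\BG_m$-action via $\lambda$ interpolates between them. $\FS$-completeness then supplies a unique extension $\oST_R\to\FM$. The uniqueness part forces the two arcs to be related by an element of $\GL_d(R)$, contradicting the fact that the distance $d(g,\id)$ is large. Alternatively, the existence part shows that the two vertices are joined by a segment in the building inside $\Sigma_x$, which gives control of $\Sigma_x$ via the unique-fixed-point behaviour of the $\BG_m$-limits. Quantitatively, running this argument along a ray shows that the set of $\lambda$ directions along which $\Sigma_x$ extends is bounded.

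Finally we check uniformity in $x$. The relevant data ($w_0$ and the bound) vary definably, so compactness of the theory upgrades pointwise boundedness to definable compactness of every fiber. I expect the main obstacle to be the middle step. There we must construct $\oST_R\setminus0\to\FM$ from an unbounded direction correctly, which is the standard dictionary of \cite{AHH}, where maps from $\oST_R$ correspond to pairs of $R$-points that agree generically, twisted by a cocharacter. We must also deduce from uniqueness of the extension that the two points coincide in the building; here separatedness of the diagonal and affine stabilizers will be used.
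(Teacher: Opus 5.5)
There is a genuine gap in your boundedness step, which rests on a misreading of $\FS$-completeness. Uniqueness in the definition concerns the extension $\oST_R\to\FM$ of a given map $\oST_R\setminus 0\to\FM$. It says nothing about the two $R$-points on the charts $\{S\neq0\}$ and $\{T\neq0\}$ being isomorphic.

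If your deduction held, then every $g=k_1\lambda(\varpi)k_2\in\Sigma_x$ would give $\lambda(\varpi)\in\GL_d(\FO_L)$, so every fibre would be a single point. That is false under Assumption~\ref{basas}. Take the good moduli space $[\BA^2/\BG_m]\to\BA^1$, $(u,v)\mapsto uv$, with weights $(1,-1)$. The integral points $(1,a)$ and $(c,c^{-1}a)$ with $0<\val(c)\le\val(a)$ lie over the same $a$ and are not $\FO_L^\times$-related; the fibre is the whole segment $0\le\val(\lambda)\le\val(a)$.

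Your alternative, that existence of the extension gives segments in $\Sigma_x$, only yields convexity and connectedness. That is exactly how the paper uses $\FS$-completeness, in Lemma~\ref{lem:vol:Gmy}, and a ray is convex. In fact $\FS$-completeness cannot give boundedness on its own. The map $[\BA^1/\BG_m]\to\Spec k$ is a good moduli space, hence $\FS$-complete, yet for $a\neq0$ the set $\{g\in\BG_m(L)\mid ga\in\FO_L\}$ is the unbounded ray $\val(g)\ge-\val(a)$. What fails there is Assumption~\ref{basas}(2).

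The missing ingredient is Assumption~\ref{basas}(2) combined with boundedness of $X(\FO_L)$. Since $p^{-1}(U)\to U$ is a $\GL_d$-torsor, $g\mapsto w_0g$ is an isomorphism of $\GL_{d,L}$ onto the closed fibre $p^{-1}(x_\eta)\subset X_L$. For $X$ affine, the entries of $g$ and $\det(g)^{-1}$ are then polynomials in the ambient coordinates of $w_0g$. They are therefore bounded whenever $w_0g\in X(\FO_L)$, which bounds $\Sigma_x$ in the building.

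The paper's proof is shorter and never passes to the building. The fibre of $p\colon X^\natural\to M^\natural$ is bounded because it lies in $X(\FO_L)$, and closed because $p$ is continuous. It is therefore definably compact by the Hrushovski--Loeser criterion (bounded and closed). The fibre of $\pi$ is its image under the quotient by the free $G(\FO_L)$-action, so it is definably compact too.

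Working in $X(\FO_L)$ also avoids a problem in your closedness claim. You conflate $\Sigma_x$, a set of points of the building, with its preimage in $\GL_d(L)$, a union of cosets of $\GL_d(\FO_L)$.
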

\begin{proof}
We first show the result for the fibers of $p\colon X^\natural \to M^\natural$. By the characterisation of \cite[Lemma 4.2.4]{HL16}, we need to show that they are bounded and closed for the valuation topology. They are bounded, since included in the bounded set $X(K\llb t^{1/\infty} \rrb)$. They are also closed since $p$ is continuous. The fibers of $\pi$ are quotient of the fibers of $p$ by the free action of $G(K\llb t^{1/\infty} \rrb)$, hence definably compact as well. 
\end{proof}

\begin{proposition}
\label{prop:vol-lim}
The maps $x\in M(K\llb t \rrb)^\natural\mapsto \Vol(\pi^{-1}(x))$ and $(x,N)\in M(\llb t \rrb)^\natural \times \BZ_{>0}\mapsto \mu^\#(\pi_N^{-1}(x))$  determine constructible motivic functions in $\mcC(M^\natural)$ and $\mcC(M^\natural \times \BZ_{>0})$. Moreover, for every $x\in M(K\llb t \rrb)^\natural$, 
\[
\Vol(\pi^{-1}(x))=-\lim_{T\to +\infty} \sum_{N\geq 1} \mu^\#(\pi_N^{-1}(x)) T^N.
\]
\end{proposition}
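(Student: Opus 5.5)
The plan is to deduce Proposition~\ref{prop:vol-lim} from the general machinery of the previous section, applied to the definable map $\pi\colon \FM^\natural\to M^\natural$ of Proposition~\ref{prop:FMsharpdef}. Since the statement is local on $M$, we may assume $\FM=[X/\GL_d]$. We then work with the $\ACVF_{k\llp t\rrp}$-definable map
\[p\colon X^\natural=W(\FO)\to M^\natural, \qquad W=p^{-1}(U),\]
together with its quotient by $\GL_d(\FO)$. To do this we need a definable function $\omega$ into $\Gamma$ inducing the counting measure on the fibres of $\pi$. For $\omega$ we would take the valuation of the Jacobian-type function $\val(f)$ for a $\GL_d(F)$-invariant volume form $f\wedge_i dx_i$ on $\GL_d$, as in the example following the definition of $\mu^\#$. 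This gives the fibres of $\pi$ (quotients of fibres of $p$ by free $\GL_d(\FO)$-orbits) the counting measure $\mu^\#$.

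\textbf{Constructibility.} The first statement follows from Proposition~\ref{prop:defvolx}, which gives $x\mapsto \Vol(\pi^{-1}(x))\in\mcC(M^\natural)$. It also uses Proposition~\ref{prop:defphinx}, whose hypotheses are that $\omega$ induces the counting measure and that the fibres are bounded; boundedness is part of Lemma~\ref{lem:fibbounded}. Proposition~\ref{prop:defphinx} produces $\varphi\in\mcC(M^\natural\times\BZ_{>0})$ with $\varphi(\cdot,N)=\varphi_N$. By Lemma~\ref{lem:FMsharptwisted}, restricting the fibre of $\pi_N$ over $M(K\llp t^{1/N}\rrp)$-points to $x\in M(K\llb t\rrb)^\natural$ gives exactly the set of twisted arcs $\pi_N^{-1}(x)\subset\FM(D_K^{1/N})^\natural$. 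Hence $\varphi_N(x)=\mu^\#(\pi_N^{-1}(x))$. One should check that the quantifier-free descriptions of $\pi$ and $\omega$ required before Proposition~\ref{prop:defphinx} are available; quantifier elimination in $\ACVF_K$ provides this. One should also check that $\omega$ takes values in $\frac1N\BZ$ on $N$-ramified points, which holds because $\omega$ is of the form $\val\circ f$.

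\textbf{Limit formula.} We then apply Proposition~\ref{prop:vollimeq} fibrewise. Its remaining hypothesis is that the fibres of $\pi$ are proper invariant, i.e.\ bounded and a union of balls of some fixed radius. Boundedness is Lemma~\ref{lem:fibbounded}. For the ball condition, we would argue on the $\GL_d$-torsor level: for fixed $x$, the fibre $p^{-1}(x)\cap W(\FO)$ is a closed bounded set stable under right multiplication by $\GL_d(\FO)$, and on the quotient we use the identification of Proposition~\ref{prop:descent-BT-Finf} of orbits with rational points of the building. There is a subtlety here: proper invariance should be read in the imaginary sense, where points of the fibre are lattices and the fibre is a definably compact subset of the building. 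We would either transfer \cite[Theorem 8.11]{FY} through the lift to $X$ (a union of $\GL_d(\FO)$-cosets, hence of balls of radius $0$ in matrix coordinates, after translating into a bounded region), or pass to a $\GL_d(\FO)$-stable open-closed neighbourhood. With that, Proposition~\ref{prop:vollimeq} gives rationality of degree zero and
\[\Vol(\pi^{-1}(x))=-\lim_{T\to+\infty}\sum_{N\ge1}\mu^\#(\pi_N^{-1}(x))T^N.\]

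\textbf{Main obstacle.} The hardest step is verifying the hypotheses of Proposition~\ref{prop:vollimeq} for imaginary (quotient) fibres. Specifically, one must make precise that working on $X$ with the invariant form, divided by the volume of $\GL_d(\FO)$, gives the same counts $\mu^\#$ at every finite level $N$. This includes the twisted groups $\GL_d(K\llb t^{1/N}\rrb)^\phi$, and is where the computation of Lemma~\ref{lem:measfubini} and the factor $\BL^{-w(\mathfrak{gl}_d)}[\Cent(\phi)]$ would be reused.
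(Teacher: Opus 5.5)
Your proposal is correct and is essentially the paper's own argument. The paper likewise combines Proposition~\ref{prop:FMsharpdef} and Lemma~\ref{lem:FMsharptwisted} with Propositions~\ref{prop:defvolx}, \ref{prop:defphinx} and~\ref{prop:vollimeq}, citing Lemma~\ref{lem:measfubini} for the counting-measure hypothesis and Lemma~\ref{lem:fibbounded} together with $G(\FO)$-invariance for proper invariance. One small slip in your parenthetical: a coset $g\GL_d(\FO)$ is not a union of balls of radius $0$ in matrix coordinates but of balls whose radius depends on $g$, and the boundedness of the fibre in the building lets you choose this radius uniformly, so the proper-invariance check still goes through.
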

\begin{proof}
In view of Proposition~\ref{prop:FMsharpdef} and Lemma~\ref{lem:FMsharptwisted}, the result follows from Propositions~\ref{prop:defvolx}, \ref{prop:defphinx} and \ref{prop:vollimeq}. The measure on the fibers is the counting measure by Lemma~\ref{lem:measfubini}, and the fibers are bounded by Lemma~\ref{lem:fibbounded} hence proper invariant since by definition, they are invariant by $G(\FO)$.
\end{proof}

\begin{rmk}
The map $x\mapsto \mu^\#(\pi_1^{-1}(x))$ is related to the function $\mathrm{sep}_{\pi_1}$ constructed by Satriano and Usatine in \cite{SU}. More precisely, when $\pi_1^{-1}(x)$ is finite, $\mu^\#(\pi_1^{-1}(x))=\mathrm{sep}_{\pi_1}(x)$. See Example~\ref{ex:toroidal} for a situation where this holds. So Proposition~\ref{prop:vol-lim} shows that the function $\mathrm{sep}_{\pi_1}$ has measurable fibers.
When the fibers are infinite, $\mu^\#(\pi_1^{-1}(x))$ is a natural generalization of $\mathrm{sep}_{\pi_1}$.
\end{rmk}

\begin{lemma}
\label{lem:fibsection}
The map $\pi\colon \FM^\natural\to M^\natural$ admits a definable section. 
\end{lemma}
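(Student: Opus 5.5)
The plan has two parts: first show that every fiber of $\pi\colon \FM^\natural\to M^\natural$ is nonempty, then invoke elimination of imaginaries to choose a point uniformly. We work locally on $M$, so we may assume $\FM=[X/\GL_d]$ with $p\colon X\to M$. As in the proof of Proposition~\ref{prop:FMsharpdef}, we then have $\FM^\natural(L)=W(\FO_L)/\GL_d(\FO_L)$ with $W=p^{-1}(U)$, so the problem is to choose definably, for each $x\in M^\natural(L)$, a $\GL_d(\FO_L)$-orbit in $p^{-1}(x)\cap W(\FO_L)$.

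\emph{Nonemptiness.} Let $L$ be an algebraically closed valued field, and first take the case $L=K\llp t^{1/\infty}\rrp$ with $x\in M(K\llb t\rrb)^\natural$. Assumption~\ref{basas}(\ref{propex}) gives a twisted arc $\tilde x\in\FM(D^{1/N})$ lying over $x$. Composing $\tilde x$ with $\Spec K\llb t^{1/N}\rrb\to D^{1/N}$ produces a point of $\FM(K\llb t^{1/N}\rrb)^\natural\subset \FM^\natural(L)$ over $x$. For a general $x\in M^\natural(L)$ we argue by transfer: the statement ``$\forall x\in M^\natural\ \exists y\in \FM^\natural,\ \pi(y)=x$'' is first order in $\ACVF_K$, so it suffices to verify it in one model. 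Alternatively, we can rerun the argument with a general valued field and use $\FS$-completeness, Assumption~\ref{basas}(1), together with (\ref{gtors}): a point of $U(L)$ lifts uniquely to $\FM(L)$, and the question is whether this lift extends over $\FO_L$ after a finite extension. Since $L$ is algebraically closed, the ramified extension is already trivial. This is where Assumption~(\ref{propex}) (a valuative-criterion type statement) is used. I expect this transfer from discretely valued $K\llb t\rrb$ to arbitrary models of $\ACVF$ to be the main obstacle. The first thing I would try is to note that both $M^\natural$ and $\FM^\natural$ are defined by quantifier-free data over $k\llp t\rrp$, and to reduce to points of finite transcendence type, where $x$ is defined over the algebraic closure of a field of the form $K\llp t\rrp$ (using that the $\ACVF$ models $\overline{K\llp t\rrp}$ are elementary substructures).

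\emph{Choosing a section.} Once surjectivity holds, we pick a point in each fiber. The fiber $\pi^{-1}(x)$ is a definable set of $\GL_d(\FO)$-orbits, i.e. imaginaries. By Haskell--Hrushovski--Macpherson, $\ACVF$ eliminates imaginaries down to the geometric sorts, and the quotient $\GL_d(L)/\GL_d(\FO_L)$ is itself a geometric sort (lattices). Definable choice does not hold in all sorts, so we use the structure established in Section~\ref{hkc} instead. Via Proposition~\ref{prop:descent-BT-Finf}, the fiber $\pi^{-1}(x)$ embeds into the building $\widetilde{\FB}^\Gamma(\GL_d)$ relative to a base point determined by $x$. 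By Lemma~\ref{lem:fibbounded} it is definably compact, so a canonical point can be chosen. For instance, take the barycenter (circumcenter) of the fiber in the CAT(0) metric of the building: it is unique, and definable because the metric is given by $\Gamma$-valued formulas uniformly in $x$. It also lies in the fiber, by the convexity of the fibers, which is the same property used to show in Proposition~\ref{prop:Volbuilding} that the volume equals $1$.

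Finally, we glue over a Nisnevich stratification of $\FM$, using the sections $\sigma_i$ as in Proposition~\ref{preind}, to obtain a global definable section.
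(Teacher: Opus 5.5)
\textbf{The choice step has a genuine gap.} The CAT(0) distance on $\widetilde{\FB}(\GL_d)$ is the Euclidean one on apartments, $d(\alpha,\beta)=\bigl(\sum_i(a_i-b_i)^2\bigr)^{1/2}$. It is therefore neither $\Gamma$-valued nor definable. In $\ACVF$ the sort $\Gamma$ is stably embedded with the structure of a pure divisible ordered abelian group, and definable maps between powers of $\Gamma$ are piecewise affine. The circumcenter, by contrast, is cut out by equations quadratic in the data: for the triangle with vertices $(0,0),(a,0),(p,q)$ its second coordinate is $(p^2+q^2-ap)/2q$. So when the shape of $\pi^{-1}(x)$ varies with $x$, there is no reason for $x\mapsto c(\pi^{-1}(x))$ to be definable. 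Uniqueness of $c$ only makes it invariant under automorphisms fixing $x$. In a non-saturated model such as $K\llp t^{1/\infty}\rrp$ that does not give a uniform formula, and in models with non-archimedean $\Gamma$ there is no real-valued metric at all.

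Two further claims in this step are unsupported. First, ``a base point determined by $x$'' needs a point of the $\GL_d(L)$-torsor $p^{-1}(x)$ that is definable from $x$. You can get one from Zariski-local sections of $W\to U$, since $\GL_d$ is special, but you must say so. Second, convexity of the fibers is not what Proposition~\ref{prop:Volbuilding} uses; its proof uses connectedness. Convexity would have to be extracted from $\FS$-completeness, as in Lemma~\ref{lem:vol:Gmy}.

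The missing idea is to reduce the choice to the sort where definable choice does hold, namely the o-minimal $\Gamma$. That is what the paper does. It takes $\tilde x$ in the fiber and considers its orbit under the standard maximal torus $T(L)$. This orbit is in $\tilde x$-definable bijection with a non-empty definable $\Delta_{\tilde x}\subset\Gamma^d=T(L)/T(\FO_L)$, which has a definable point by o-minimality. Changing $\tilde x$ only translates $\Delta_{\tilde x}$, so the resulting point of the fiber does not depend on $\tilde x$.

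Your non-emptiness part is not needed: the paper only produces a point in each non-empty fiber, which is all that Lemma~\ref{lem:fibbijSn} uses. As written it is also incomplete. To verify your first-order sentence in the model $K\llp t^{1/\infty}\rrp$ you must treat all of its points, not only those defined over $K\llp t\rrp$. That requires Assumption~\ref{basas}(\ref{propex}) over $K\llb t^{1/N}\rrb$.
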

\begin{proof}
Let $x\in M(K\llb t^{1/\infty} \rrb)$. We need to find a definable point in the fiber $\pi^{-1}(x)$ whenever it is non-empty.

Recall that $\FM(K\llb t^{1/\infty}\rrb)^\natural=(X^\natural(K\llb t^{1/\infty}\rrb)/\GL_n(K\llb t^{1/\infty}\rrb))$.

Let $\tilde x\in \pi^{-1}(x)$. 
Let $T$ be the standard maximal torus of $\GL_d$. Consider the orbit $O_{\tilde x}$ of $\tilde x$ under $T(K\llp t^{1/\infty} \rrp)$:
\[
O_{\tilde x}=\set{t\tilde x\in \FM(K\llb t^{1/\infty}\rrb)^\natural  \mid t\in T(K\llp t^{1/\infty} \rrp)}.
\]
The orbit $O_{\tilde x}$ is in $\tilde x$-definable bijection with a subset $\Delta_{\tilde x}$ of $\Gamma^n$ such that 
\[
\Delta_{\tilde x}=\set{\bar t\in \Gamma^d=T(K\llp t^{1/\infty} \rrp)/T(K\llb t^{1/\infty} \rrb)\mid t\tilde x\in O_{\tilde x}}. 
\]
Since $\Delta_{\tilde x}$ is a non-empty definable set in an o-minimal structure, it admits a definable point $\gamma_{\tilde x}$. Changing the choice of $\tilde x$ by amounts to a translation of $\Delta_{\tilde x}$, so up to this translation $\Delta_{\tilde x}$  is a definable set $\Delta$, admitting a definable point $\gamma$ independent of $\tilde x$. The preimage of $\gamma$ in $O_{\tilde x}$ is a definable point in the fiber independent of $\tilde x$. 
\end{proof}

\begin{lemma}
\label{lem:fibbijSn}
There is some integer $d$, a definable subset $Z\subset M^\natural\times S_d$, and a definable bijection $h\colon \FM^\natural \to Z$ commuting with the maps to $M^\natural$.  
\end{lemma}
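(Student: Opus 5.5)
The set $S_d$ here is read as the geometric sort of $\ACVF$ parametrizing $\FO$-lattices in $\VF^d$, that is $S_d=\GL_d(\VF)/\GL_d(\FO)$. The plan is then to eliminate the imaginaries of $\FM^\natural$ fibrewise over $M^\natural$ by trivializing the $\GL_d$-torsor $X\to\FM$ over the generic fibre. As in the rest of this subsection, we work locally on $M$ and assume $\FM=[X/\GL_d]$. Write $p\colon X\to M$ and $W=p^{-1}(U)$, so that by Proposition~\ref{prop:FMsharpdef}
\[
\FM^\natural(L)=W(\FO_L)/\GL_d(\FO_L)
\]
for every algebraically closed valued field $L$.

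\emph{Step 1: a definable generic base point.} By Assumption~\ref{basas}(\ref{gtors}), $W\to U$ is a $\GL_d$-torsor. Hence for $x\in M^\natural(L)$ the fibre $W_x=p^{-1}(x)$ is a $\GL_d$-torsor over the field generated by the coordinates of $x$. By Hilbert 90 for $\GL_d$, this torsor has a rational point over any field containing those coordinates. We want a definable function $s\colon M^\natural\to W(\VF)$ with $p(s(x))=x$; note that $s(x)$ need not be integral.
\begin{itemize}
\item First attempt: apply Hilbert 90 over the field $k\llp t\rrp(x)$, which lies in $\mathrm{dcl}(x)$. This gives a point of $W_x$ rational over $\mathrm{dcl}(x)$, hence $x$-definable. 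Compactness, together with a finite definable partition of $M^\natural$, then makes the choice uniform.
\item Fallback: use Lemma~\ref{lem:fibsection} to pick a definable point $\tilde y(x)\in\pi^{-1}(x)$, and trivialize the torsor over it in the same way.
\end{itemize}

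\emph{Step 2: the map $h$.} Given $y\in\pi^{-1}(x)$, choose a representative $w\in W(\FO_L)$. Since $W_x$ is a $\GL_d(L)$-torsor, there is a unique $g\in\GL_d(L)$ with $w=s(x)\cdot g$. Replacing $w$ by $w\gamma$ with $\gamma\in\GL_d(\FO_L)$ replaces $g$ by $g\gamma$, so the coset $g\GL_d(\FO_L)\in S_d$ depends only on $y$. We set
\[
h(y)=\bigl(x,\ g\GL_d(\FO_L)\bigr)
\]
and take $Z$ to be its image.

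\emph{Step 3: properties of $h$.}
\begin{itemize}
\item Definability: $g$ is determined from $w$ and $s(x)$ by the algebraic action map, so $h$ is definable from $s$.
\item Compatibility with the maps to $M^\natural$: this holds by construction.
\item Injectivity: if $h(y)=h(y')$, then $g'=g\gamma$ with $\gamma\in\GL_d(\FO_L)$, so $w'=w\gamma$ and $y=y'$. This uses freeness of the action on the generic fibre.
\item $Z$ is definable as the image of a definable map.
\item Gluing: for a general cd-quotient stack, glue the pieces given by the stratification~\eqref{covem}.
\end{itemize}

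\emph{Main obstacle.} The main difficulty is Step 1, the uniform definable choice of $s(x)$. Imaginaries in $\ACVF$ do not automatically admit definable representatives, so we need the base point to live in the home sort $\VF$. The point is that Hilbert 90 applies over $\mathrm{dcl}(x)$, which is a field. I will try this argument first, falling back on Lemma~\ref{lem:fibsection} combined with the $T$-orbit argument from its proof if needed.
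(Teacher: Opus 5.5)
Your proof is correct, under the same reduction to $\FM=[X/\GL_d]$ that the paper makes for this whole subsection. It differs from the paper in how the base point is produced.

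\emph{The paper's route.} It takes the definable section of $\pi$ from Lemma~\ref{lem:fibsection}: a point $\tilde x$ of the imaginary fibre $\pi^{-1}(x)$, obtained from a $T$-orbit and a definable point of an o-minimal subset of $\Gamma^d$. It then sends $g\tilde x$ to $g\GL_d(\FO)$.

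\emph{Your route.} You trivialize the generic torsor $W\to U$ at $x$ by Hilbert 90 over $k\llp t\rrp(x)\subset\mathrm{dcl}(x)$. This can even be done without compactness: $\GL_d$-torsors are Zariski-locally trivial, so finitely many sections $s_i\colon U_i\to W$ over a Zariski cover of $U$ give a quantifier-free definable $s$.

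\emph{What your route buys.} First, it avoids Lemma~\ref{lem:fibsection} and any input from the building. Second, it makes well-definedness transparent: the element $g$ with $w=s(x)g$ is uniquely determined because $s(x)$ is a home-sort point. A point of the imaginary sort $W(\FO_L)/\GL_d(\FO_L)$ determines $g$ only up to $g\mapsto\gamma^{-1}g$ with $\gamma\in\GL_d(\FO_L)$. So the paper's recipe tacitly needs a representative of $\tilde x$ in $W(\FO_L)$, which is exactly what your $s(x)$ supplies, and it need not be integral. For the same reason your fallback is unnecessary, and on its own it would not yield a home-sort point.

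\emph{What the paper's route buys.} Only a normalization: its base point lies in the fibre, so $Z_x$ contains the vertex $\id$. Your $Z_x$ is a $\GL_d(L)$-translate of the paper's. This is harmless for Proposition~\ref{prop:Volbuilding}, whose proof begins by translating anyway.

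\emph{Caveat on your last bullet.} Gluing along \eqref{covem} is less innocent than it looks. After passing to a Nisnevich cover $[X/\GL_d]\to\FM$, the map $X\to M$ is a $\GL_d$-torsor only over an étale cover of $U$, not over $U$ itself. The Hilbert 90 base point is then only in $\mathrm{acl}(x)$, not $\mathrm{dcl}(x)$. The paper sidesteps this exactly as you do, by assuming $\FM$ is a global quotient.
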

\begin{proof}
By the previous Lemma~\ref{lem:fibsection}, the map $\pi$ admits a definable section. So we can construct $h$ using this definable point $\tilde x$ in $\pi^{-1}(x)$. We then map 
\[g\tilde x\in \pi^{-1}(x)=\set{g\tilde x\in \FM(K\llb t^{1/\infty}\rrb)^\natural  \mid t\in \GL_d(K\llp t^{1/\infty} \rrp)}
\]
to the image of $g$ in $S_d=\GL_d(K\llp t^{1/\infty} \rrp)/\GL_d(K\llb t^{1/\infty} \rrb)$. 
\end{proof}

\begin{example}[Toroidal case]
\label{ex:toroidal}
Assume that $\FM=[X/T]$, with $T$ a dimension $d$ split torus. Then the definable $Z$ of Lemma \ref{lem:fibbijSn} is a subset of $M^\natural\times \Gamma^d$. For each $x\in M^\natural(K\llb t\rrb)$, $Z_x$ is a compact polyhedron of $\Gamma^d$, and, by definition, $\Vol(\pi^{-1}(x))=\eu(Z_x)$, the Euler characteristic of $Z_x$. Moreover, $\mu^\#(\pi_N^{-1}(x))$ is 
equal to $\#Z_x(\frac{1}{N}\BZ)$,
the number  of $\frac{1}{N}\BZ$-points of $Z_x$. So Proposition~\ref{prop:vol-lim} states that those numbers vary as constructible motivic functions, and that  
\[
\eu(Z_x)=-\lim_{T\to+\infty} \sum_{N\geq 1}\#Z_x\Bigl(\frac{1}{N}\BZ\Bigr)T^N.
\]
We will show in Proposition~\ref{prop:vol:1} that $\eu(Z_x)=1$, under our running Assumption~\ref{basas} on $\FM$. 
\end{example}

\subsection{Hrushovski-Kazhdan characteristic of a fiber}\label{hkfib}



\begin{proposition}
\label{prop:vol:1}
Let $x\in M(K\llb t \rrb)^\natural$ and $\pi^{-1}(x)$ be the preimage of $x$ in $\FM^\natural(K\llb t^{1/\infty} \rrb)$. Then $\Vol(\pi^{-1}(x))=1$. 
\end{proposition}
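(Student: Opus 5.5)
The plan is to reduce the statement to Proposition~\ref{prop:Volbuilding}. By Lemma~\ref{lem:fibbijSn}, with $\FM=[X/\GL_d]$ locally on $M$, the fiber $\pi^{-1}(x)$ is in definable bijection with a definable subset $Z_x\subset S_d=\GL_d(K\llp t^{1/\infty}\rrp)/\GL_d(K\llb t^{1/\infty}\rrb)$. Since $x\in M(K\llb t\rrb)^\natural$ is $\Gal$-invariant and the section of Lemma~\ref{lem:fibsection} can be chosen over $K\llp t\rrp$ (by Assumption~\ref{basas}(3), $x$ lifts to a twisted arc, i.e.\ a $\hat\mu$-invariant point of the fiber), $Z_x$ lies in $(\GL_d(F_\infty)/\GL_d(R_\infty))^{\Gal(F_\infty/F)}$ with $F=K\llp t\rrp$. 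By Proposition~\ref{prop:descent-BT-Finf}, we may therefore view $Z_x$ inside $\widetilde{\FB}^\BQ(\GL_d(F))$. Since $\Vol$ is invariant under definable bijection, it then suffices to show that $Z_x$ is definably compact and definably connected.

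Definable compactness is Lemma~\ref{lem:fibbounded}, transported through the bijection $h$. The map $g\tilde x\mapsto g\GL_d(\FO)$ is continuous, and the fiber is closed and bounded, so the image is closed and bounded in the building. I would check this explicitly: boundedness follows because $X(\FO)$ is bounded and $g\tilde x$ integral forces $g$ into a bounded region.

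The main obstacle is definable connectedness, and this is where $\FS$-completeness (Assumption~\ref{basas}(1)) enters. Given two points $g_1\tilde x, g_2\tilde x$ of the fiber, first translate so that $g_1=1$. Choose an apartment containing both vertices, corresponding to a split torus $T\subset\GL_d$, so that $g_2$ can be taken in $T(F_\infty)$. Then reduce to a one-parameter situation by choosing a cocharacter $\lambda$ and $a\in\BQ$ such that $g_2=\lambda(t^a)$. The segment between the two points in the apartment is $\{\lambda(t^s)\mid 0\le s\le a\}$. We must show that each $\lambda(t^s)\tilde x$ is integral. After a ramified base change $t=u^n$, the data $\tilde x$ and $\lambda(u^{na})\tilde x$, both integral, glue to a morphism $\oST_R\setminus 0\to\FM$ over $M$; indeed, the standard description of $\oST_R$ via $\BG_m$-equivariant maps of $\Spec R[S,T]/(ST-u^{na})$ identifies such pairs with the points $\lambda(u^{j})\tilde x$. $\FS$-completeness extends this to all of $\oST_R$, which means precisely that $\lambda(u^j)\tilde x$ is integral for $0\le j\le na$. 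Density of such rational $s$, together with closedness, then yields the whole segment. Hence $Z_x$ is convex along apartment geodesics, and therefore path connected. Carrying out this path-connectedness argument definably, via the o-minimal structure on $\Gamma$, gives definable connectedness.

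Finally, Proposition~\ref{prop:Volbuilding} gives $\Vol(Z_x)=1$, and hence $\Vol(\pi^{-1}(x))=1$. The delicate points I expect to spend time on are identifying $\oST$-points with these torus segments (the correct weights and ramification), and making sure the translation and apartment choices are definable.
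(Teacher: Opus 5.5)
Your route is the paper's. You reduce to $\FM=[X/\GL_d]$ and transport $\pi^{-1}(x)$ through the bijection $h$ of Lemma~\ref{lem:fibbijSn}. You get definable compactness from Lemma~\ref{lem:fibbounded} and definable connectedness from $\FS$-completeness. Your geodesic-convexity argument is a sharper form of Lemma~\ref{lem:vol:Gmy}; note that $\oST_R$ must be formed with a uniformizer, so replace $\lambda$ by a multiple so that $g_2=\lambda(u)$. You then conclude with Proposition~\ref{prop:Volbuilding}.

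The gap is the step placing $Z_x$ inside $(\GL_d(F_\infty)/\GL_d(R_\infty))^{\Gal(F_\infty/F)}$. Proposition~\ref{prop:Volbuilding} genuinely needs this, since its proof runs in the building over $F$, with neighbourhoods of vertices described by flag varieties over $k$. Choosing the base point $\tilde x$ to be $\hat\mu$-invariant only makes $h$ Galois-equivariant. That shows $Z_x$ is a Galois-stable set, not that its points are Galois-fixed. For that you need every point of $\pi^{-1}(x)$ to be $\hat\mu$-invariant. This is the content of Lemma~\ref{lem:FMsharptwisted}, and the paper's proof uses it tacitly when it applies Proposition~\ref{prop:Volbuilding} to the image in $S_d$.

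For non-abelian $\GL_d$ this assertion fails. Let $\FM=[(\mathrm{Hom}(k^3,k^2)\times\mathrm{Hom}(k^2,k^3))/\GL_2]$ with $g\cdot(A,B)=(gA,Bg^{-1})$, and let $\pi(A,B)=BA$, mapping onto the locus $M$ of rank $\leq 2$ matrices in $\mathrm{Mat}_{3\times 3}$. This is a good moduli space and an equivalence over the rank-$2$ locus, and $\omega_\FM$ is trivial, so Assumption~\ref{basas} holds.

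Take $x=t\cdot\mathrm{diag}(1,1,0)$. Sending $(A,B)$ to $B(R_\infty^2)$ identifies $\pi^{-1}(x)$ with the set of lattices $\Lambda$ satisfying $tL_0\subseteq\Lambda\subseteq L_0=R_\infty e_1\oplus R_\infty e_2$. The lattice $\Lambda=tL_0+R_\infty(e_1+t^{1/2}e_2)$ lies in this set. It is not fixed by $t^{1/2}\mapsto -t^{1/2}$, because it does not contain $e_1-t^{1/2}e_2$.

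So $Z_x$ is not contained in the Galois-fixed part, already at level $N=2$, and Proposition~\ref{prop:Volbuilding} as stated and proved does not apply. For a torus there is no problem, since Galois acts trivially on $T(F_\infty)/T(R_\infty)\cong\Gamma^d$. To close the argument in general you need one of two things. Either prove a version of Proposition~\ref{prop:Volbuilding} for definably compact, definably connected subsets of all of $\GL_d(F_\infty)/\GL_d(R_\infty)$, where the simplicial description over $k$ is no longer available. Or give a separate reason why only the Galois-fixed points (the twisted arcs) contribute to $\Vol(\pi^{-1}(x))$.
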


First we need the following

\begin{lemma}
\label{lem:vol:Gmy}
For every $x\in M(K\llb t \rrb)^\natural$, the fiber $\pi^{-1}(x)$ is definably connected.
\end{lemma}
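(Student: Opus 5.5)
We must show that the fiber $\pi^{-1}(x)\subset \FM^\natural(K\llb t^{1/\infty}\rrb)$ is definably connected. We work locally on $M$, so $\FM=[X/\GL_d]$, and via Lemma~\ref{lem:fibbijSn} we identify $\pi^{-1}(x)$ with a definable subset $Z_x$ of $S_d=\GL_d(K\llp t^{1/\infty}\rrp)/\GL_d(K\llb t^{1/\infty}\rrb)$, i.e.\ a set of vertices of the building of $\GL_d$ over $F_\infty=K\llp t^{1/\infty}\rrp$. Here $Z_x=\set{g : g\tilde x \text{ extends over } K\llb t^{1/\infty}\rrb}$ for a fixed base point $\tilde x$. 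Since $x\in U(K\llp t\rrp)$ and $\pi^{-1}(U)\to U$ is an equivalence, the generic point $\tilde x_\eta$ is unique up to the free $\GL_d(F_\infty)$-action. So $Z_x$ is exactly the set of lattices (integral structures) for which the generic arc extends. The natural notion of connectedness is geodesic convexity in the building, and our goal is to prove it: for any two points $g_0,g_1\in Z_x$, the geodesic segment between them (realized inside a common apartment) stays in $Z_x$.

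\textbf{Step 1 (reduction to a torus).} Two vertices always lie in a common apartment. After a $\GL_d(K\llb t^{1/\infty}\rrb)$-translation we may therefore assume $g_0=1$ and $g_1=\lambda(t)$ for a rational cocharacter $\lambda$ of the standard torus $T$, i.e.\ $\lambda(t)=\mathrm{diag}(t^{a_1},\dots,t^{a_d})$ with $a_i\in\BQ$. The segment between them consists of the points $\lambda(t)^s$ for $s\in[0,1]\cap\BQ$.

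\textbf{Step 2 (convexity via $\FS$-completeness).} Both $\tilde x$ and $\lambda(t)\tilde x$ are integral and have the same generic fiber up to the $\GL_d$-action. Passing to a finite extension $R=K\llb t^{1/N}\rrb$ where all exponents become integral and rescaling $\lambda$, this is exactly the data of a morphism $\oST_R\setminus 0\to\FM$ lying over $x\colon\Spec R\to M$. Concretely, the two charts $\Spec R[S,T]_S/(ST-t)$ and $\Spec R[S,T]_T/(ST-t)$ correspond to the two integral models, glued along the generic point by the $\BG_m$-action through $\lambda$. By Assumption~\ref{basas}(1), $\FS$-completeness extends this to $\oST_R\to\FM$. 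The $\BG_m$-fixed/weight decomposition of $\oST_R$ then provides, for each intermediate weight, an integral point $\lambda(t)^{s}\tilde x$ with $s=j/n$. Hence every rational point of the segment lies in $Z_x$. Since buildings are uniquely geodesic and apartments are convex, $Z_x$ is convex, in particular path-connected along definable segments, and therefore definably connected in the o-minimal $\Gamma$-sense used in Proposition~\ref{prop:Volbuilding}.

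\textbf{Main obstacle.} The hardest part is Step 2. One has to check carefully that integrality of $\lambda(t)^{s}\tilde x$ for intermediate $s$ really follows from the extension over $\oST_R$; the expected mechanism is evaluating at the points $S=t^{s}u$, $T=t^{1-s}u^{-1}$, which are $R$-points of $\Spec R[S,T]/(ST-t)$. One must also handle the $\mu_N$-invariance and verify that the resulting points lie over $x$ and not merely over $M$. In addition, one needs definability: the segment must be a definable path, and non-vertex rational points must be handled via Proposition~\ref{prop:descent-BT-Finf} by passing to larger $N$.
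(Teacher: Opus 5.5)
Your proposal is essentially the paper's argument: from two points $z,z'$ of the fiber you build $\oST_R\setminus 0\to\FM$ over $x$, extend it by $\FS$-completeness, and get a path from $z$ to $z'$ inside $\pi^{-1}(x)$ because $\oST_R(K\llb t^{1/\infty}\rrb)$ is a closed interval of values $s=\val(S)$ (your evaluation at $S=t^su$, $T=t^{1-s}u^{-1}$); the path lies over $x$ automatically, since $\oST_R\to M$ factors through $\Spec R$. The common-apartment reduction and the identification of the intermediate points with the geodesic $\lambda(t)^s\tilde x$ go beyond what connectedness requires, though they do hold: by uniqueness of the extension, the underlying $\GL_d$-torsor is the reflexive extension $j_*$, which splits along the common apartment into rank-one pieces whose fibers at parameter $s$ interpolate the valuations linearly.
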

\begin{proof}
Let $z,z'\in \pi^{-1}(x)$. We can assume that they lie in the image of $X(K\llb t^{1/n} \rrb)$ for some $n$. 
This defines a morphism $\overline{ST}_{R_n}\backslash\set{0}\to [X/G]$, where
\[
\overline{ST}_{K\llb t^{1/n} \rrb}=[\Spec( K\llb t^{1/n} \rrb[s,\tau]/(s\tau-t^{1/n})) /\BG_m].
\]
Since $[X/G]$ is $\FS$-complete, this morphism extends as a morphism $\overline{ST}_{K\llb t^{1/n} \rrb} \to [X/G]$. 

Moreover, $\overline{ST}_{K\llb t^{1/n} \rrb}(K\llb t^{1/\infty} \rrb)$ can be identified with a closed interval in the value group. The above morphism sends its endpoints to $z$ and $z'$, hence it defines a continuous map from $z$ to $z'$ with image in $\pi^{-1}(x)$. 
\end{proof}

\begin{proof}[Proof of Proposition \ref{prop:vol:1}] 
By the cd-quotient stack property, we can assume that $\FM=[X/\GL_d]$. By Lemma~\ref{lem:fibbounded}, $\pi^{-1}(x)$ is definably compact, and by the previous lemma it is definably connected. 

Since being definably compact is preserved under definable homeomorphism by \cite[Proposition 4.2.9]{HL16}, its image by the bijection $h$ of Lemma~\ref{lem:fibbijSn} is then a definably compact and connected subset of $S_d$. Since $\Vol$ is preserved by definable bijections, the result now follows from Proposition~\ref{prop:Volbuilding} applied to this image.
\end{proof}

\subsection{Proof of Theorem \ref{mainlim} }

From Proposition~\ref{prop:vol:1}, we have $\Vol(\pi^{-1}(x))=1$ for every $x\in  M(K\llb t \rrb)^\natural$, hence
\[
\int_{M_Z^\natural}|\omega_{M^\natural,D}|=\int_{x\in M_Z^\natural}\Vol(\pi^{-1}(x)|\omega_{M^\natural,D}|.
\]
By Proposition~\ref{prop:vol-lim}, 
\[
\int_{x\in M_Z^\natural}\Vol(\pi^{-1}(x)|\omega_{M^\natural,D}|=\int_{x\in M_Z^\natural}-\lim_{T\to +\infty} \sum_{N\geq 1} \mu^\#(\pi_N^{-1}(x))|\omega_{M^\natural,D}| T^N.
\]
By Proposition~\ref{prop:switch:lim:int}, we can switch the limit and integral to get
\[
\int_{x\in M_Z^\natural}-\lim_{T\to +\infty} \sum_{N\geq 1} \mu_N(\pi_N^{-1}(x))|\omega_{M^\natural,D}| T^N=-\lim_{T\to +\infty} \sum_{N\geq 1} \int_{x\in M_Z^\natural}\mu^\#(\pi_N^{-1}(x))|\omega_{M^\natural,D}|.
\]
Since $\pi_N\colon \mcM(D^{1/N})\to M^\natural$ has source the twisted arcs of order $N$, by the Fubini theorem \ref{thm:fubini} and Lemma \ref{lem:measfubini}, we have
\[
\int_{x\in M_Z^\natural}\mu^\#(\pi_N^{-1}(x))|\omega_{M^\natural,D}|=\int_{\mcM(D^{1/N})_Z^\natural}|\omega_{\mcM,D}|.
\]
By Theorem~\ref{fixram}, we then have 
\[
\int_{\mcM(D^{1/N})_Z}|\omega_{\mcM,D}|=[I_{\mu_N}\mcM_Z]^{w,D}.
\]

\subsection{Examples}\label{expls}

\subsubsection{Deligne-Mumford stacks}\label{dms} Let $\mcM$ be a smooth Deligne-Mumford stack over $k\llb{t}\rrb$ and $\pi:\mcM \to M$ the map to its coarse moduli space. We further assume that $\mcM$ is generically stabilizer-free and we take $D=0$. 

Then Assumption \ref{basas}(2) is satisfied and thus by Remark \ref{erm} also (1) and (3). Finally (4) follows since $\mcM$ admits an étale cover from a smooth scheme.  

From comparing the definitions one sees that $|\omega_{M^\natural}|= |\omega_{orb}|$  and Corollary \ref{noD} then implies for any constructible $Z \subset M$  the so-called orbifold formula \cite{DL2002, Ya06,LW19,FLW}

\[  \int_{M^\natural_Z}|\omega_{M^\natural}|= [I_{\hat\mu} \mcM_Z]^{w}.\]

To see this, first notice that for a DM stack, $I_{\hat{\mu}}\mcM = \colim_N \Hom(B\mu_N,\mcM)$ is of finite type since the colimit stabilizes. It follows that there is a finite stratification $I_{\hat{\mu}}\mcM_Z = \sqcup_i V_i$ such that for every $N \geq 1$ the stack $I_{\mu_N}\mcM_Z$ is a union of strata $V_i$. If we denote by $N_i$ the smallest integer $N$ such that $V_i$ is a stratum of $I_{\mu_N}\mcM_Z$ we get

\[ -\lim_{T\to +\infty} \sum_{N\geq 1} [I_{\mu_N} \mcM_Z]^{w} \, T^N =-\lim_{T\to +\infty} \sum_i  [V_i]^w \sum_{N\geq 1} T^{N_iN}= [I_{\hat\mu} \mcM_Z]^{w}.\]

\subsubsection{Non-trivial $D$} Consider $\mcM = [\BA^3/\BG_m]$ where $\BG_m$ acts on $\BA^3$ with weights $(1,1,-1)$. Let 
\[\pi: \mcM \longrightarrow \BA^2\]
be the good moduli space map induced by $(x,y,z) \mapsto (xz,yz)$. If we take $D=-\{z=0\}$ we can again check that Assumption \ref{basas}(2) by removing all the orbits in $\BA^3$ with $0$ in their closure. Then (1) and (3) follow from Remark \ref{erm} and $(4)$ since $zdx\wedge dy \wedge dz$ induces a global trivialization of $\omega_{\mcM}(D)$. 

Thus we can compute $ \int_{M^\natural}|\omega_{M^\natural,D}|$ using Theorem \ref{mainlim} as follows. We stratify $\mcM$ into the three pieces $V_1 =  \mcM\setminus D$, $V_2 = D \setminus B\BG_m$ and $V_3 = B\BG_m$. For $i=1,2$ the pieces are schematic, so their contribution is independent of $N$ given by
\[  [I_{\mu_N} V_i]^{w,D} = \begin{cases} 1 &\text{ if } i=1,\\ \BL^{-2} &\text{ if } i=2. \end{cases} \]
For $V_3$ we compute first $w(0,\phi)$ and $w(\FN_{\mcM/D,0})$ for $\phi \in \Hom(\mu_N,\BG_m) \subset (0,1]$:

\[w(0,\phi)= \begin{cases} 2 &\text{ if } \phi = 1,\\ 2\phi + (1-\phi) -1=\phi &\text{ if } \phi \neq 1,\end{cases} \ \ \ \ w(\FN_{\mcM/D,0})= \begin{cases} 1 &\text{ if } \phi = 1,\\ 1-\phi &\text{ if } \phi \neq 1.\end{cases}  \]
From this we deduce
\[  [I_{\mu_N} V_3]^{w,D} = \frac{\BL^{-2}}{\BL^2-1} + (N-1) \frac{1}{\BL^2-1}.   \]
Passing to the limit and adding up the three contributions we finally obtain 
\[\int_{M^\natural}|\omega_{M^\natural,D}|= 1 + \BL^{-2} - \BL^{-2} = 1.\]
Of course we could have computed this without Theorem \ref{mainlim} by checking that $|\omega_{M^\natural,D}|$ is the standard measure on $M = \BA^2$.

\section{Applications}\label{apps}

\subsection{Stringy $E$-function for moduli spaces of vector bundles}\label{stef}

To illustrate how one can apply Theorem \ref{mainlim} in practice we compute here the stringy $E$-function for the moduli space $M = M_{2,0}$ of semi-stable rank $2$, degree $0$ vector bundles on a smooth projective curve $C$ of genus $g\geq 3$, recovering Theorem 6.1 from \cite{Kiem_Li} whose original proof relied on computations of explicit resolutions. 

\subsubsection{Stringy $E$-functions}

Let $X$ be a normal $\BQ$-Gorenstein variety with log-terminal singularities over an algebraically closed field $k$. For any canonical divisor $K_X$ and $r\geq 1$ such that $rK_X$ is Cartier, and thus the associated invertible sheaf defines a measure $|\omega_{can}|$ on $X$ as in Section \ref{fam}. The construction first appeared in \cite[Section 3]{DL2002} where it was called the Gorenstein measure.

 Using a log-resolution, one can give an explicit formula for the total volume $\int_{X(k\llb{t}\rrb)} |\omega_{can}|$ and one sees in particular that $\int_{X(k\llb{t}\rrb)} |\omega_{can}|$ converges in $\Ko(\Var_K)_{loc}[\BL^{1/r}]$ if and only if $X$ has log-terminal singularities, see also \cite[Proposition 6.6.2]{Ya?}.  

In this case, assuming $k =\BC$,  we can consider the image of $\BL^{\dim X}\int_{X(k\llb{t}\rrb)} |\omega_{can}|$ under the Hodge-Deligne realization
\[ E:\Ko(\Var_\BC)_{loc}[\BL^{1/r}] \longrightarrow \BZ[u,v][(uv)^{-1/r}, (uv-1)^{-n}; \ n \geq 1],   \]
which sends the class of any algebraic variety $Y$ to $E(Y; u,v) = \sum_{p,q,i} (-1)^i h_c^{i;p,q}(Y)u^pv^q$, where the $h_c^{i;p,q}(Y)$ are the compactly supported mixed Hodge numbers of $Y$.
Again using a log-resolution one sees that this image agrees with the stringy $E$-function $E_{st}(X;u,v)$ of a log-terminal $X$ as defined by Batyrev \cite{batyrev_stringy}.

We will use the following criterion to compare $|\omega_{can}|$ with $|\omega_{M^\natural}|$. 

\begin{lemma}\label{codim2}
Assume $\pi: \cM \to M$ satisfies Assumption \ref{basas} with $D=0$ and furthermore that $M$ is $\BQ$-Gorenstein. If there is an open $U \subset M$ such that $\codim_M M \setminus U \geq 2$ and $\pi^{-1}(U) \to U$ is an equivalence, then $ |\omega_{can}| = |\omega_{M^\natural}|$. 
\end{lemma}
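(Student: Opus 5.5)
The plan is to compare the two measures locally on trivializing opens, where each is of the form $|\omega|$ for some pluricanonical form $\omega$. The invertible sheaf $\Oc(rK_M)$ on $M$ and the line bundle $\omega_\cM^{\otimes r}$ on $\cM$ agree on $\pi^{-1}(U)\cong U$. The point is to show that a local trivializing section of one induces a local trivializing section of the other, so the two densities agree on $M^\natural$.

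\textbf{Step 1: matching the line bundles.} Fix $r\geq 1$ such that $rK_M$ is Cartier and some power $\omega_\cM^{\otimes r}$ is Zariski-locally trivial (Assumption \ref{basas}(\ref{ginvb})). Since $\pi^{-1}(U)\to U$ is an equivalence, on $U$ we have $\pi^*\Oc(rK_M)|_U\cong \omega_\cM^{\otimes r}|_{\pi^{-1}(U)}$. Let $s$ be a local generator of $\Oc(rK_M)$ on an open $V\subset M$. Then $\pi^*s$ is a section of $\omega_\cM^{\otimes r}$ on $\pi^{-1}(V\cap U)$.

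\textbf{Step 2: extension across the complement (main obstacle).} I would show that $\pi^*s$ extends to a nowhere vanishing section of $\omega_\cM^{\otimes r}$ on $\pi^{-1}(V)$. Conversely, a local trivialization $\omega_i$ of $\omega_\cM^{\otimes r}$ on some $\FV_i$ descends to a rational section of $\Oc(rK_M)$ that is regular on $U$. The hard part is that $\pi^{-1}(M\setminus U)$ need not have codimension $\geq 2$ in $\cM$, since $\pi$ can contract divisors. I would first try to exploit the codimension hypothesis on $M$ and work in the direction of $M$. On $V$, the quotient $\pi_*$ of the $\omega_i$ by $s$ is a rational function that is invertible on $U\cap V$. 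Any zeros or poles would therefore be supported on divisors of $\cM$ lying over $M\setminus U$, possibly exceptional ones. To rule them out I would evaluate on arcs. For $x\in M^\natural$, Assumption \ref{basas}(\ref{propex}) gives a twisted arc $\tilde x\in\cM(D^{1/N})$ lifting $x$, and the ratio $f=\omega_i/\pi^*s$ is a unit along $\tilde x$ if and only if it has no divisorial zeros or poles through $\tilde x(0)$.

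So I would instead prove directly that $|f(\tilde x)|=1$ for all such arcs. The key fact is that $f$ is a regular function on the normal variety $U\cap V$ up to $\pi$. Since $\codim(M\setminus U)\geq2$ and $M$ is normal, $f$ and $f^{-1}$ descend to regular functions on $V$, by Hartogs extension for functions on $M$ that are regular on $U$. Here $f$ is viewed as a function on $M$, which makes sense because on $U$ both sections live on $M$. Hence $f\in\Oc(V)^\times$, and $|f(x)|=1$ for every $x\in V(K\llb t\rrb)$.

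\textbf{Step 3: conclusion.} By definition, on $M^\natural_i\cap V$ the measure $|\omega_{M^\natural}|$ is $|\omega_i|^{1/r}$ and $|\omega_{can}|$ is $|s|^{1/r}$. Both are computed at points of $U(K\llp t\rrp)$ where $\omega_i=f s$ with $|f|=1$, so they agree. It remains to observe that $M(K\llb t\rrb)\setminus M^\natural$ consists of arcs inside $M\setminus U$, which has measure zero for $|\omega_{can}|$. Gluing over the covers then gives $|\omega_{can}|=|\omega_{M^\natural}|$.
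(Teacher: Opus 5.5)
Your overall strategy is the paper's: identify $\omega_\cM^{\otimes r}$ with $\Oc(rK_M)$ over $\pi^{-1}(U)\cong U$, then use Hartogs extension on the normal variety $M$ across $M\setminus U$, which has codimension at least $2$. The paper's proof is exactly the sentence saying that the forms defining $|\omega_{M^\natural}|$ extend to nowhere vanishing sections of $\Oc(rK_M)$.

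\textbf{The gap.} The ``key fact'' in your Step 2, that $f=\omega_i/\pi^*s$ is regular and invertible on all of $U\cap V$, is not justified.
\begin{itemize}
\item The section $\omega_i$ only trivializes $\omega_\cM^{\otimes r}$ on $\FV_i$, and in the definition of $|\omega_{M^\natural}|$ this is an arbitrary Zariski open of $\cM$.
\item So $f$ is a unit only on the image $W\subset U\cap V$ of $\FV_i\cap\pi^{-1}(U\cap V)$. The complement of $W$ in $V$ can contain divisors, so Hartogs says nothing.
\item Indeed $f$ is only determined up to $\Oc(\FV_i)^\times$, and such units typically have zeros and poles on $V$ away from $W$. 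So $f$ need not be a unit on $V$.
\end{itemize}
What you actually need is that $f$ is a unit on $\FV_i\cap\pi^{-1}(V)$. For a non-saturated $\FV_i$, this is exactly the contracted-divisor problem you yourself identified, and your argument on $M$ does not address it.

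\textbf{The repair.} Work with trivializations on saturated opens $\FV_i=\pi^{-1}(V_i)$, with $V_i\subset M$ open. This is harmless because the glued measure does not depend on the cover.
\begin{itemize}
\item Such a cover is not literally provided by Assumption~\ref{basas}(\ref{ginvb}). It does exist for good moduli spaces, which is the case where the lemma is applied. There $\pi$ is universally closed and sends disjoint closed subsets to disjoint closed subsets. Hence any open neighbourhood $\FV$ of a closed point contains the saturated open $\pi^{-1}(M\setminus\pi(\cM\setminus\FV))$.
\item With this choice, $f$ is an invertible function on $\pi^{-1}(V_i\cap V\cap U)\cong V_i\cap V\cap U$.
\item Hartogs extends it to a unit $g$ on $V_i\cap V$. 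Then $\pi^*g$ extends $f$ to a unit on all of $\pi^{-1}(V_i\cap V)$, contracted divisors included; that is, $\pi^*\Oc(rK_M)\cong\omega_\cM^{\otimes r}$ there.
\item Consequently $|\omega_i|$ and $|s|$ induce the same measure on arcs of $M^\natural_i$ based in $V$, and your Step 3 goes through. Your detour through evaluating $f$ on twisted arcs becomes unnecessary.
\end{itemize}
The paper's one-line proof tacitly makes the same choice of saturated trivializing opens.
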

\begin{proof} 
This follows directly from the construction, since the local non-vanishing forms used to define $|\omega_{M^\natural}|$ extend by the codimension $2$ assumption to non-vanishing sections of some power of the canonical bundle on $M$, which are used to define $|\omega_{can}|$.
\end{proof}

\subsubsection{Vector bundles on curves}

Let $C$ be a smooth projective curve $C$ of genus $g \geq 2$. For any $r \geq 1$ and $d \in \BZ$ there is a smooth algebraic stack $\BM_{r,d}$ parametrizing semi-stable rank $r$ and degree $d$ vector bundles on $C$. Since the automorphism group of every vector bundle contains a central copy of $\BG_m$ coming from scalar automorphisms, we may consider the $\BG_m$-rigidification $\cM_{r,d}$ of $\BM_{r,d}$ in the sense of \cite{ACV03}. Finally we denote by $M_{r,d}$ the coarse moduli space, which is Gorenstein by \cite{DN89}.

\begin{proposition}\label{vbf} The good moduli space map $\cM_{r,d} \to M_{r,d}$ satisfies Assumption \ref{basas} for $D=0$. In particular 
\[\int_{ M_{r,d}^\natural} |\omega_{M_{r,d}^\natural}| =  -\lim_{T\to +\infty} \sum_{N\geq 1} [I_{\mu_N} \mcM_{r,d}]^{w} \, T^N. \]

Unless $g=2$ and $(r,d) = (2,d)$ with $d$ even, we have  $ |\omega_{can}| = |\omega_{M_{r,d}^\natural}|$.

\end{proposition}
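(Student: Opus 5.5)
We check the four conditions of Assumption \ref{basas}, together with the cd-quotient and smoothness hypotheses, for $\pi:\cM_{r,d}\to M_{r,d}$. The displayed formula is then exactly Corollary \ref{noD} with $Z = M_{r,d}$. The comparison of measures is handled at the end via Lemma \ref{codim2}.

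\emph{Stack-theoretic inputs.} First, $\BM_{r,d}$ is a smooth algebraic stack with affine stabilizers, since deformations of vector bundles on a curve are unobstructed. It admits a good moduli space $M_{r,d}$ by GIT: it is a quotient $[R/\GL_N]$ of an open subset of a Quot scheme, and $M_{r,d}$ is the GIT quotient. Rigidifying by the central $\BG_m$ preserves smoothness. It also keeps $M_{r,d}$ as good moduli space, because the central $\BG_m$ is linearly reductive; it is the quotient $[R/\mathrm{PGL}_N]$. Hence $\cM_{r,d}\to M_{r,d}$ is a good moduli space morphism, and by \cite[Theorem 6.1]{AHR19} the stack $\cM_{r,d}$ is a cd-quotient stack.

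\emph{Conditions (1)--(4).} By Remark \ref{erm}, conditions (1) and (3) follow once (2) holds. For (2), take $U$ to be the stable locus $M^s_{r,d}$. It is dense (for $g\geq 2$ stable bundles exist, and $M_{r,d}$ is irreducible). Over $U$ the automorphism group of every object is the scalar $\BG_m$, so after rigidification the stabilizers are trivial and $\pi^{-1}(U)\to U$ is a good moduli space with trivial stabilizers, i.e.\ an isomorphism. For (4), the canonical bundle of $\cM_{r,d}$ is a line bundle on a quotient stack $[R/\mathrm{PGL}_N]$. I would like a power of it to be Zariski-locally trivial, i.e.\ pulled back from $M_{r,d}$ locally. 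The natural choice is the determinant of cohomology: $\omega_{\BM}$ is a power of the determinant-of-cohomology line bundle twisted appropriately. A suitable power of this bundle has trivial $\BG_m$-weights at the polystable points, so it descends to $M_{r,d}$ by Kempf's descent lemma (stabilizers act trivially on fibers at closed points). It is therefore Zariski-locally trivial. Concretely, for $\mathcal{E}$ the universal family on the rigidification (defined up to twist), the determinant $\det R\Gamma(\mathcal{E}nd\,\mathcal{E})^{-1}$ is canonically defined on $\cM_{r,d}$, and $\mathcal{E}nd$ has weight zero under stabilizers. This verification of (4) is the main obstacle. The stabilizer at a polystable point $\oplus E_i^{m_i}$ is $\prod\GL_{m_i}/\BG_m$, which acts on $\Ext^1(E,E)$ and $\Hom(E,E)$. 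I need to check that the determinant character is trivial, which holds because these representations are self-dual via Serre duality ($\Ext^1(E,E)\cong\Hom(E,E)^\vee$ twisted by $K_C$, with $\dim$ computed by Riemann--Roch), so the determinant character is of order at most $2$. Passing to the square removes it.

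\emph{Comparison of measures.} For the last claim I apply Lemma \ref{codim2}, using that $M_{r,d}$ is Gorenstein \cite{DN89}. It remains to check that $\codim(M_{r,d}\setminus M^s_{r,d})\geq 2$. The strictly semistable locus is the image of $\prod_i M^s_{r_i,d_i}$ with $\sum r_i = r$ and $d_i/r_i = d/r$. Its dimension is maximized for two summands, giving $\sum_i (r_i^2(g-1)+1)$ compared with $r^2(g-1)+1$. The codimension is therefore at least $2r_1r_2(g-1)-1$, which is $\geq 2$ except when $g=2$ and $r_1=r_2=1$, that is $(r,d)=(2,d)$ with $d$ even. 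If $d$ is odd for $r=2$, there is no strictly semistable locus at all.
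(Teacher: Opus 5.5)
Your proof is correct. For conditions (1)--(3), the displayed formula (Corollary \ref{noD} with $Z=M_{r,d}$) and the codimension count feeding Lemma \ref{codim2}, it matches the paper; the paper cites \cite[Lemma 4.3]{NR69} where you compute the codimension $2r_1r_2(g-1)-1$ explicitly.

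The genuine difference is in condition (4).
\begin{itemize}
\item The paper uses the Luna \'etale slice theorem \cite[Theorem 1.2]{AHR20} to model $\cM_{r,d}$ near a polystable point $x$ by $[\Ext^1(x,x)/\Aut(x)]$. It then uses that the standard volume form on $\Ext^1(x,x)$ is $\Aut(x)$-invariant.
\item You instead show that $\omega_{\cM_{r,d}}$ (or a power) descends along the good moduli space map by the Kempf/Alper descent criterion. It is then pulled back from a line bundle on the scheme $M_{r,d}$, hence Zariski-locally trivial.
\end{itemize}
Your route gives a global statement and lands directly on Zariski-local triviality. The slice model is naturally only \'etale-local, but it avoids any global identification of $\omega$.

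Both arguments rest on the same input, which the paper simply asserts: the determinant characters of $\Aut(x)$ on $\Ext^1(x,x)$ and on $\Hom(x,x)/k$ are trivial. Your justification of this needs repair. Serre duality gives $\Ext^1(E,E)^\vee\cong\Hom(E,E\otimes K_C)$, which is not a self-duality of $\Ext^1(E,E)$. The correct argument is as follows.
\begin{itemize}
\item Write $E=\bigoplus_i E_i\otimes V_i$ with the $E_i$ pairwise non-isomorphic stable bundles of equal slope.
\item As $\prod_i\GL(V_i)$-representations, $\Ext^1(E,E)\cong\bigoplus_{i,j}\Ext^1(E_i,E_j)\otimes\Hom(V_i,V_j)$.
\item Riemann--Roch gives $\dim\Ext^1(E_i,E_j)=r_ir_j(g-1)+\delta_{ij}$, which is symmetric in $(i,j)$.
\item The $\det V_k$-exponents contributed by $\Hom(V_i,V_j)$ and $\Hom(V_j,V_i)$ cancel, so the determinant character is trivial.
\item The term $\Hom(E,E)=\bigoplus_i\mathrm{End}(V_i)$ is clearly self-dual.
\end{itemize}
Moreover, the character group of $\prod_i\GL(V_i)/\BG_m$ is torsion-free, so no squaring is needed: $\omega_{\cM_{r,d}}$ itself descends.
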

\begin{proof} Since $g \geq 2$, the moduli space of stable bundles $M^{st} \subset M$ provides a dense open as in \ref{basas} (2). Conditions (1) and (3) follow from Remark \ref{erm}. In order to verify (4) for $D=0$ we use Luna's slice theorem for smooth stacks \cite[Theorem 1.2]{AHR20}. Namely every polystable point $x \in \cM_{r,d}$ admits an open neighborhood $U_x$ which, up to an étale covering, is isomorphic to an open neighborhood of $0$ in $[\Ext^1(x,x)/\Aut(x)]$. Since the standard form on the vector space $\Ext^1(x,x)$ is invariant under the $\Aut(x)$-action, it induces a non-vanishing trivialization of $\omega_{\cM_{r,d}|U_x}$.

Finally unless $g=2$ and $(r,d) = (2,d)$ with $d$ even, the complement of $M^{st}$ in $M$ has codimension at least $2$ which can be seen from a dimension calculation as for example in \cite[Lemma 4.3]{NR69} and thus  Lemma \ref{codim2} implies $ |\omega_{can}| = |\omega_{M_{r,d}^\natural}|$.
\end{proof}

In principle, arguments similar to \cite[Section 5.5]{GWZ24} allow us to compute the right hand side of Proposition \ref{vbf} for all pairs $r,d$ in terms of motivic classes of $\mcM_{r',d'}$ with $r' \leq r$ and $d/r = d'/r'$. However already the leading term $[\mcM_{r,d}]$ is quite tricky to determine as one has to resolve the Harder-Narasimhan recursion \cite{Za96} and it is unclear to the authors whether the formula would be meaningful in any way. Instead we illustrate the computation in the simplest non-trivial case.

\begin{corollary} For $g \geq 3$ we have
\[ \int_{ M_{2,0}(k\llb{t}\rrb)} |\omega_{can}| = \BL^{-4g+3} [\mcM_{2, 0}] -\BL^{-3g+2}(\BL-1)\left( [\mathcal{P}ic^0(C)]^2 - [\Sym_2(\mathcal{P}ic^0(C))]\right),\]
where $\mathcal{P}ic^0(C)$ denotes the Picard stack of $C$. In particular
\begin{align*}
E_{st}(M_{2,0};u,v) &=  
 \frac{(uv)^{g-1}(1-u)^{2g}(1-v)^g -(1-u^2v)^{2g}(1-uv^2)^g(1-u)^g(1-v)^g}{(1-uv)^2(1-(uv)^2)}  - \\ &\frac{(uv)^{g-1}}{2}\left(\frac{(1-u)^{2g}(1-v)^{2g}}{(1-uv)^2} +\frac{(1-u^2)^g(1-v^2)^g}{1-(uv)^2}  \right). 
\end{align*}

\begin{rmk}Note that the above formula for $E_{st}(M_{2,0};u,v)$ is used in \cite{SU2026}
 to exhibit a counterexample to Batyrev's conjecture on the non-negativity of stringy Hodge numbers.
\end{rmk}

\end{corollary}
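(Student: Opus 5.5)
We would apply Proposition \ref{vbf} with $(r,d)=(2,0)$, which is allowed since $g\geq 3$. This gives $|\omega_{can}|=|\omega_{M^\natural}|$. Moreover, $M_{2,0}^\natural(k\llb t\rrb)$ differs from $M_{2,0}(k\llb t\rrb)$ only by arcs lying entirely in the strictly semistable locus, which has measure zero. So the left hand side equals $-\lim_{T\to+\infty}\sum_N [I_{\mu_N}\mcM_{2,0}]^w T^N$, and the task reduces to computing the cyclotomic inertia of the rigidified stack $\mcM_{2,0}$ together with its weights.

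We stratify by the polystable type of the underlying point.

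\emph{Stable locus.} Here the automorphism group in $\mcM_{2,0}$ is trivial. Only $\phi=1$ contributes, with weight $w=\dim\mcM_{2,0}=4g-3$, giving $\BL^{-4g+3}[\mcM^{st}]$.

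\emph{Points whose polystable part is $L_1\oplus L_2$ with $L_1\not\cong L_2$.} After rigidification, $\Aut=\BG_m$. A homomorphism $\phi\colon\mu_N\to\BG_m$ is a character $a\in\frac1N\BZ\cap(0,1]$.

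\emph{Points whose polystable part is $L\oplus L$.} Here $\Aut=\mathrm{PGL}_2$, and $\phi$ is given up to conjugacy by a character of the torus, modulo the Weyl group.

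For $\phi=1$, the sum over all strata gives $\BL^{-4g+3}[\mcM_{2,0}]$. This term is independent of $N$, so it contributes $\BL^{-4g+3}[\mcM_{2,0}]$ in the limit, since $-\lim\sum_N cT^N=c$. For nontrivial $\phi$, the fixed locus $\Hom(B\mu_N,\mcM)$ over the relevant points is described by the $\mu_N$-equivariant structure. The tangent complex of $\mcM_{2,0}$ at $E=L_1\oplus L_2$ is $\Ext^0_0\to\Ext^1$, where the subscript $0$ means trace-free. Under $\phi$ it decomposes as $\Ext^\bullet(L_i,L_i)$ with trivial action, and $\Ext^\bullet(L_1,L_2)$, $\Ext^\bullet(L_2,L_1)$ with characters $a$ and $1-a$. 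By Riemann–Roch, $\dim\Ext^1(L_1,L_2)-\dim\Hom(L_1,L_2)=g-1$. Hence for $a\neq 1$ we get
\[w=(g-1)\bigl(a+(1-a)\bigr)+2g-1=3g-2,\]
where $2g-1=2g-(\text{rigidified }\Lie)$, with care about the rigidification. The weight is thus independent of $a$. The corresponding component of $I_{\mu_N}\mcM$ parametrises pairs of line bundles, namely $\mathcal{P}ic^0\times\mathcal{P}ic^0$ modulo the rigidified scalars, with class roughly $(\BL-1)[\mathcal{P}ic^0]^2$ and the diagonal pieces corrected by the symmetric square. Summing over the $N-1$ nontrivial characters modulo the Weyl involution $a\mapsto 1-a$ yields a contribution that is linear in $N$, of the form $\alpha N+\beta$. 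Applying $-\lim\sum_N(\alpha N+\beta)T^N=-\beta$, only the constant term survives. We expect this constant to be $-\BL^{-3g+2}(\BL-1)\bigl([\mathcal{P}ic^0]^2-[\Sym_2\mathcal{P}ic^0]\bigr)$, the antisymmetric combination arising from the difference between ordered and unordered pairs that the Weyl quotient produces (the $a=1/2$ fixed point for even $N$ being accounted for via the parity-periodic terms). Pinning down this constant is the main obstacle. It requires careful bookkeeping of the Weyl quotient, of $\Aut(x,\phi)$, of the class $[B\Aut(x,\phi)]$, and of the $2$-periodic dependence of $[I_{\mu_N}]$ on $N$, which contributes through $\frac{T^2}{1-T^2}\to-1$.

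Finally, we apply the $E$-realization, multiplying by $\BL^{\dim M}=\BL^{4g-3}$. We use the known formula for $[\mcM_{2,0}]$, obtained by resolving the Harder–Narasimhan recursion for rank $2$ with the $\BG_m$-rigidification, together with $E(\mathcal{P}ic^0)=(1-u)^g(1-v)^g/(uv-1)$ and the Macdonald formula for $E(\Sym_2)$. These give the displayed expression for $E_{st}(M_{2,0};u,v)$.
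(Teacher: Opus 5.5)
There is a genuine gap. Several of your steps are fine and match the paper:
\begin{itemize}
\item the reduction via Proposition~\ref{vbf} to $-\lim_{T\to+\infty}\sum_N[I_{\mu_N}\mcM_{2,0}]^wT^N$;
\item the contribution $\BL^{-4g+3}[\mcM_{2,0}]$ of the trivial homomorphism;
\item the weight $w=3g-2$, which holds uniformly since $\dim\Ext^1(L_1,L_2)-\dim\Hom(L_1,L_2)=g-1$ always;
\item the final $E$-realization.
\end{itemize}
What is missing is the core of the corollary: identifying and counting the components of $I_{\mu_N}\mcM_{2,0}$ with nontrivial $\phi$, and evaluating the limit. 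You say you ``expect'' the constant and call pinning it down the main obstacle, so the statement is asserted rather than proved.

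The heuristics around that step are also off in three places.
\begin{itemize}
\item The count is not linear in $N$. It is $\lfloor\frac{N-1}{2}\rfloor$ copies of one component plus one extra component for even $N$.
\item Your limit for linear input has the wrong sign. Since $\sum_N(\alpha N+\beta)T^N=\bigl((\alpha+\beta)T-\beta T^2\bigr)/(1-T)^2$, one gets $-\lim\sum_N(\alpha N+\beta)T^N=+\beta$, consistent with your own remark $-\lim\sum_N cT^N=c$.
\item The symmetric square is not a correction on the diagonal $L_1\cong L_2$. Also, ``$\Aut=\BG_m$ after rigidification'' holds only for the polystable points themselves; non-split extensions have trivial (or unipotent) rigidified automorphisms and contribute only to $\phi=1$.
\end{itemize}

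The missing idea is the paper's eigenspace decomposition. Lift $\phi\colon\mu_N\to\Aut(E)/\BG_m$ to $\tilde\phi\colon\mu_N\to\Aut(E)$ and split $E=E_{\chi_1}\oplus E_{\chi_2}$ into eigen-line-bundles, with $(\chi_1,\chi_2)$ defined up to a simultaneous shift. An automorphism of $(E,\phi)$ can permute the eigen-line-bundles only if this matches a shift of characters. Two cases follow.
\begin{itemize}
\item If $\chi_1-\chi_2\notin\{0,N/2\}$, automorphisms preserve the eigenspaces, so $\Aut(E,\phi)$ is the diagonal torus modulo scalars, whether or not $E_{\chi_1}\cong E_{\chi_2}$. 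This gives $\lfloor\frac{N-1}{2}\rfloor$ components, each the rigidification of $\mathcal{P}ic^0(C)\times\mathcal{P}ic^0(C)$, of class $(\BL-1)[\mathcal{P}ic^0(C)]^2$.
\item If $N$ is even and $\chi_1-\chi_2=N/2$, the swap of the two factors also commutes with $\phi$. This gives exactly one component, the rigidification of $\Sym_2\mathcal{P}ic^0(C)$. In your $\mathrm{PGL}_2$ language, the centralizer of the order-two class is the normalizer of the torus, not the torus.
\end{itemize}
The signs then come from two limits. First, $\sum_N\lfloor\frac{N-1}{2}\rfloor T^N=(T+T^2)T^2/(1-T^2)^2$, so $-\lim$ of it is $-1$. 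Second, $-\lim\sum_{N\ \mathrm{even}}T^N=-\lim\, T^2/(1-T^2)=1$. Together these produce $-\BL^{-3g+2}(\BL-1)\bigl([\mathcal{P}ic^0(C)]^2-[\Sym_2\mathcal{P}ic^0(C)]\bigr)$.

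Your stratification by polystable type could reproduce this by gluing the $L_1\not\cong L_2$ and $L_1\cong L_2$ strata for each conjugacy class of $\phi$. As written, however, that computation is absent.
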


\begin{proof} The proof goes along the lines of \cite[Section 5.5]{GWZ24}. A point of  $I_{\mu_N} \mcM_{2,0}$ corresponds to a pair of a rank $2$, degree $0$ vector bundle $E$ on $C$ together with a morphism $\phi: \mu_N \to \underline{\Aut}(E)$, where $\underline{\Aut}(E) = \Aut(E)/\BG_m$. Since we work over an algebraically closed field, there exists a lift $\tilde{\phi}: \mu_N \to \Aut(E)$ and thus $E$ decomposes into eigenspaces $E = \bigoplus_{\chi \in\BZ/N\BZ} E_{\chi}$ for this $\mu_N$-action, where we use $\mathrm{Irr}(\mu_N) = \BZ/N\BZ$. Choosing a different lift of $\phi$ results in shifting the indexing of the $E_\chi$ by a constant in $\BZ/N\BZ$. There are 3 cases to consider:

First if $E = E_\chi$ for a single $\chi \in \BZ/N\BZ$, which corresponds to $\phi$ being the trivial homomorphism. This gives a component of $I_{\mu_N}\mcM_{r,d}$ which can be identified with $\mcM_{2,0}$. The weight of the trivial homomorphism is simply the dimension of $\mcM_{2,0}$ i.e. $4g-3$. 

If $E = E_{\chi_1} \oplus E_{\chi_2}$ with $(\chi_1,\chi_2) \neq (\chi_2+a,\chi_1+a)$ for all $a \in \BZ/N\BZ$. In this case both $E_{\chi_1}$ and $E_{\chi_2}$ are degree $0$ line bundles and the only automorphisms of the pair $(E,\phi)$ are the scalar automorphisms on each factor. Thus for a fixed pair $(\chi_1,\chi_2)$ the corresponding component of  $I_{\mu_N} \mcM_{2,0}$ is isomorphic to the $\BG_m$-rigidification of $\mathcal{P}ic^0(C)\times \mathcal{P}ic^0(C)$, where $\mathcal{P}ic^0(C)$ denotes the Picard stack of $C$. Furthermore one can check that up to simultaneous scaling by $a \in \BZ/N\BZ$ there are $\lfloor \frac{N-1}{2}\rfloor$ such pairs $(\chi_1,\chi_2)$. The weight in this case can be computed as in \cite[Lemma 5.19]{GWZ24} to be $3g-2$.

Finally, if $E = E_{\chi_1} \oplus E_{\chi_2}$ with $(\chi_1,\chi_2) = (\chi_2+a,\chi_1+a)$ for some $a \in \BZ/N\BZ$, we must have that $N$ is even and $a = N/2$. In this case $E_{\chi_1}$ and $E_{\chi_2}$ are still degree $0$ line bundles, but switching the two factors is now an automorphism of the pair $(E,\phi)$. In other words the corresponding component of  $I_{\mu_N} \mcM_{2,0}$ is isomorphic to the $\BG_m$-rigidification of $\Sym_2\mathcal{P}ic^0(C)$. There is only one such component and the weight equals $3g-2$ as before. 

Putting everything together we get

\begin{multline*} \int_{M_{2,0}(k\llb{t}\rrb)} |\omega_{can}|  = \\ 
-\lim_{T\to +\infty} \sum_{N\geq 1} \BL^{-4g+3} [\mcM_{2,0}] + 
\lfloor \frac{N-1}{2}\rfloor\BL^{-3g+2}(\BL-1)[\mathcal{P}ic^0(C)]^2 + \BL^{-3g+2}(\BL-1)[\Sym_2(\mathcal{P}ic^0(C))] T^N \\
= \BL^{-4g+3} [\mcM_{2,0}] - \BL^{-3g+2}(\BL-1)[\mathcal{P}ic^0(C)]^2 + \BL^{-3g+2}(\BL-1)[\Sym_2(\mathcal{P}ic^0(C))].
\end{multline*}

The $E$-polynomial realization follows from the following formulas:

\[E(\mcM_{2,0};u,v)=  \frac{(1-u^2v)^g(1-uv^2)^g(1-u)^{g}(1-v)^{g} -(uv)^{g-1}(1-u)^{2g}(1-v)^{2g}}{(1-uv)(1-(uv)^2)},  \]
which follows from \cite[Theorem 5.16]{He10} and \cite[Proposition 4.4]{Te98}, see also \cite[Remark 4.2]{BD07}. Furthermore
\[E(\mathcal{P}ic^0(C);u,v) = \frac{(1-u)^g(1-v)^g}{uv-1},\]
and for any variety $X$,
\[E(\Sym_2(X);u,v)= \frac{1}{2} E(X;u,v)^2 + \frac{1}{2} E(X;u^2,v^2),\]
which can be deduced for example from \cite{Ma62}. 
\end{proof}

The formula for $E_{st}(M_{2,0};u,v)$ agrees up to a factor of $E(\mathcal{P}ic^0(C);u,v)=\frac{(1-u)^g(1-v)^g}{uv-1}$ with the formula in \cite[Theorem 6.1]{Kiem_Li}, which comes from the fact that in \textit{loc.cit} the moduli space of vector bundles with fixed determinant is considered.

\subsection{Quasi-torsors and klt-singularities}\label{tqt}

Let $(Y,\Delta)$ be a klt-pair. Let $G$ be a linear algebraic group acting on a normal variety  $X$ admitting a $G$-invariant morphism $\phi:X \to Y$ such that there exist open subsets $U \subset Y$ and $V=\phi^{-1}(U) \subset X$, whose complements have codimension at least two and such that 
\[ \phi_{|V}\colon V \longrightarrow U  \]
is a $G$-torsor. 

These conditions are for example satisfied if $\phi$ is a $G$-quasi-torsor in the sense of \cite{BM}. Because of the codimension $2$ assumption $(X,\phi^{-1}\Delta)$ is a still a log pair and we have the following

\begin{theorem}\label{bmthm} If $G=T$ is a torus, then $(X,\phi^{-1}\Delta)$ is a klt-pair. 
\end{theorem}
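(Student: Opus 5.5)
We plan to reduce to a convergence statement for a motivic integral and then apply the machinery of Theorem \ref{mainlim}, now with the roles reversed: the stack is $\mcM=[X/T]$ and its good-moduli-type map goes to $Y$. The klt criterion we use is the standard motivic one (as recalled in Section \ref{stef}): a log pair $(Z,\Delta_Z)$ is klt if and only if $\int_{Z(k\llb t\rrb)}|\omega_{Z,\Delta_Z}|$ converges (in $\Ko(\Var_k)_{loc}[\BL^{1/r}]$), equivalently all discrepancies exceed $-1$. Since klt is local and étale/Luna-slice local, we may assume $X$ affine with $T$-action and $Y=X/\!\!/T$.

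First, we would use klt of $(X,\phi^{-1}\Delta)$ for small $\Delta$-perturbation arguments in reverse. Concretely, we take a $T$-equivariant log resolution $\tilde X\to X$ (equivariant resolution exists in characteristic zero). This gives a smooth stack $\tilde{\mcM}=[\tilde X^{s}/T]$, where $\tilde X^{s}$ is a suitable open such that $\tilde{\mcM}\to Y$ is a good moduli space. The open needs to be chosen so that $\tilde{\mcM}\to Y$ is still birational over $U$, which holds since $V\to U$ is a torsor. We also take $D$ to be the divisor on $\tilde{\mcM}$ with $K_{\tilde{\mcM}}+D$ pulled back from $K_X+\phi^{-1}\Delta$ on $[X/T]$. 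Its coefficients $u_i$ are, up to sign, the discrepancies $a_i$ of $(X,\phi^{-1}\Delta)$ along $T$-invariant exceptional divisors. By the codimension-two hypothesis, $K_{[X/T]}+\phi^{-1}\Delta$ agrees with the pullback of $K_Y+\Delta$ over $U$. By the analogue of Lemma \ref{codim2}, the measure $|\omega_{Y^\natural,D}|$ then coincides with the log-canonical measure of $(Y,\Delta)$, whose total integral converges because $(Y,\Delta)$ is klt.

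Second, we apply Theorem \ref{mainlim} (more precisely its proof, since we cannot assume $u_i<1$ a priori). The series $\sum_N \int_{\tilde{\mcM}(D^{1/N})}|\omega_D|T^N$ is computed by Theorem \ref{fixram}. For $N=1$ and the trivial $\phi$ this contains the factors $(1-\BL^{-1})/(1-\BL^{-1+u_i})$. These factors are finite precisely when $u_i<1$, i.e. when the discrepancy exceeds $-1$. Meanwhile, the left side is the finite klt volume of $Y$, and the fiber volumes are $1$ by Proposition \ref{prop:vol:1}. Positivity should prevent cancellation in the Fubini step (Theorem \ref{thm:fubini}), with $\mu^\#\geq$ the untwisted contribution. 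For this reason we would rather work with Cluckers–Loeser integrals of positive functions, or with $p$-adic/point-counting specializations, where all terms are nonnegative.

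The main obstacle is exactly this positivity/divergence step. Theorem \ref{mainlim} is an identity of formal limits, and a divergent $N=1$ piece could in principle cancel in the limit. To handle it, we would use the toroidal structure (Example \ref{ex:toroidal}): for a torus, the fibers $\pi_N^{-1}(x)$ are lattice points of a compact polytope $Z_x$ containing the untwisted point. Hence $\mu^\#(\pi_1^{-1}(x))\geq 1$, and the integrand for $N=1$ is dominated by the degree-zero limit computation. A bad divisor with $u_i\ge1$ would produce a divergent arc set of positive measure lying in $\pi_1$-fibers over a set of finite $Y$-measure, giving a contradiction.
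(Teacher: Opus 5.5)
There is a genuine gap, located at the step you yourself flag as the main obstacle, and the mechanism you propose does not close it. Once everything is unwound, you need $\int_{Y^\natural}\mu^\#(\pi_1^{-1}(x))\,|\omega_{\Delta}|$ to converge, and for that you need an \emph{upper} bound on the fiber contribution. Your inequality $\mu^\#(\pi_1^{-1}(x))\geq 1$ points the wrong way. The degree-zero limit (Euler characteristic $1$, Proposition \ref{prop:vol:1}) gives no control on the number of lattice points of $Z_x$. Already for $\BG_m$ acting on $\BA^4$ with weights $(1,1,-1,-1)$, with $\phi(x,y)=(x_iy_j)$ onto the quadric cone $Y$, this number equals $1+\min_{i,j}\val(z_{ij})$ at an arc $z=(z_{ij})$ of $Y$, which is unbounded. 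So a divergent arc set lying over a base of finite measure is not a contradiction in itself. Positivity in $\Ko(\Var_k)$ cannot rescue this, and neither can the formal limit $T\to+\infty$, which does not see individual coefficients.

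The missing idea, which is how the paper argues, is that motivic convergence is governed by $\eL$-degree, so the size of the count is harmless once it is a constructible function. By Fubini, $\int_{X(k\llb t\rrb)}|\omega_{\phi^{-1}\Delta}|=\int_{Y(k\llb t\rrb)}I_\phi\,|\omega_\Delta|$ with $I_\phi(y)=[T]\,|R_y|$. Here $R_y$ is the finite set of lattice points of a bounded region in the building of $T$ that indexes the $T(k\llb t\rrb)$-orbits in the fiber. The map $y\mapsto|R_y|$ is constructible by the definable section and bijection of Lemmas \ref{lem:fibsection} and \ref{lem:fibbijSn}, and it has $\deg_\eL\leq 0$. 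Lemma \ref{lem:product-integrable} then transfers integrability from $|\omega_\Delta|$ to $I_\phi|\omega_\Delta|$.

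This argument works directly on untwisted arcs of $X$. The resolution, the twisted sectors and Theorem \ref{mainlim} are therefore unnecessary; you end up using only the $N=1$ term anyway. This matters because your setup contains unjustified steps that this route avoids:
\begin{enumerate}
\item The hypotheses do not give $Y=X/\!\!/T$; they only give a $T$-invariant morphism that is a torsor over a big open.
\item Nothing guarantees an open $\tilde X^s$ of an equivariant resolution with $[\tilde X^s/T]\to Y$ a good moduli space. Without it you lose $\FS$-completeness and Assumption \ref{basas}(3), on which Proposition \ref{prop:vol:1} depends.
\item The remark about using klt of $(X,\phi^{-1}\Delta)$ ``in reverse'' appears to presuppose the conclusion.
\end{enumerate}
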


For $T$-quasi-torsors a similar theorem was proven in \cite{BM}. 

\begin{proof}
The proof uses a characterization of $(Y,\Delta)$ being klt in terms of motivic integration. To do so one considers the motivic measure $|\omega_\Delta|= |\omega_{\Oc(r(K_Y+\Delta))}|$ constructed from the invertible sheaf $\Oc(r(K_Y+\Delta))$ as in Section \ref{fam}, where $r\geq 1$ is such that $r(K_Y+\Delta)$ is Cartier.
Using a log-resolution one sees that $\int_{Y(k\llb{t}\rrb)} |\omega_{\Delta}|$ converges in $\Ko(\Var_K)_{loc}[\BL^{1/r}]$ if and only if $(Y,\Delta)$ is a klt-pair, see \cite[Proposition 6.6.2]{Ya?}.  

The statement is thus equivalent to showing that $\int_{X(k\llb{t}\rrb)} |\omega_{\phi^{-1}\Delta}|$ converges. By Fubini we can write
\[\int_{X(k\llb{t}\rrb)} |\omega_{\phi^{-1}\Delta}| = \int_{Y(k\llb{t}\rrb)} I_\phi |\omega_{\Delta}|, \]
where $I_\phi$ is the function on $Y(k\llb{t}\rrb)$ given by
\[y \mapsto \int_{\phi^{-1}(y)} |\omega_{\phi^{-1}\Delta}|/\phi^*|\omega_{\Delta}|.  \]
But for every $y \in Y(k\llb{t}\rrb) \cap U(k\llp{t}\rrp)$, the fiber $\phi^{-1}$ surjects onto the integer points $R_y$ of a bounded region inside the building of $T$ with fiber $T(k\llb{t}\rrb)$. In particular $R_y$ is a finite set. This implies $I_\phi(y) = [T] |R_y|$. 

Now the map from $y\mapsto |R_y|$ is a constructible motivic function. To see this, consider the map $[X/T]\to Y$. Then $|R_y|$ is the cardinal of the fiber above $y$ of this map. The arguments from Lemmas \ref{lem:fibsection} and \ref{lem:fibbijSn} apply to our situation, giving a definable bijection from $[X/T](K\llb{t}\rrb)$ to a definable subset of $Y\times \BZ^n$ commuting with maps to $Y$. So $|R_y|$ is the cardinal of a finite definable subset of the value group, so clearly a constructible motivic function. 

Now there is a definable function $f$ from $Y$ to $\BZ^r$ and a constructible motivic function $\varphi$ on $\BZ^r$ such that $|R_y|=\varphi(f(y))$. 

Using integration along $f$, there is a constructible motivic function $\psi\in \FC(\BZ^r)$ such that 

\[
\int_{Y(k\llb{t}\rrb)} |\omega_{\Delta}|=\int_{\BZ^r} \psi
\]
and
\[
\int_{Y(k\llb{t}\rrb)} I_\varphi |\omega_{\Delta}|=[T] \int_{\BZ^r} \varphi \psi.
\]

By hypothesis, $\int_{\BZ^r} \psi$ converges. We can write $\psi$ as a linear combination of integrable functions in $\FP(\BZ^r)$, hence we may assume that it is in $\FP(\BZ^r)$ itself. By construction, $\deg_\eL(\varphi(x))\leq 0$ for every $x\in \BZ^r$. Hence by Lemma~\ref{lem:product-integrable} below, $\int_{\BZ^r} \varphi\psi$ converges. Hence $\int_{X(k\llb{t}\rrb)} |\omega_{\phi^{-1}\Delta}|$ converges, which is what we had to prove. 
\end{proof}

Consider the function  $\deg_\eL: \BA\to \BZ\cup\set{-\infty}$, the unique multiplicative extension of the degree in $\eL$ map on  $\BZ[\eL]$. 
\begin{lemma}\label{lem:product-integrable}
Let $\psi\in \FP(\BZ^r)$ be a Presburger constructible function that is integrable. Let $\varphi\in \FP(\BZ^r)$ such that $\deg_\eL(\varphi(x))\leq N$ for some fixed $N$.

Then $\varphi\psi$ is integrable. 
\end{lemma}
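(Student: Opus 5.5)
The plan is to reduce to the explicit structure of Presburger constructible functions and to the Cluckers--Loeser criterion for summability over $\BZ^r$. By the cell decomposition theorem for Presburger arithmetic, after a finite definable partition of $\BZ^r$ (compatible with both $\psi$ and $\varphi$) we may assume we are working on a single Presburger cell $A$. On $A$, $\psi$ is a finite sum of terms $c\, x^{\beta}\eL^{\ell(x)}$ with $c\in\BA$, $\beta\in\BN^r$, $\ell$ a definable (piecewise linear) function. The same holds for $\varphi$. Since $\varphi\psi$ is again in $\FP(\BZ^r)$ and integrability is checked cell by cell, it suffices to treat one cell.

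The key step is the integrability criterion. Recall from \cite{CL-2008} that $\psi\in\FP(\BZ^r)$ is integrable iff, after such a partition and writing $\psi$ in the normal form on each cell with distinct pairs $(\beta,\ell)$ and nonzero coefficients, every term $x^\beta\eL^{\ell(x)}$ is summable on the cell. Equivalently, the real-valued evaluations $\vartheta_q(\psi)$ (obtained by substituting $\eL\mapsto q$ for real $q>1$) are summable for all $q$. I would use the evaluation version: $\psi$ is integrable iff $\sum_{x}|\vartheta_q(\psi)(x)|<\infty$ for all (or sufficiently large) $q>1$. Then the hypothesis $\deg_\eL(\varphi(x))\le N$ translates into a bound $|\vartheta_q(\varphi(x))|\le C_q\, q^{N}\cdot P(|x|)$. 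This is where I must be careful: the coefficients of $\varphi$ involve $x^\beta$ and $\eL^{\ell(x)}$, and the degree bound says the exponent $\ell(x)$ appearing with nonzero coefficient is at most $N$ (up to the contribution of the constant coefficients' degrees). So I need a bound that is polynomial in $x$.

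So the main obstacle is to show that the degree bound forces, on each cell, every term of $\varphi$ in normal form to have $\ell(x)\le N'$ on the cell, so that $\vartheta_q(\varphi)$ grows at most polynomially in $|x|$. I expect to argue that on a cell, distinct linear exponents $\ell_j$ separate generically (their differences are linear, hence nonzero off lower-dimensional subcells), so after refining the partition the term with maximal $\ell_j$ dominates; cancellation of leading terms is impossible once the $\ell_j$ are distinct and coefficients are nonzero polynomials in $x$ (nonvanishing after further refinement). Hence $\max_j \ell_j(x)+\deg_\eL c_j\le N$ on the cell, giving $|\vartheta_q(\varphi(x))|\le C(1+|x|)^{m}q^{N}$.

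Finally I would conclude that $\vartheta_q(\varphi\psi)$ is summable: since $\psi$ integrable means each normal-form term $x^\beta q^{\ell(x)}$ is summable on the cell, $\ell$ must decrease at least linearly along the unbounded directions of the cell (a summable term is in fact geometrically decaying in those directions), and multiplication by a polynomial in $|x|$ preserves summability. Thus each term of $\varphi\psi$ in normal form is summable, so $\varphi\psi$ is integrable by the criterion of \cite{CL-2008}. Summing over the finitely many cells finishes the argument.
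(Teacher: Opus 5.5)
Your proof is correct in outline but takes a genuinely different route from the paper's. The paper's proof is two lines. It quotes \cite[Proposition 4.5.6]{CL-2008}: $\psi\in\FP(\BZ^r)$ is integrable if and only if $\deg_\eL(\psi(x))\to-\infty$ as $\abs{x}\to+\infty$. It then uses that $\deg_\eL$ is multiplicative on $\BA\subset\BQ(\eL)$, so $\deg_\eL(\varphi(x)\psi(x))\le\deg_\eL(\psi(x))+N\to-\infty$.

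You instead work from the definition via the real evaluations $\vartheta_q$. This forces you to prove by hand two facts that the degree criterion packages:
\begin{enumerate}
\item a bound on $\vartheta_q(\varphi)$ of the form polynomial times $q^N$;
\item geometric decay of the normal-form terms of an integrable $\psi$ on cells, so that a polynomial factor is harmless.
\end{enumerate}
Your approach is more explicit about why polynomial factors cannot spoil integrability. It is longer, though, and your termwise summability criterion is essentially the content of the cited proposition. That criterion also needs a normal form in which exponents differing by a constant are merged: $\eL^{x+1}-\eL\cdot\eL^{x}=0$ is integrable, but neither of its two terms is.

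The step you flag as the main obstacle is in fact easy, and your proposed justification for it does not work as written. You cannot in general refine a Presburger partition so that a polynomial coefficient becomes nowhere vanishing on the pieces: the zero set of $x^2-y$ in $\BZ^2$ is not Presburger definable. Also, mere distinctness of the $\ell_j$ does not rule out cancellation between terms whose exponents differ by a small constant.

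No cell analysis of $\varphi$ is needed at all. Write $\varphi(x)=B(x)/\Pi$, with $\Pi$ a fixed product of factors $1-\eL^{-i}$ (so $\deg_\eL\Pi=0$) and $B(x)\in\BZ[\eL,\eL^{-1}]$. Then:
\begin{enumerate}
\item $B(x)$ has a number of monomials bounded independently of $x$;
\item its integer coefficients are bounded by $C(1+\abs{x})^m$, since Presburger functions are piecewise linear;
\item all its exponents are at most $\deg_\eL B(x)=\deg_\eL\varphi(x)\le N$.
\end{enumerate}
Hence $\abs{\vartheta_q(\varphi(x))}\le C_q(1+\abs{x})^m q^N$ for every $x$ and every $q>1$. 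That is all your final paragraph actually uses.
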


\begin{proof}
From \cite[Proposition 4.5.6]{CL-2008}, $\psi$ is integrable if and only if $\lim_{\abs{x}\to +\infty}{\deg_\eL}(\psi(x))= -\infty$. 

We have
\[
\deg_\eL(\varphi(x)\psi(x))=\deg_\eL(\varphi(x))+\deg_\eL(\psi(x))\leq \deg_\eL(\psi(x))+N.
\]
So that $\lim_{\abs{x}\to +\infty}{\deg_\eL}(\varphi(x)\psi(x))= -\infty$, hence $\varphi \psi$ is integrable by \cite[Proposition 4.5.6]{CL-2008}. 
\end{proof}

It is a natural question, whether Theorem \ref{bmthm} extends to more general reductive groups $G$ \cite[Question 7.8]{BM}. From our point of view this amounts to understanding better  the set of integer points $R_y$ inside the building of $G$ as $y$ varies, which seems to be a difficult but interesting question.

\bibliographystyle{abbrv}
\bibliography{artin_stacks_bibli}
\end{document}